\newcommand\C{{\mathbb C}}
\newcommand\D{{\mathbb D}}
\newcommand\cS{{\mathcal S}}
\newcommand\cK{{\mathcal K}}
\newcommand\N{{\mathbb N}}
\newcommand\R{{\mathbb R}}
\newcommand\Z{{\mathbb Z}}
\newcommand\spec{{\rm spec}\,}  
\newcommand\specn{{\rm spec}}   
\newcommand\ri{{\rm i}}
\newcommand\re{{\rm e}}
\newcommand\Ome{\Omega}
\newcommand\intt{{\rm int}}
\newcommand\per{\rm per}
\newcommand\ovD{{\overline{\mathbb D}}}
 \newtheorem{thm}{Theorem}[section]
 \newtheorem{cor}[thm]{Corollary}
 \newtheorem{lem}[thm]{Lemma}
 \newtheorem{prop}[thm]{Proposition}
 \theoremstyle{definition}
 \theoremstyle{remark}
 \newtheorem{rem}[thm]{Remark}
 \numberwithin{equation}{section}
\begin{document}
%
%
%
%
%
%
%
%
%
\title[Feinberg-Zee Random Hopping Matrix]
 {On Symmetries of the Feinberg-Zee Random Hopping Matrix}
\author[Simon Chandler-Wilde]{Simon N.~Chandler-Wilde}

\address{%
Department of Mathematics and Statistics\\
University of Reading\\
Reading, RG6 6AX\\
UK}
\email{S.N.Chandler-Wilde@reading.ac.uk}

\author{Raffael Hagger}
\address{
Institute of Mathematics\\
Hamburg University of Technology\\
Schwarzenbergstr. 95 E\\
21073 Hamburg\\
GERMANY\\
\textit{now at:}\\
Institute for Analysis\\
Leibniz Universit\"at Hannover\\
Welfengarten 1\\
30167 Hannover\\
GERMANY}
\email{raffael.hagger@math.uni-hannover.de}
\subjclass{Primary 47B80; Secondary 37F10, 47A10, 47B36, 65F15}

\keywords{random operator, Jacobi operator, non-selfadjoint operator, spectral theory, fractal, Julia set}

\date{August 27, 2015}
\dedicatory{Dedicated to Roland Duduchava on the occasion of his 70th birthday}

\begin{abstract}
In this paper we study the spectrum $\Sigma$ of the infinite Feinberg-Zee random hopping matrix, a tridiagonal matrix with zeros on the main diagonal and random $\pm 1$'s on the first sub- and super-diagonals; the study of this non-selfadjoint random matrix was initiated in Feinberg and Zee ({\it Phys. Rev. E} {\bf 59} (1999), 6433--6443). Recently Hagger ({\em Random Matrices: Theory Appl.}, {\bf 4} 1550016 (2015)) has shown that the so-called {\em periodic part} $\Sigma_\pi$ of $\Sigma$, conjectured to be the whole of $\Sigma$ and known to include the unit disk, satisfies $p^{-1}(\Sigma_\pi) \subset \Sigma_\pi$ for an infinite class $\cS$ of monic polynomials $p$. In this paper we make very explicit the membership of $\cS$, in particular showing that it includes $P_m(\lambda) = \lambda U_{m-1}(\lambda/2)$, for $m\geq 2$, where $U_n(x)$ is the Chebychev polynomial of the second kind of degree $n$. We also explore implications of these inverse polynomial mappings, for example showing that $\Sigma_\pi$ is the closure of its interior, and contains the filled Julia sets of infinitely many $p\in \cS$, including those of $P_m$, this partially answering a conjecture of the second author.
\end{abstract}

\maketitle
\section{Introduction} \label{sec:intro}

In this paper we study spectral properties of infinite matrices of the form
\begin{equation} \label{eq:matrix}
A_c = \begin{pmatrix} \ddots & \ddots & & & \\ \ddots & 0 & 1 & & \\ & c_0 & \fbox{$0$} & 1 & \\ & & c_1 & 0 & \ddots \\ & & & \ddots & \ddots \end{pmatrix},
\end{equation}
where $c\in \Omega := \{\pm 1\}^\Z$ is an infinite sequence of $\pm 1$'s, and the box marks the entry at $(0,0)$. Let $\ell^2$ denote the linear space of those complex-valued sequences $\phi:\Z\to \C$ for which $\|\phi\|_2 := \{\sum_{n\in\Z}|\phi_n|^2\}^{1/2}<\infty$, a Hilbert space equipped with the norm $\|\cdot\|_2$. Then to each matrix $A_c$ with $c\in \Ome$ corresponds a bounded linear mapping $\ell^2\mapsto\ell^2$, which we denote again by $A_c$, given  by the rule
$$
(A_c \phi)_m = c_m \phi_{m-1} + \phi_{m+1}, \quad m\in\Z,
$$
for $\phi\in\ell^2$.

Following \cite{CWDavies2011} we will term \eqref{eq:matrix} a {\em Feinberg-Zee hopping matrix}. Further,  in the case where each $c_m$ is an independent realisation of a random variable with probability measure whose support is $\{-1,1\}$, we will term $A_c$ a Feinberg-Zee {\em random} hopping matrix, this particular non-selfadjoint random matrix studied previously in \cite{FeinZee97,FeinZee99,CicutaContediniMolinari2000,HolzOrlZee,CCL,CWDavies2011,CCL2,Hagger:NumRange,Hagger:dense,Hagger:symmetries}. \footnote{These random hopping matrices appear to have been studied initially in \cite{FeinZee97}, in which paper the first superdiagonal is also a sequence of random $\pm1$'s. But it is no loss of generality to restrict attention to matrices of the form \eqref{eq:matrix} as the case where the superdiagonal is also random can be reduced to \eqref{eq:matrix} by a simple gauge transformation; see \cite{FeinZee97} or \cite[Lemma 3.2, Theorem 5.1]{CCL2}.} The spectrum of a realisation $A_c$ of this random hopping matrix is given, almost surely, by (e.g., \cite{CCL})
\begin{equation} \label{eq:spec}
\spec A_c = \Sigma := \bigcup_{b\in \Ome} \spec A_b.
\end{equation}
Here $\spec A_b$ denotes the spectrum of $A_b$ as an operator on $\ell^2$. Note that \eqref{eq:spec} implies that $\Sigma$ is closed.

 Equation \eqref{eq:spec} holds whenever $c\in \Ome$ is {\em pseudo-ergodic}, which means simply that every finite sequence of $\pm 1$'s appears as a consecutive sequence somewhere in the infinite vector $c$;  it is easy to see that $c$ is pseudo-ergodic almost surely if $c$ is random.  The concept of pseudo-ergodicity dates back to \cite{Davies2001:PseudoErg}, as do the arguments that \eqref{eq:spec} holds, or see \cite{CCL2} for \eqref{eq:spec} derived as a special case of more general limit operator results.

Many of the above cited papers are concerned primarily with computing upper and lower bounds on $\Sigma$. A standard upper bound for the spectrum is provided by the numerical range. It is shown in \cite{CCL2} that, if $c\in \Ome$ is pseudo-ergodic, its numerical range $W(A_c)$, defined by $W(A_c) :=\{(A_c \phi,\phi):\phi\in \ell^2,\ \|\phi\|_2=1\}$, where $(\cdot,\cdot)$ is the inner product on $\ell^2$, is given by
\begin{eqnarray} \label{eq:nr}
W(A_c) = \Delta:= \{x+\ri y: x,y\in \R, \; |x|+|y| < 2\}.
\end{eqnarray}
 This gives the upper bound that $\Sigma \subset \overline{\Delta}$, the closure of $\Delta$. Other, sharper upper bounds on $\Sigma$ are discussed in Section \ref{sec:pre} below.

This current paper is related to the problem of computing lower bounds for $\Sigma$ via \eqref{eq:spec}. If $b\in \Ome$ is constant then $A_b$ is a Laurent matrix and $\spec A_b = [-2,2]$ if $b_m\equiv 1$, while $\spec A_b=\ri[-2,2]$ if $b_m\equiv -1$; thus, by \eqref{eq:spec}, $\pi_1:= [-2,2]\cup \ri [-2,2]\subset \Sigma$. Generalising this, if $b\in \Ome$ is periodic with period $n$ then $\spec A_b$ is the union of a finite number of analytic arcs which can be computed by calculating eigenvalues of $n\times n$ matrices (see Lemma \ref{lem:spper} below). And, by \eqref{eq:spec}, $\pi_n\subset \Sigma$, where $\pi_n$ is the union of $\spec A_b$ over all $b$ with period $n$. This implies, since $\Sigma$ is closed, that
\begin{equation} \label{eq:piinf}
\Sigma_\pi := \overline{\pi_\infty} \subset \Sigma,
\end{equation}
where $\pi_\infty := \cup_{n\in\N} \pi_n$.

We will call $\Sigma_\pi$ the {\em periodic part} of $\Sigma$, noting that \cite{CCL} conjectures that equality holds in \eqref{eq:piinf}, i.e.~that $\pi_\infty$ is dense in $\Sigma$ and $\Sigma_\pi=\Sigma$. Whether or not this holds is an open problem, but it has been shown in \cite{CWDavies2011} that $\pi_\infty$ is dense in the open unit disk $\D:=\{\lambda\in \C:|\lambda|<1\}$, so that
\begin{equation} \label{eq:unit}
\ovD \subset \Sigma_\pi\subset \Sigma.
\end{equation}

For a polynomial $p$ and $S\subset \C$, we define, as usual, $p(S):= \{p(\lambda):\lambda\in S\}$ and $p^{-1}(S):= \{\lambda\in \C: p(\lambda)\in S\}$. (We will use throughout that if $S$ is open then $p^{-1}(S)$ is open ($p$ is continuous) and, if $p$ is non-constant, then $p(S)$ is also open, e.g., \cite[Theorem 10.32]{rudinrealcomplex}.) The proof of \eqref{eq:unit} in \cite{CWDavies2011} depends on the result, in the case $p(\lambda) = \lambda^2$, that
\begin{equation} \label{eq:pm1}
p^{-1}(\pi_\infty) \subset \pi_\infty, \; \mbox{ so that also } p^{-1}(\Sigma_\pi)\subset \Sigma_\pi.
\end{equation}
This implies that $S_n\subset \pi_\infty$, for $n=0,1,...$, where $S_0:=[-2,2]$ and $S_{n}:=p^{-1}(S_{n-1})$, for $n\in\N$. Thus $\cup_{n\in\N}S_n$, which is dense in $\ovD$, is also in $\pi_\infty$, giving \eqref{eq:unit}.

Hagger \cite{Hagger:symmetries} makes a large generalisation of the results of \cite{CWDavies2011}, showing the existence of an infinite family, $\cS$, containing monic polynomials of arbitrarily high degree, for which \eqref{eq:pm1} holds. For each of these polynomials $p$ let
\begin{equation} \label{eq:Up}
U(p) := \bigcup_{n=1}^\infty p^{-n}(\D).
\end{equation}
(Here $p^{-2}(S) := p^{-1}(p^{-1}(S))$, $p^{-3}(S) := p^{-1}(p^{-2}(S))$, etc.) Hagger \cite{Hagger:symmetries} observes that, as a consequence of \eqref{eq:unit} and \eqref{eq:pm1}, $U(p)\subset \Sigma_\pi$. He also notes that standard results of complex dynamics (e.g., \cite[Corollary 14.8]{Falconer03}) imply that $J(p)\subset \overline{U(p)}$, so that $J(p)\subset \Sigma_\pi$; here $J(p)$ denotes the Julia set of the polynomial $p$. (Where $p^2(\lambda):=p(p(\lambda))$, $p^3(\lambda):=p(p^2(\lambda))$, etc., we recall \cite{Falconer03} that the {\em filled Julia set} $K(p)$ of a polynomial $p$ of degree $\geq 2$ is the compact set of those $\lambda\in \C$ for which the sequence $(p^n(\lambda))_{n\in\N}$, the {\em orbit} of $\lambda$, is bounded. Further, the boundary of $K(p)$, $J(p) := \partial K(p)\subset K(p)$,  is the {\em Julia set} of $p$.)

The definition of the set $\cS$ in \cite{Hagger:symmetries}, while constructive, is rather indirect. The first contribution of this paper (Section \ref{sec:poly}) is to make explicit the membership of $\cS$. As a consequence we show, in particular, that $P_m\in \cS$, for $m=2,3,...$, where $P_m(\lambda) := \lambda U_{m-1}(\lambda/2)$, and $U_n$ is the Chebychev polynomial of the second kind of degree $n$ \cite{AS}.

The second contribution of this paper (Section \ref{sec:interior}) is to say more about the interior points of $\Sigma_\pi$. Previous calculations of large subsets of $\pi_\infty$, precisely calculations of $\pi_n$
for $n$ as large as 30 \cite{CCL,CCL2}, suggest that $\Sigma_\pi$ fills most of the square $\Delta$, but $\intt(\Sigma_\pi)$, the interior of $\Sigma_\pi$, is known only to contain $\D$. Using that the whole family $\{P_m:m\geq 2\}\subset \cS$, we prove that $(-2,2)\subset  \intt(\Sigma_\pi)$. This result is then used to show that 
$\Sigma_\pi$ is the closure of its interior. Using that $p^{-1}(\D)\subset \Sigma_\pi$, for $p\in \cS$, we also, in Section \ref{subsec:b}, construct new explicit subsets of $\Sigma_\pi$ and its interior; in particular, extending \eqref{eq:unit}, we show that $\alpha \ovD \subset \Sigma_\pi$ for $\alpha = 1.1$.

In the final Section \ref{sec:julia} of the paper we address a conjecture of Hagger \cite{Hagger:symmetries} that, not only for every $p\in \cS$ is $J(p)\subset \overline{U(p)}$ (which implies $J(p)\subset \Sigma_\pi$), but also the filled Julia set $K(p)\subset \overline{U(p)}$. This is a stronger result as, while the compact set $J(p)$ has empty interior \cite[Summary 14.12]{Falconer03}, $K(p)$ contains, in addition to $J(p)$,  all the bounded components of the open Fatou set $F(p):= \C\setminus J(p)$. We show, by a counter-example, that this conjecture is false. But, positively, we conjecture that $K(p)\subset \Sigma_\pi$ for all $p\in \cS$, and we prove that this is true for a large subset of $\cS$, in particular that $K(P_m)\subset \Sigma_\pi$ for $m\geq 2$.

The results in this paper provide new information on the almost sure spectrum $\Sigma\supset \Sigma_\pi$ of the bi-infinite Feinberg-Zee random hopping matrix. They are also relevant to the study of the spectra of the corresponding finite matrices. For $n\in \N$ let $V_n$ denote the set of $n\times n$ matrices of the form \eqref{eq:matrix}, so that $V_1 := \{(0)\}$ and, for $n\geq 2$, $V_n:= \{A_k^{(n)}:k=(k_1,...,k_n)\in \{\pm 1\}^n\}$, where
\begin{equation} \label{eq:matrix2}
A^{(n)}_k:= \left(\begin{array}{ccccc}
0&1&&\\
k_1&0&\ddots&\\
&\ddots&\ddots& 1\\
&& k_{n-1} & 0
\end{array}\right).
\end{equation}
(This notation will be convenient, but note that $A_k^{(n)}$ is independent of the last component of $k$.) Then $\spec A_k^{(n)}$ is the set of eigenvalues of the matrix $A_k^{(n)}$. Let
\begin{equation} \label{eq:sigma}
\sigma_n := \bigcup_{A\in V_n} \spec A, \mbox{ for }n\in\N, \; \mbox{ and } \sigma_\infty := \bigcup_{n=1}^\infty \sigma_n,
\end{equation}
so that $\sigma_\infty$ is the union of all eigenvalues of finite matrices of the form \eqref{eq:matrix2}. Then, connecting spectra of finite and infinite matrices, it has been shown in \cite{CCL2} that $\sigma_n\subset \pi_{2n+2}$, for $n\in\N$, so that $\sigma_\infty\subset \pi_\infty \subset \Sigma_\pi$. Further, \cite{Hagger:dense} shows that $\sigma_\infty$ is dense in $\pi_\infty$, so that $\Sigma_\pi = \overline{\sigma_\infty}$. In Section \ref{subsec:a} we build on and extend these results, making a surprising connection between the eigenvalues of the finite matrices \eqref{eq:matrix2} and the spectra of the periodic operators associated to the polynomials in $\cS$. The result we prove (Theorem \ref{thm:dense}), is key to the later arguments in Section \ref{sec:interior}.

\section{Preliminaries and previous work} \label{sec:pre}

\paragraph{Notions of set convergence.} We will say something below about set sequences, sequences approximating $\Sigma$ and $\Sigma_\pi$ from above and below, respectively. We will measure convergence in the standard Haussdorf metric $d(\cdot,\cdot)$ \cite[Section 28]{Hausdorff} (or see \cite{HaRoSi2}) on the space $\C^C$ of compact subsets of $\C$. We will write, for a sequence $(S_n)\subset \C^C$ and $S\in \C^C$, that $S_n\searrow S$ if $d(S_n,S)\to 0$ as $n\to \infty$ and $S\subset S_n$ for each $n$; equivalently, $S_n\searrow S$ as $n\to\infty$ if $S\subset S_n$ for each $n$ and, for every $\epsilon >0$, $S_n\subset S+\epsilon \D$, for all sufficiently large $n$. Similarly, we will write that $S_n\nearrow S$ if $d(S_n,S)\to 0$ as $n\to \infty$ and $S_n\subset S$ for each $n$; equivalently, $S_n\nearrow S$ if $S_n\subset S$ for each $n$ and, for every $\epsilon >0$, $S\subset S_n+\epsilon \D$, for all sufficiently large $n$. The following observation, which follows immediately from \cite[Proposition 3.6]{HaRoSi2} (or see \cite[Lemma 4.15]{CWLi2015:Coburn}),
will be useful:
\begin{lem} \label{eq:setconv}
If $S_1\subset S_2 \subset ... \subset \C$ are closed and
$
S_\infty:= \bigcup_{n=1}^\infty S_n
$
is bounded, then $S_n\nearrow \overline{S_\infty}$, as $n\to\infty$.
\end{lem}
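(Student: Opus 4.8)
The plan is to reduce the claim to a single one-sided Hausdorff estimate and then close the gap by a routine compactness argument. First I would observe that, since each $S_n$ is contained in the bounded set $S_\infty$ and is closed, every $S_n$ is compact, and likewise $\overline{S_\infty}$ is compact; hence all the sets in question are genuine elements of $\C^C$ and the Hausdorff distances make sense. The inclusion $S_n\subset S_\infty\subset\overline{S_\infty}$ holds trivially for each $n$, so, by the second characterisation of ``$\nearrow$'' recalled just before the lemma, it remains only to show that for every $\epsilon>0$ there is $N$ such that $\overline{S_\infty}\subset S_n+\epsilon\D$ for all $n\geq N$.

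Suppose this fails for some $\epsilon>0$. Then there are indices $n_1<n_2<\cdots$ and points $z_k\in\overline{S_\infty}$ with $\dist(z_k,S_{n_k})\geq\epsilon$. By compactness of $\overline{S_\infty}$ we may pass to a subsequence and assume $z_k\to z$ for some $z\in\overline{S_\infty}$. Since $S_\infty$ is dense in $\overline{S_\infty}$, pick $w\in S_\infty$ with $|z-w|<\epsilon/3$, and since $S_\infty=\bigcup_m S_m$ there is $M$ with $w\in S_M$. For $k$ large enough we have both $n_k\geq M$, so that $w\in S_M\subset S_{n_k}$ by monotonicity, and $|z_k-z|<\epsilon/3$; then $\dist(z_k,S_{n_k})\leq|z_k-w|\leq|z_k-z|+|z-w|<2\epsilon/3$, contradicting $\dist(z_k,S_{n_k})\geq\epsilon$. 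This establishes the required inclusion and hence $S_n\nearrow\overline{S_\infty}$.

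There is no real obstacle here: the only point needing a little care is that the $S_n$ exhaust $S_\infty$ but not necessarily its closure, so one cannot directly place a limit point $z\in\overline{S_\infty}$ into some $S_M$; the two-step approximation (first approximate $z$ by a point $w$ of $S_\infty$, then locate $w$ in a specific $S_M$) handles this. The statement is really just the monotone-convergence property of the Hausdorff metric restricted to an increasing chain with bounded union, and it may alternatively be quoted directly from the cited references \cite{HaRoSi2,CWLi2015:Coburn}.
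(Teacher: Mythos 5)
Your argument is correct: the reduction to the one-sided inclusion $\overline{S_\infty}\subset S_n+\epsilon\D$ for large $n$, followed by the compactness/contradiction argument with the two-step approximation (first $z\in\overline{S_\infty}$ by $w\in S_\infty$, then $w\in S_M\subset S_{n_k}$), is sound, and you correctly identify the only delicate point, namely that a limit point of $S_\infty$ need not lie in any single $S_M$. The paper itself gives no proof of this lemma --- it simply quotes it from \cite{HaRoSi2} and \cite{CWLi2015:Coburn} --- so your self-contained elementary argument is a perfectly acceptable substitute for the citation.
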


\paragraph{Spectra of periodic operators.} We will need explicit formulae for the spectra of operators $A_c$ with $c\in \Ome$ in the case when $c$ is periodic. For $k=(k_1,...,k_n)\in \{\pm 1\}^n$, let $A_k^{\per}$ denote $A_c$ in the case that $c_{m+n} = c_n$, for $m\in \Z$, and $c_m=k_m$, for $m=1,2,...,n$. For $n\in \N$ let $I_n$ denote the order $n$ identity matrix, $R_n$ the $n\times n$ matrix which is zero except for the entry 1 in row $n$, column $1$, and let $R_n^T$ denote the transpose of $R_n$. For $n\in \N$, $k\in \{\pm 1\}^n$, and $\varphi\in \R$, let $a_k(\varphi) := A_k^{(n)} + \re^{-\ri \varphi} R_n + k_n \re^{\ri \varphi} R_n^T$. The following characterisation of the spectra of periodic operators is well-known (see Lemma 1 and the discussion in \cite{Hagger:dense}).

\begin{lem} \label{lem:spper}
For $n\in \N$ and $k\in \{\pm 1\}^n$,
$$
\spec A_k^{\per} = \{\lambda\in \C: \det(a_k(\varphi)-\lambda I_n) =0 \mbox{ for some }\varphi\in [0,2\pi)\}.
$$
\end{lem}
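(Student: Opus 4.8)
The plan is to diagonalise the periodic operator $A_k^{\per}$ by Fourier transform in the ``cell'' variable, reducing the spectral problem to a family of finite-dimensional eigenvalue problems parametrised by a point on the circle. Concretely, I would identify $\ell^2(\Z)$ with $\ell^2(\Z; \C^n)$ by grouping entries into blocks of length $n$: writing $\phi = (\ldots, \phi^{(-1)}, \phi^{(0)}, \phi^{(1)}, \ldots)$ with each $\phi^{(j)} \in \C^n$, the $n$-periodicity of $c$ means that $A_k^{\per}$ acts as a block tridiagonal (block Laurent) operator $B$ on $\ell^2(\Z;\C^n)$, whose diagonal block is $A_k^{(n)}$, whose super-diagonal block is $R_n$ (the single $1$ coupling the last site of one cell to the first site of the next), and whose sub-diagonal block is $k_n R_n^T$.

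The main step is then the standard fact that a block Laurent operator with matrix symbol $a(\varphi) := A_k^{(n)} + \re^{-\ri\varphi} R_n + k_n \re^{\ri\varphi} R_n^T$ is unitarily equivalent, via the discrete Fourier transform $\mathcal F : \ell^2(\Z;\C^n) \to L^2([0,2\pi); \C^n)$, to the operator of multiplication by the matrix-valued function $\varphi \mapsto a_k(\varphi)$. I would verify this by a direct computation: applying $\mathcal F$ conjugates the shift in the cell index to multiplication by $\re^{\pm\ri\varphi}$, so $B$ becomes multiplication by $a_k(\varphi)$. The spectrum of a multiplication operator by a continuous matrix-valued function $a_k(\cdot)$ on a compact set is well known to be $\bigcup_{\varphi} \spec a_k(\varphi)$, and since $\spec a_k(\varphi) = \{\lambda : \det(a_k(\varphi) - \lambda I_n) = 0\}$, this yields exactly the claimed formula. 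One should note the edge case $n = 1$, where $R_1 = R_1^T = I_1 = (1)$ and $A_k^{(1)} = (0)$, so $a_k(\varphi) = \re^{-\ri\varphi} + k_1 \re^{\ri\varphi}$ and the formula recovers $\spec A_k^{\per} = \{\re^{-\ri\varphi} + k_1\re^{\ri\varphi} : \varphi \in [0,2\pi)\}$, i.e.\ $[-2,2]$ or $\ri[-2,2]$ as expected.

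I would present this mostly as a pointer to the literature, since, as the excerpt already acknowledges, the result is well known (Floquet--Bloch theory for periodic Jacobi-type operators; see the reference to \cite{Hagger:dense}), so a short self-contained sketch along the above lines suffices. The only genuinely delicate point — and hence what I expect to be the main obstacle to a fully rigorous write-up — is justifying the spectral identity for the multiplication operator: one must check that $\lambda \notin \bigcup_\varphi \spec a_k(\varphi)$ implies $(a_k(\cdot) - \lambda I_n)^{-1}$ is a bounded (hence continuous, by compactness of $[0,2\pi)$ and continuity of matrix inversion) multiplier, giving a bounded inverse of $B - \lambda I$, while conversely $\lambda \in \spec a_k(\varphi_0)$ for some $\varphi_0$ produces approximate eigenfunctions of $B$ concentrated near $\varphi_0$ in Fourier space. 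Both directions are routine but deserve a sentence each.
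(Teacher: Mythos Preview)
Your approach is correct and is exactly the standard Floquet--Bloch (direct integral) argument that underlies this well-known result. Note that the paper does not actually give a proof of this lemma: it simply states the result as well known and refers the reader to Lemma~1 and the surrounding discussion in \cite{Hagger:dense}. Your sketch is therefore \emph{more} detailed than what the paper provides, and is in line with what one finds in the cited reference: block the sequence into cells of length $n$ (with the indexing chosen so that the diagonal block is $A_k^{(n)}$, the off-diagonal couplings are $R_n$ and $k_n R_n^T$), conjugate by the discrete Fourier transform in the cell index to obtain multiplication by $\varphi\mapsto a_k(\varphi)$ on $L^2([0,2\pi);\C^n)$, and read off the spectrum of the multiplication operator as $\bigcup_\varphi \spec a_k(\varphi)$. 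Your remarks on the two directions of the spectral identity for the multiplication operator, and your $n=1$ sanity check, are accurate.
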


Key to our arguments will be an explicit expansion for the determinant in the above lemma, expressed in terms of the following notation.
For $n\in \N$, $k=(k_1,...,k_n)\in \{\pm 1\}^n$, and $\lambda\in \C$, let
\begin{equation}\label{eq:pdef3}
q_k(\lambda) :=          \left|\begin{array}{cccc}
                                      \lambda & 1 \\
                                       k_1 & \ddots & \ddots\\
                                       & \ddots & \ddots & 1\\
                                       & & k_{n} & \lambda
                                     \end{array}\right|.
\end{equation}
For $i,j\in \Z$ and $\lambda\in \C$, let $k(i:j):= (k_i,...,k_j)$, for $1\leq i\leq j\leq n$, and define
\begin{equation} \label{eq:ij}
q_{k(i:j)}(\lambda):= \left\{\begin{array}{cc}
                      \lambda, & \mbox{ if } i-j = 1, \\
                      1, & \mbox{ if } i-j = 2, \\
                      0, & \mbox{ if } i-j = 3.
                    \end{array}\right.
\end{equation}
 Then,  for $n\in \N$ and $k\in \{\pm 1\}^n$, expanding the determinant \eqref{eq:pdef3} by Laplace's rule by the first row and by the last row, we see that
\begin{equation} \label{eq:qk}
q_k(\lambda) =\lambda q_{k(2:n)}(\lambda)-k_1 q_{k(3:n)}(\lambda) = \lambda q_{k(1:n-1)}(\lambda)-k_{n} q_{k(1:n-2)}(\lambda).
\end{equation}
The following lemma follows easily by induction on $n$, using \eqref{eq:qk}. The bounds on $q_k$ stated are used later in Corollary \ref{cor:pkbound}.
\begin{lem} \label{lem:qkbound}
If $k=(k_1,...k_n)\in \{\pm 1\}^n$, for some $n\in \N$, then $q_k$ is a monic polynomial of degree $n+1$, and $q_k$ is even and $q_k(0)=\pm 1$ if $n$ is odd, $q_k$ is odd and $q_k(0)=0$ if $n$ is even. Further, $|q_k(\lambda)| \geq |q_{k(1:n-1)}(\lambda)|+1$, $|q_k(\lambda)| \geq |q_{k(2:n)}(\lambda)|+1$, and $|q_k(\lambda)|\geq n+2$, for
$|\lambda|\geq 2$.
\end{lem}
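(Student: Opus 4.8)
The plan is to prove all three families of bounds by a single induction on $n$, using the two recurrences in \eqref{eq:qk} together with the degenerate cases \eqref{eq:ij}. First I would handle the base cases: for $n=1$, $k=(k_1)$, one has $q_k(\lambda) = \lambda^2 - k_1$ directly from \eqref{eq:pdef3}, which is monic of degree $2$, even, with $q_k(0) = -k_1 = \pm 1$; for $n=2$, $k=(k_1,k_2)$, expanding gives $q_k(\lambda) = \lambda^3 - (k_1+k_2)\lambda$, monic of degree $3$, odd, with $q_k(0)=0$. The structural claims (monic of degree $n+1$; parity and value at $0$ determined by the parity of $n$) then propagate through either recurrence in \eqref{eq:qk}: if $q_{k(2:n)}$ has degree $n$ and $q_{k(3:n)}$ has degree $n-1$, then $\lambda q_{k(2:n)}(\lambda)$ has degree $n+1$ with leading coefficient $1$ and dominates $k_1 q_{k(3:n)}(\lambda)$; the parity flips each time we multiply by $\lambda$ and subtract a polynomial two degrees lower (hence of the same parity as $q_{k(3:n)}$, which by induction has the opposite parity to $q_{k(2:n)}$), and evaluating at $\lambda = 0$ kills the $\lambda q_{k(2:n)}(0)$ term, leaving $q_k(0) = -k_1 q_{k(3:n)}(0)$, which is $\mp 1$ when $n-2$ is even, i.e. when $n$ is even — wait, one must be careful with the index bookkeeping here, but the point is that $q_{k(3:n)}$ is a $q$-polynomial for a string of length $n-2$, so its value at $0$ is $\pm 1$ exactly when $n-2$ is odd, i.e. $n$ odd, matching the claim for $q_k$ with $n$... this needs the degenerate conventions \eqref{eq:ij} invoked when $n \in \{1,2\}$ so the recurrence is uniformly valid, which is precisely why those conventions were set up.

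Next I would prove the inequality $|q_k(\lambda)| \geq |q_{k(1:n-1)}(\lambda)| + 1$ for $|\lambda| \geq 2$, again by induction on $n$ using the \emph{second} recurrence in \eqref{eq:qk}, namely $q_k(\lambda) = \lambda q_{k(1:n-1)}(\lambda) - k_n q_{k(1:n-2)}(\lambda)$. By the reverse triangle inequality, for $|\lambda| \geq 2$,
\begin{equation*}
|q_k(\lambda)| \geq |\lambda|\,|q_{k(1:n-1)}(\lambda)| - |q_{k(1:n-2)}(\lambda)| \geq 2|q_{k(1:n-1)}(\lambda)| - |q_{k(1:n-2)}(\lambda)|.
\end{equation*}
The inductive hypothesis applied to the string $k(1:n-1)$ of length $n-1$ gives $|q_{k(1:n-1)}(\lambda)| \geq |q_{k(1:n-2)}(\lambda)| + 1$, so $2|q_{k(1:n-1)}(\lambda)| - |q_{k(1:n-2)}(\lambda)| \geq |q_{k(1:n-1)}(\lambda)| + (|q_{k(1:n-1)}(\lambda)| - |q_{k(1:n-2)}(\lambda)|) \geq |q_{k(1:n-1)}(\lambda)| + 1$, as required. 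The base cases $n=1,2$ are checked against \eqref{eq:ij}: e.g. for $n=1$ one needs $|\lambda^2 - k_1| \geq |\lambda| + 1$ for $|\lambda| \geq 2$, which holds since $|\lambda^2 - k_1| \geq |\lambda|^2 - 1 \geq |\lambda| + 1$ when $|\lambda|^2 - |\lambda| - 2 = (|\lambda|-2)(|\lambda|+1) \geq 0$. The bound $|q_k(\lambda)| \geq |q_{k(2:n)}(\lambda)| + 1$ follows by the symmetric argument using the first recurrence in \eqref{eq:qk}, or simply by noting the transformation $k \mapsto (k_n, k_{n-1}, \dots, k_1)$ reverses the roles (since the determinant \eqref{eq:pdef3} is invariant under reversing the order of rows and columns simultaneously up to the placement of the $k_i$'s — this symmetry should be stated carefully or the second recurrence used directly to avoid ambiguity).

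Finally, the estimate $|q_k(\lambda)| \geq n+2$ for $|\lambda| \geq 2$ is an immediate consequence of the chain just established: iterating $|q_k(\lambda)| \geq |q_{k(1:n-1)}(\lambda)| + 1 \geq |q_{k(1:n-2)}(\lambda)| + 2 \geq \cdots \geq |q_{k(1:1)}(\lambda)| + (n-1) = |\lambda^2 - k_1| + (n-1) \geq (|\lambda|^2 - 1) + (n-1) \geq 3 + n - 1 = n+2$ for $|\lambda| \geq 2$. I expect the main obstacle to be purely bookkeeping rather than conceptual: one must keep the length conventions \eqref{eq:ij} consistent so that the recurrences \eqref{eq:qk} apply uniformly at the low end of the induction, and one must state the reversal symmetry of $q_k$ precisely enough to deduce the $q_{k(2:n)}$ bound from the $q_{k(1:n-1)}$ bound (or else run the parallel induction explicitly). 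None of the individual verifications is hard, so the write-up can be kept brief, flagging the base cases and then giving the single inductive step for each inequality.
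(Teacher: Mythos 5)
Your proposal is correct and follows exactly the route the paper intends: the paper's ``proof'' is the single remark that the lemma ``follows easily by induction on $n$, using \eqref{eq:qk}'', and your write-up is a correct fleshing-out of that induction (base cases via \eqref{eq:ij}, structural claims and both inequalities via the two recurrences, and the bound $|q_k(\lambda)|\geq n+2$ by chaining). The only blemish is the momentary index slip in the $q_k(0)$ computation, which you correctly fix in the same sentence; the final conclusions all stand.
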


For $n\in \N$ let $J_n$ denote the $n\times n$ flip matrix, that is the $n\times n$ matrix with entry $\delta_{i,n+1-j}$ in row $i$, column $j$, where $\delta_{i,j}$ is the Kronecker delta. Then $J_n^2 = I_n$ so that $(\det J_n)^2 = 1$.
For $k=(k_1,...,k_n)\in \{\pm 1\}^n$, let $k^\prime  := k J_n = (k_n,...,k_1)$. The first part of the following lemma is essentially  a particular instance of a general property of determinants.
\begin{lem} \label{lem:qksym}
If $k\in \{\pm 1\}^n$, for some $n\in \N$, and $\ell=k^\prime$, then $q_k= q_\ell$; if $\ell=-k$, then
\begin{equation} \label{eq:qel}
q_\ell(\lambda) = \ri^{-n-1} q_k(\ri \lambda).
\end{equation}
\end{lem}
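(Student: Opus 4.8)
The plan is to work directly with the $(n+1)\times(n+1)$ tridiagonal matrix
$$
M_k(\mu) := \begin{pmatrix}
\mu & 1 & & \\
k_1 & \mu & \ddots & \\
& \ddots & \ddots & 1 \\
& & k_n & \mu
\end{pmatrix},
$$
for which $q_k(\lambda)=\det(M_k(\lambda))$ by \eqref{eq:pdef3}, and to exhibit in each case an explicit transformation carrying $M_k(\lambda)$ to $M_\ell$ (evaluated at $\lambda$ or $\ri\lambda$) whose effect on the determinant is transparent.

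For the first part I would use that $\det$ is invariant under transposition and under conjugation by the flip matrix $J_{n+1}$, with $J_{n+1}^2=I_{n+1}$. A direct check of entries shows that $J_{n+1}M_k(\lambda)J_{n+1}$ is tridiagonal with diagonal entries $\lambda$, subdiagonal entries all equal to $1$, and superdiagonal entries $(k_n,\dots,k_1)=k^\prime$; transposing this matrix yields exactly $M_{k^\prime}(\lambda)$. Hence $q_{k^\prime}(\lambda)=\det(M_{k^\prime}(\lambda))=\det\big((J_{n+1}M_k(\lambda)J_{n+1})^{T}\big)=(\det J_{n+1})^2\det(M_k(\lambda))=q_k(\lambda)$. (This is the ``particular instance of a general property of determinants'' alluded to above: the continuant of a tridiagonal matrix is unchanged under simultaneous reversal of its diagonal and of the sequence of products of opposite off-diagonal entries, and here the diagonal is constant.)

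For the second part I would take $D:=\diag(1,\ri,\ri^2,\dots,\ri^n)$, an $(n+1)\times(n+1)$ matrix, and note that conjugation by $D$ multiplies the $(i,j)$ entry of a matrix by $\ri^{\,j-i}$. Thus $D^{-1}M_k(\ri\lambda)D$ is tridiagonal with diagonal entries $\ri\lambda$, superdiagonal entries all equal to $\ri$, and subdiagonal entries $(\ri^{-1}k_1,\dots,\ri^{-1}k_n)$; pulling the scalar $\ri$ out of this $(n+1)\times(n+1)$ matrix and using $\ri^{-2}=-1$ gives $D^{-1}M_k(\ri\lambda)D=\ri\,M_{-k}(\lambda)$. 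Taking determinants, $q_k(\ri\lambda)=\det(M_k(\ri\lambda))=\det\big(D^{-1}M_k(\ri\lambda)D\big)=\ri^{\,n+1}\det(M_{-k}(\lambda))=\ri^{\,n+1}q_{-k}(\lambda)$, and rearranging gives \eqref{eq:qel}.

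There is no real obstacle here; the only points needing care are the index bookkeeping in the entry computations (to confirm that the transformed matrices are literally of the form $M_\ell(\cdot)$, not merely similar to it) and, for the second part, keeping track of the powers of $\ri$ --- in particular that factoring a scalar out of an $(n+1)\times(n+1)$ matrix produces $\ri^{\,n+1}$. As an alternative to the conjugation argument for \eqref{eq:qel}, one can induct on $n$ via the recursion \eqref{eq:qk} (with the conventions \eqref{eq:ij} handling the base cases), where the identity $\ri^{-(n-1)}=-\ri^{-(n+1)}$ is what makes the two terms of the recursion transform consistently.
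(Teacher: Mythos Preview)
Your proof is correct and follows essentially the same approach as the paper. For the first identity both you and the paper use the $J_{n+1}$-conjugation/transpose manoeuvre, with the paper writing it as $q_\ell(\lambda)=\det(J_{n+1}A^TJ_{n+1})$ while you apply the two operations in the opposite order; for the second identity the paper simply asserts it ``can be shown by an easy induction on $n$, using \eqref{eq:qk}, or directly by a gauge transformation'', and your diagonal conjugation by $D=\diag(1,\ri,\dots,\ri^n)$ is exactly that gauge transformation carried out in detail, with the induction noted as an alternative.
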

\begin{proof} Suppose first that $\ell=k^\prime$. Then $q_k(\lambda)$ given by \eqref{eq:qk} is the determinant of a matrix $A$, and $q_\ell(\lambda)$ is the determinant of $J_{n+1} A^T J_{n+1}$, so that $q_\ell(\lambda) =(\det J_{n+1})^2\det A= q_k(\lambda)$. That \eqref{eq:qel} holds if $\ell=-k$ can be shown by an easy induction on $n$, using \eqref{eq:qk}, or directly by a gauge transformation.
\end{proof}

Here is the announced explicit expression for the determinant in Lemma \ref{lem:spper}.

\begin{lem} \label{lem:det}
\cite[Lemma 3]{Hagger:symmetries}
For $n\in \N$, $k\in \{\pm 1\}^n$, $\lambda\in \C$, and $\varphi\in [0,2\pi)$,
$$
(-1)^n\det(a_k(\varphi)-\lambda I_n) = p_k(\lambda) - \re^{\ri\varphi} \prod_{j=1}^n k_j - \re^{-\ri\varphi},
$$
where $p_k$ is a monic polynomial of degree $n$ given by
\begin{equation} \label{eq:pdef2}
(-1)^n p_k(\lambda) =   q_{k(1:n-1)}(-\lambda) -k_n q_{k(2:n-2)}(-\lambda).
\end{equation}
Further, $p_k$ is odd (even) if $n$ is odd (even).
\end{lem}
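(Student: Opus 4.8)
The plan is to compute $\det(a_k(\varphi) - \lambda I_n)$ directly via cofactor expansion, exploiting the near-tridiagonal structure of $a_k(\varphi) = A_k^{(n)} + \re^{-\ri\varphi} R_n + k_n\re^{\ri\varphi}R_n^T$. The matrix $a_k(\varphi) - \lambda I_n$ is tridiagonal except for the two ``corner'' entries: entry $\re^{-\ri\varphi}$ in row $n$, column $1$, and entry $k_n\re^{\ri\varphi}$ in row $1$, column $n$. First I would expand the determinant along the first row. The first-row entries are $-\lambda$ (column $1$), $1$ (column $2$), and $k_n\re^{\ri\varphi}$ (column $n$), so three terms arise. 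The $(1,1)$-minor is the determinant of a tridiagonal matrix that is essentially $q_{k(2:n-1)}(-\lambda)$ up to the corner entry $\re^{-\ri\varphi}$ surviving in position $(n-1,1)$ of the minor; the $(1,2)$-minor and the $(1,n)$-minor each again reduce, after one further expansion, to the $q$-type determinants defined in \eqref{eq:pdef3}--\eqref{eq:ij}. The bookkeeping is made clean by noting that $q_k(\lambda)$ is exactly $\det(\lambda I_{n+1} - \text{(the tridiagonal part with subdiagonal }k))$ up to sign, i.e. $q_k(-\lambda) = \det(\lambda I_{n+1} + \cdots)$, so the substitution $\lambda \mapsto -\lambda$ accounts for the alternating signs and for the $(-1)^n$ prefactor in the statement.

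The key steps, in order: (i) expand $\det(a_k(\varphi)-\lambda I_n)$ along row $1$, getting a ``bulk'' contribution plus the two terms carrying $\re^{\pm\ri\varphi}$; (ii) identify the bulk contribution: after expansion it is $(-\lambda)M_{11} - M_{12}$ where $M_{11}, M_{12}$ are minors, and a further expansion of $M_{11}$ along its last row (to eliminate the stray corner entry $\re^{-\ri\varphi}$, which only contributes to the $\re^{-\ri\varphi}$ term, not the bulk) shows the bulk equals $(-1)^n p_k(\lambda)$ with $p_k$ as in \eqref{eq:pdef2}, using the recursions \eqref{eq:qk}; (iii) collect the coefficient of $\re^{\ri\varphi}$: this comes from the $(1,n)$ cofactor $k_n\re^{\ri\varphi}\cdot(-1)^{1+n}\det(\text{minor})$, and the minor is lower-triangular with $k_1,\dots,k_{n-1}$ on the diagonal (its determinant is $\prod_{j=1}^{n-1}k_j$), which combined with $k_n$ and the sign $(-1)^{n+1}$ and the overall $(-1)^n$ gives exactly $-\prod_{j=1}^n k_j$; (iv) collect the coefficient of $\re^{-\ri\varphi}$: this comes from the $(n,1)$ entry of the original matrix, reached only through the $M_{11}$ expansion in step (ii), and a parallel triangular-determinant computation gives coefficient $-1$ after multiplying by $(-1)^n$; (v) finally, the parity statement for $p_k$ is immediate from Lemma \ref{lem:qkbound}: $q_{k(1:n-1)}$ has degree $n$ and the parity asserted there, $q_{k(2:n-2)}$ has degree $n-3$, and both terms of \eqref{eq:pdef2} then have the same parity, matching $n$.

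Alternatively, and perhaps more cleanly, one can prove \eqref{eq:pdef2} by induction on $n$ using only the recursion \eqref{eq:qk}: expand $q_k$ along its first and last rows as in \eqref{eq:qk} and match against the claimed two-term formula, which reduces everything to the one-variable polynomial identities for the $q$'s and avoids carrying $\varphi$ through the algebra at all; the $\re^{\pm\ri\varphi}$ terms are then read off separately as in steps (iii)--(iv) above, since $a_k(\varphi)$ differs from the $\varphi$-independent tridiagonal matrix only in the two corner entries, each of which contributes a single monomial in $\re^{\pm\ri\varphi}$ to the determinant (a rank-one-type perturbation argument, or just: a $2\times 2$ pattern of corner entries can be hit at most once in any term of the Leibniz expansion, and hitting both corners forces a permutation that here contributes nothing because the remaining positions cannot be filled). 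I expect the main obstacle to be purely organizational: keeping the index ranges $k(i:j)$ and the degenerate cases \eqref{eq:ij} straight through the nested cofactor expansions, and pinning down the precise sign in front of $\prod_j k_j$ and in front of $\re^{-\ri\varphi}$ — there is no conceptual difficulty, but the sign accounting is the place where an error is most likely to creep in, so I would verify the small cases $n=1,2,3$ explicitly against the formula as a check. (Since this lemma is quoted as \cite[Lemma 3]{Hagger:symmetries}, the cleanest exposition may simply cite that source and record the short induction as a remark.)
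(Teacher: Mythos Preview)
The paper itself gives no proof here, simply citing \cite[Lemma 3]{Hagger:symmetries}, and your cofactor/Leibniz approach is the natural one. But your outline contains a real error: you claim that ``hitting both corners forces a permutation that here contributes nothing because the remaining positions cannot be filled.'' This is false. A permutation with $\sigma(1)=n$ and $\sigma(n)=1$ can be completed on $\{2,\dots,n-1\}$ using the middle $(n-2)\times(n-2)$ tridiagonal block, and the resulting Leibniz contribution is $-(k_n\re^{\ri\varphi})(\re^{-\ri\varphi})\,q_{k(2:n-2)}(-\lambda) = -k_n\,q_{k(2:n-2)}(-\lambda)$. This is precisely the \emph{second} term of \eqref{eq:pdef2}; the pure tridiagonal determinant (both corners set to zero) contributes only the first term $q_{k(1:n-1)}(-\lambda)$. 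So your proposed split ``bulk $=$ all of $p_k$; corners $=$ the two $\re^{\pm\ri\varphi}$ monomials'' is wrong, and following it literally would produce only half of the formula for $p_k$.

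There is a related slip in step (ii): deleting column $1$ removes the $(n,1)$ entry, so $M_{11}$ is purely tridiagonal and carries no stray $\re^{-\ri\varphi}$; it is $M_{12}$ and $M_{1n}$ that retain that corner. In the cofactor route the missing term $-k_n q_{k(2:n-2)}(-\lambda)$ in fact emerges from the $(1,n)$ cofactor, because $M_{1n}$ still contains the $(n,1)$ entry $\re^{-\ri\varphi}$: expanding $M_{1n}$ along its last row gives $M_{1n} = (-1)^{n}\re^{-\ri\varphi}\,q_{k(2:n-2)}(-\lambda) + \prod_{j=1}^{n-1} k_j$, and the first summand, multiplied by $(-1)^{n+1} k_n \re^{\ri\varphi}$, supplies the second term of $p_k$. (Minor point: the $(1,n)$-minor with the corner suppressed is upper-, not lower-, triangular, though your conclusion about its determinant being $\prod_{j=1}^{n-1}k_j$ is correct.) Once this is fixed your plan goes through; the parity claim in (v) is fine, modulo the harmless slip that $q_{k(2:n-2)}$ has degree $n-2$, not $n-3$.
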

Since, from the above lemmas, $p_k$ is odd (even) and $q_k$ even (odd) if $n$ is odd (even), \eqref{eq:pdef2} implies that
\begin{eqnarray} \label{eq:pk2}
p_k(\lambda) &=&   q_{k(1:n-1)}(\lambda) -k_n q_{k(2:n-2)}(\lambda) \\ \label{eq:pk2a}
& = & q_{k(1:n-1)}(\lambda) + q_{k(2:n)}(\lambda) - \lambda q_{k(2:n-1)}(\lambda),
\end{eqnarray}
this last equation obtained using \eqref{eq:qk}. The following lemma, proved using these representations, makes clear that many different vectors $k$ correspond to the same polynomial $p_k$.

\begin{lem}\label{lem:pksym}
If $k=(k_1,...k_n)\in \{\pm 1\}^n$, for some $n\in \N$, and $\ell=k^\prime$ or $\ell$ is a cyclic permutation of $k$, then $p_k= p_\ell$. If $\ell=-k$ then $p_\ell(\lambda) = \ri^{-n} p_k(\ri \lambda)$.
\end{lem}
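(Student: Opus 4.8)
The plan is to prove the three assertions separately, in each case writing $p_k$ via one of the two $q$-representations \eqref{eq:pk2}, \eqref{eq:pk2a} and using the symmetry relations for $q$ in Lemma \ref{lem:qksym} together with the three-term recursion \eqref{eq:qk}. One point to keep in mind throughout is that \eqref{eq:qk} holds not only for full vectors but for every contiguous sub-block $k(i:j)$ (since $q_{k(i:j)}$ is by definition the polynomial $q_m$ attached to $m=(k_i,\dots,k_j)$), and that the degenerate conventions \eqref{eq:ij} are precisely those that keep \eqref{eq:qk} valid when a block has length $\leq 0$; so the boundary cases for small $n$ (in practice $n\leq 3$) are harmless but should be checked separately.

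For the reversal $\ell=k^\prime=(k_n,\dots,k_1)$ the convenient starting point is \eqref{eq:pk2a}, because order-reversal of $k$ merely interchanges, up to reversal, the blocks appearing there: one checks directly that $\ell(1:n-1)=(k(2:n))^\prime$, $\ell(2:n)=(k(1:n-1))^\prime$ and $\ell(2:n-1)=(k(2:n-1))^\prime$. Applying the identity $q_{m^\prime}=q_m$ from Lemma \ref{lem:qksym} to each of the three terms of \eqref{eq:pk2a} then turns $p_\ell$ into $p_k$.

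For the negation $\ell=-k$ I would start from \eqref{eq:pk2}, so that $p_k(\lambda)=q_{k(1:n-1)}(\lambda)-k_nq_{k(2:n-2)}(\lambda)$. Since $\ell(1:n-1)=-k(1:n-1)$ has length $n-1$ and $\ell(2:n-2)=-k(2:n-2)$ has length $n-3$, two applications of \eqref{eq:qel} give $q_{\ell(1:n-1)}(\lambda)=\ri^{-n}q_{k(1:n-1)}(\ri\lambda)$ and $q_{\ell(2:n-2)}(\lambda)=\ri^{2-n}q_{k(2:n-2)}(\ri\lambda)$. Substituting these and $\ell_n=-k_n$ into \eqref{eq:pk2} for $p_\ell$, the common factor $\ri^{-n}$ comes out and, since $\ri^2=-1$, the surviving bracket is exactly $p_k(\ri\lambda)$; this yields $p_\ell(\lambda)=\ri^{-n}p_k(\ri\lambda)$.

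The cyclic case is the one that genuinely needs an idea, and I expect it to be the main obstacle. It suffices to treat a single left cyclic shift $\ell=(k_2,\dots,k_n,k_1)$ and then iterate, since every cyclic permutation of $k$ is an iterate of this shift. For the shift I would again use \eqref{eq:pk2} rather than \eqref{eq:pk2a}: the key observation is that the blocks occurring in \eqref{eq:pk2} for $\ell$, namely $\ell(1:n-1)$ and $\ell(2:n-2)$, are honest contiguous sub-blocks of $k$, equal to $k(2:n)$ and $k(3:n-1)$ respectively (no index ``wraps around'', which is exactly why \eqref{eq:pk2} is the right choice here). Thus $p_\ell(\lambda)=q_{k(2:n)}(\lambda)-k_1q_{k(3:n-1)}(\lambda)$, while $p_k(\lambda)=q_{k(1:n-1)}(\lambda)-k_nq_{k(2:n-2)}(\lambda)$. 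Expanding $q_{k(1:n-1)}$ along its first row and $q_{k(2:n)}$ along its last row by means of \eqref{eq:qk} rewrites both expressions as $\lambda q_{k(2:n-1)}(\lambda)-k_1q_{k(3:n-1)}(\lambda)-k_nq_{k(2:n-2)}(\lambda)$, so $p_k=p_\ell$. The only thing to watch is, once more, that these two Laplace expansions and the sub-block identifications remain correct in the degenerate boundary cases, which is precisely where the conventions \eqref{eq:ij} do their work.
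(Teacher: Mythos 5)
Your proposal is correct and follows essentially the same route as the paper's proof: the reversal and negation cases via \eqref{eq:pk2a}, \eqref{eq:pk2} and Lemma \ref{lem:qksym}, and the cyclic case via \eqref{eq:pk2} and the recursion \eqref{eq:qk}. The only (cosmetic) difference is that you reduce to a \emph{left} cyclic shift, so that the blocks of $\ell$ are genuine sub-blocks of $k$ and both $p_k$ and $p_\ell$ expand to the same expression, whereas the paper uses the right shift and verifies directly that $p_\ell-p_k$ vanishes.
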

\begin{proof}
Using \eqref{eq:pk2} and \eqref{eq:qk} we see that
\begin{eqnarray*}
p_\ell(\lambda)-p_k(\lambda) &=& q_{\ell(1:n-1)}(\lambda) - q_{k(1:n-1)}(\lambda) + k_n q_{k(2:n-2)}(\lambda) - \ell_n q_{\ell(2:n-2)}(\lambda)\\
& = & \lambda \left(q_{\ell(2:n-1)}(\lambda) - q_{k(1:n-2)}(\lambda)\right) -\ell_1 q_{\ell(3:n-1)}(\lambda)\\
& & \hspace{3ex} +k_{n-1} q_{k(1:n-3)}(\lambda)+ k_n q_{k(2:n-2)}(\lambda) - \ell_n q_{\ell(2:n-2)}(\lambda).
\end{eqnarray*}
If $\ell$ is a cyclic shift of $k$, i.e., $\ell_j=k_{j-1}$, $j=2,...,n$, and $\ell_1=k_n$, then the right-hand side is identically zero. Thus $p_\ell=p_k$ if $\ell$ is a cyclic permutation of $k$.

If $\ell=k^\prime$ then that $p_k=p_\ell$ follows from \eqref{eq:pk2a} and Lemma \ref{lem:qksym}. If $\ell=-k$ then that  $p_\ell(\lambda) = \ri^{-n} p_k(\ri \lambda)$ follows from  \eqref{eq:pk2} and Lemma \ref{lem:qksym}.
\end{proof}

Call $k\in \{\pm 1\}^n$ {\em even} if $\prod_{j=1}^n k_j = 1$, and {\em odd} if $\prod_{j=1}^n k_j = -1$. Then
\cite[Corollary 5]{Hagger:symmetries}, it is immediate from Lemmas \ref{lem:spper} and \ref{lem:det} that
\begin{equation} \label{eq:per}
\spec A_k^{\per} = p_k^{-1}([-2,2]), \mbox{ if $k$ is even}, \; \spec A_k^{\per} = p_k^{-1}(\ri[-2,2]), \mbox{ if $k$ is odd.}
\end{equation}

\paragraph{Complex dynamics.}  In  Section \ref{sec:julia} below we show that filled Julia sets, $K(p)$, of particular polynomials $p$, are contained in the periodic part $\Sigma_\pi$ of the almost sure spectrum of the Feinberg-Zee random hopping matrix. To articulate and prove these results
we will need terminology and results from complex dynamics.

Throughout this section $p$ denotes a polynomial of degree $\geq 2$. We have defined above the compact set that is the filled Julia set $K(p)$, the Julia set $J(p) = \partial K(p) \subset K(p)$, the Fatou set $F(p)$ (the open set that is the complement of $J(p)$), and the orbit of $z\in \C$. It is easy to see that, if $z\not\in K(p)$, i.e., the orbit of $z$ is not bounded, then $p^n(z)\to\infty$ as $n\to\infty$, i.e., $z\in A_p(\infty)$, the {\em basin of attraction of infinity}. We call $S\subset \C$ {\em invariant} if $p(S)=S$, and {\em completely invariant} if both $S$ and its complement are invariant, which holds iff $p^{-1}(S)=S$. Clearly $A_p(\infty)$ is completely invariant.

 We call $z$ a {\em fixed point} of $p$ if $p(z)=z$ and a {\em periodic point} if $p^n(z)=z$, for some $n\in\N$, in which case the finite sequence $(z_0,z_1,...,z_{n-1})$, where $z_0=z$, $z_1=p(z_0)$, ..., is the {\em cycle} of the periodic point $z$. We say that $z$ is an {\em attracting fixed point} if $|p^\prime(z)|<1$, a {\em repelling fixed point} if $|p^\prime(z)|>1$, and a {\em neutral fixed point} if $|p^\prime(z)|=1$. Generalising, we say that a periodic point $z$ is attracting/repelling/neutral if $|P^\prime(z)|<1$/$>1$/$=1$, where $P=p^n$. By the chain rule, $P^\prime(z) = p^\prime(z_0)...p^\prime(z_{n-1})$, where $z_0=z$ and $z_j:= p(z_{j-1})$, $j=1,...,n-1$.

The value $\gamma = P^\prime(z)$ is the {\em multiplier} of the neutral periodic point $z$ (clearly $|\gamma|=1$).
If $z$ is a neutral periodic point with multiplier $\gamma$ we say that it is {\em rationally neutral} if $\gamma^N=1$ for some $N\in \N$, otherwise we say that it is {\em irrationally neutral}. We call $z$ a {\em critical point} of $p$ if $p^\prime(z)=0$.

If $w$ is an attracting periodic point we denote by $A_p(w)$ the {\em basin of attraction} of the cycle $C= \{z_0,...,z_{n-1}\}$ of $z$, by which we mean
$A_p(w) := \{z\in \C: d(p^n(z),C)\to 0 \mbox{ as } n\to\infty\}$.
Here, for $S\subset\C$ and $z\in\C$, $d(z,S):= \inf_{w\in S}|z-w|$. It is easy to see that $A_p(w)$ contains some neighbourhood of $C$, and hence that $A_p(w)$ is open.

We will make use of standard properties of the Julia set captured in the following theorem. We recall that a family $F$ of analytic functions is {\em normal} at a point $z\in \C$ if, in some fixed neighbourhood $N$ of $z$, each $f\in F$ is analytic and every sequence drawn from $F$ has a subsequence that is convergent uniformly either to some analytic function or to $\infty$.

\begin{thm} \cite[Summary 14.12]{Falconer03} $J(p)$ is compact with no isolated points, is uncountable, and has empty interior. $J(p)$ is completely invariant, $J(p)=J(p^n)$, for every $n\in\N$,
\begin{equation} \label{eq:nn}
J(p) = \{z\in\C: \mbox{ the family } \{p^1,p^2,...\} \mbox{ is not normal at }z\},
\end{equation}
$J(p)$ is the closure of the repelling periodic points of $p$, and, for all except at most one $z\in \C$,
\begin{equation} \label{eq:Jpi}
J(p)\subset \overline{\bigcup_{n=1}^\infty p^{-n}(\{z\})}.
\end{equation}
\end{thm}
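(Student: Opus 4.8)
The plan is to derive every assertion from two ingredients: Montel's theorem (if a family of holomorphic functions on a domain omits three fixed values of $\C\cup\{\infty\}$, then every sequence from it has a subsequence converging locally uniformly to a holomorphic function or to $\infty$), and the elementary remark that for a polynomial $p$ of degree $d\geq 2$ there is $R>0$ with $|p(z)|\geq 2|z|$ whenever $|z|\geq R$, so that $\C$ is the disjoint union $K(p)\cup A_p(\infty)$ with $A_p(\infty)=\bigcup_n p^{-n}(\{|w|>R\})$ open and completely invariant and $K(p)\subseteq\{|z|\leq R\}$ compact, both nonempty. The first step is to prove \eqref{eq:nn} by identifying the set $N$ of points at which $\{p^1,p^2,\dots\}$ is not normal with $J(p)=\partial K(p)=\partial A_p(\infty)$: on $A_p(\infty)$ one has $p^n\to\infty$ locally uniformly, so the family is normal there; on a component of $\intt K(p)$ the orbits stay in the compact set $K(p)$ and hence omit a neighbourhood of $\infty$, so Montel gives normality; and at $z\in\partial K(p)=\partial A_p(\infty)$ every neighbourhood meets both $A_p(\infty)$ and $K(p)$, so no subsequence of $(p^n)$ can converge locally uniformly to a holomorphic limit (which would be finite at an escaping point) or to $\infty$ (which would be infinite at a point with bounded orbit). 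Hence $N=\partial A_p(\infty)=J(p)$.

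Most remaining statements then flow from the \emph{blow-up property}: if $U$ is open and meets $J(p)$, then $\{p^n|_U\}$ is not normal, so by Montel $W:=\bigcup_{n\geq1}p^n(U)$ omits at most two points of $\C\cup\{\infty\}$; its complement is backward invariant (if $p(w)\notin W$ then $w\notin W$), hence, as $d\geq2$, consists of exceptional points, which for a polynomial are $\infty$ together with at most one finite super-attracting fixed point, all lying in $F(p)$. From this: $J(p)$ is compact, being the boundary of the compact set $K(p)$; complete invariance of $A_p(\infty)$ is immediate, and since $p$ is a continuous open surjection of $\C\cup\{\infty\}$ it passes to interiors and boundaries, giving $p^{-1}(J(p))=J(p)$, while $A_{p^n}(\infty)=A_p(\infty)$ gives $J(p^n)=\partial A_{p^n}(\infty)=J(p)$; if $\intt J(p)\neq\emptyset$ one could take $U\subseteq J(p)$ and, using $p(J(p))\subseteq J(p)$, force $J(p)$ to contain all of $\C\cup\{\infty\}$ bar at most one point, hence (being closed) to equal the sphere, contradicting $\infty\in F(p)$; for \eqref{eq:Jpi}, if $z$ is not the finite exceptional point then for any disk $D$ about any $w\in J(p)$ the blow-up property yields $n$ with $z\in p^n(D)$, i.e. $D\cap p^{-n}(\{z\})\neq\emptyset$, and letting $D$ shrink gives $w\in\overline{\bigcup_n p^{-n}(\{z\})}$; finally the same property shows $J(p)$ is the smallest closed completely invariant set with at least three points, and since the derived set $J(p)'$ is closed, completely invariant, and infinite (a finite completely invariant set consists of exceptional points, which lie in $F(p)$ and so cannot sit inside $J(p)$), we get $J(p)\subseteq J(p)'\subseteq J(p)$, so $J(p)$ is perfect — whence uncountable, a nonempty perfect subset of a complete metric space being uncountable.

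The remaining and hardest part, which I expect to be the main obstacle, is that $J(p)$ is the closure of the repelling periodic points; this is Fatou's theorem and needs the function-valued form of Montel. A repelling periodic point $\zeta$ of period $m$ satisfies $((p^m)^k)'(\zeta)=(p^m)'(\zeta)^k\to\infty$ while $(p^m)^k(\zeta)=\zeta$, so $\{(p^m)^k\}$ is not normal at $\zeta$ and $\zeta\in J(p^m)=J(p)$. For the converse, fix $z_0\in J(p)$; since $J(p)$ is perfect and the set of fixed points, critical values and non-repelling periodic points of $p$ is finite (attracting periodic points lie in $F(p)$, and there are only finitely many neutral cycles), one may assume $z_0$ avoids all of these, take a small disk $D\ni z_0$ disjoint from the non-repelling periodic points, and choose two holomorphic branches $S_1,S_2$ of $p^{-1}$ on $D$ with $D$, $S_1(D)$ and $S_2(D)$ pairwise disjoint. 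If $D$ contained no periodic point at all, then $p^n(z)\notin\{z,S_1(z),S_2(z)\}$ for all $n$ and $z\in D$ (since $p^n(z)=S_i(z)$ forces $p^{n+1}(z)=z$), so the holomorphic functions $\phi_n:=(p^n-S_1)/(p^n-S_2)$ omit $0,1,\infty$ on $D$ and are normal there; solving $p^n=(\phi_nS_2-S_1)/(\phi_n-1)$ and using that $S_2-S_1$ does not vanish then forces $\{p^n|_D\}$ itself to be normal, contradicting $z_0\in J(p)$. Hence $D$ contains a periodic point, which is repelling by the choice of $D$; letting $D$ shrink to $z_0$ shows the repelling periodic points accumulate at $z_0$. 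Together with the easy inclusion this gives $J(p)=\overline{\{\text{repelling periodic points of }p\}}$. All of this is classical, and detailed proofs may be found in \cite{Falconer03}.
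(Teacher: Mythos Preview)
The paper does not give its own proof of this theorem: it is stated purely as a citation of \cite[Summary~14.12]{Falconer03}, collecting standard facts from complex dynamics that are used as black boxes in Section~\ref{sec:julia}. So there is nothing in the paper to compare your argument against.

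That said, your sketch is the standard textbook route (essentially the one in Falconer and in Carleson--Gamelin): identify $J(p)$ with the non-normality locus via Montel and the dichotomy $\C=K(p)\cup A_p(\infty)$, derive the blow-up property and its consequences (complete invariance, empty interior, perfectness, the backward-orbit inclusion \eqref{eq:Jpi}), and finish with Fatou's argument for density of repelling periodic points using two local inverse branches and the three-omitted-values version of Montel. One point to be careful with: in your last paragraph you assume offhand that there are only finitely many non-repelling cycles so that a small disk about $z_0$ can avoid them all. That finiteness is itself a nontrivial theorem (Fatou's bound, sharpened by Shishikura), and strictly speaking should either be proved or cited; Fatou's original density argument can in fact be arranged so as not to rely on it, by first showing periodic points are dense in $J(p)$ and then invoking the finiteness result separately. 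Since you close by deferring details to \cite{Falconer03}, this is acceptable as a sketch, and in the context of the present paper no proof is needed at all.
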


The Fatou set $F(p)$ has one unbounded component $U$. It follows from \eqref{eq:nn} that $U\subset A_p(\infty)$; indeed, $U=A_p(\infty)$ as a consequence of the maximum principle \cite{CarlesonGamelin92}. It may happen that this is the only component of $F(p)$ so that $A_p(\infty)=F(p)$. This is the case if $k=(1,1)$ and $p(z)=p_k(z)=z^2-2$, when $K(p)=J(p)=[-2,2]$ \cite[p.~55]{CarlesonGamelin92}. If $F(p)$ has more than one component it either has two components (for example, if $k=(-1,1)$ and $p(z)=p_k(z)=z^2$, when $K(p)=\ovD$ and $J(p)=\partial \D$), or infinitely many components \cite[Theorem IV.1.2]{CarlesonGamelin92}.
It follows from \eqref{eq:Jpi} that $J(p) = \partial A_p(\infty)$ and $J(p) = \partial A_p(w)$ if $w$ is an attracting fixed point or periodic point \cite[Theorem III.2.1]{CarlesonGamelin92}. Arguing similarly \cite[Theorem 1.7]{CarlesonGamelin92}, $J(p) = \partial F_B(p)$, where $F_B(p):= \intt(K(p))= F(p)\setminus A_p(\infty)$, so that $K(p)= \overline{F_B(p)}$.

Because $J(p)$ is completely invariant and $p$ is an open map, the image $V=p(U)$ of any  component $U$ of $F(p)$ is also a component of $F(p)$. Now consider the orbit of $U$, i.e. $(p(U))_{n=1}^\infty$. The following statement of possible behaviours is essentially Sullivan's theorem \cite[pp.~69-71]{CarlesonGamelin92}.
\begin{thm} \label{thm:sullivan}
Let $U$ be a component of $F(p)$. Then one of the following cases holds:

i) $p^n(U)=U$, for some $n\in \N$, in which case we call $U$ a {\em periodic component} of $F(p)$, and call the smallest $n$ for which $p^n(U)=U$ the {\em period} of $U$. (If $n=1$, when $U$ is invariant, we also term $U$ a {\em fixed component} of $F(p)$.)

ii) $p^r(U)$ is a periodic component of $F(p)$ for some $r\in \N$, in which case we say that $U$ is a {\em preperiodic component} of $F(p)$.
\end{thm}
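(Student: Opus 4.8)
The plan is to reduce the assertion to Sullivan's theorem on the non-existence of wandering domains, prefaced by a short formal observation. First I would record that, since $J(p)$ is completely invariant and $p$ is an open map, the image under $p$ of any component of $F(p)$ is again a component of $F(p)$ (as already noted above); hence the forward orbit $U=p^0(U),\,p(U),\,p^2(U),\dots$ is a sequence of Fatou components, and any two of them, being connected components of the open set $F(p)$, are either equal or disjoint. Suppose this sequence takes only finitely many values. Then by the pigeonhole principle $p^a(U)=p^b(U)$ for some integers $0\le a<b$, so, writing $V:=p^a(U)$, we have $V=p^b(U)=p^{b-a}(V)$ with $b-a\in\N$; thus $V$ is a periodic component of $F(p)$. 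If $a=0$ this reads $p^b(U)=U$, which is case (i); if $a\ge 1$ then $p^a(U)$ is a periodic component, which is case (ii) with $r=a$. So the whole statement comes down to showing that the orbit of $U$ is finite, equivalently — since distinct components are disjoint — that $U$ is \emph{not} a wandering domain of $p$.

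That last point is precisely Sullivan's no-wandering-domains theorem, and for a polynomial of degree $\ge 2$ (a special case of the rational setting) I would invoke it in the form given in \cite[pp.~69--71]{CarlesonGamelin92}. For completeness I would sketch, without reproducing, Sullivan's argument: were $U$ wandering, one could — after replacing $U$ by a forward image if necessary — spread an arbitrary Beltrami coefficient supported on $U$ over the pairwise disjoint grand orbit of $U$ to obtain a $p$-invariant Beltrami differential; integrating it via the measurable Riemann mapping theorem produces a quasiconformal map conjugating $p$ to a polynomial of the same degree $d$, and letting the coefficient on $U$ vary gives an infinite-dimensional family of such deformations that maps injectively into the $(d-1)$-dimensional space of degree-$d$ polynomials modulo affine conjugacy — a contradiction.

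I expect the whole difficulty to be concentrated in this last step: the non-existence of wandering domains has exactly the strength of the stated dichotomy (conversely, if the orbit of $U$ were infinite then, by the distinctness of its members, neither (i) nor (ii) could hold), so at this level of generality there is no shortcut around Sullivan's theorem, and a fully self-contained treatment would have to import the quasiconformal-surgery machinery indicated above. The reduction in the first paragraph, by contrast, is routine; I would also note that the dichotomy (i)/(ii) uses nothing about how many periodic components $F(p)$ has — only that the forward orbit of a single component is eventually periodic.
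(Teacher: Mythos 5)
Your proposal is correct and matches the paper's treatment: the paper states this result as a known preliminary, attributing it directly to Sullivan's theorem in \cite[pp.~69--71]{CarlesonGamelin92} without further proof, and your reduction (images of Fatou components are Fatou components, pigeonhole on a finite orbit, with the only real content being the non-existence of wandering domains) is exactly the intended argument. Your sketch of the quasiconformal deformation proof is a correct outline of Sullivan's theorem, which the paper, like you, imports rather than proves.
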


The above theorem makes clear that the orbit of every component $U$ of $F(p)$ enters a periodic cycle after a finite number of steps. To understand the eventual fate under iterations of $p$ of the components of the Fatou set it is helpful to understand the possible behaviours of a periodic component. This is achieved in the {\em classification theorem} (e.g.~\cite{CarlesonGamelin92}). To state this theorem we introduce further terminology. Let us call a fixed component $U$ of $F(p)$ a {\em parabolic} component if there exists a neutral fixed point $w\in \partial U$ with multiplier 1 such that the orbit of every $z\in U$ converges to $w$. Call a fixed component $U$ of $F(p)$ a {\em Siegel disk} if $p$ is conjugate to an irrational rotation on $U$, which means that there exists a conformal mapping $\varphi:U\to V$ with $0\in V$  and an irrational $\theta\in \R$ such that
\begin{equation} \label{eq:st}
\varphi(p(z)) = g(\varphi(z)) = \gamma \varphi(z), \quad z\in U,
\end{equation}
where $g(w) = \gamma w$, $w\in V$, and $\gamma = \exp(2\pi \ri \theta)$. It is easy to see that, for $w\in U$, $p(w)=w$ iff $w=\varphi^{-1}(0)$, and $p^\prime(w)=\gamma$. Thus every Siegel disk contains a unique irrationally neutral fixed point (the Siegel disk fixed point).

\begin{thm} \label{thm:class}
{\em Classification Theorem \cite[Theorem IV.2.1]{CarlesonGamelin92}.}
If $U$ is a periodic component of $F(p)$ with period $n\in \N$, in which case $U$ is also a component of $F(p^n)=F(p)$, then exactly one of the following holds:\footnote{The result as stated for rational $p$ in \cite{CarlesonGamelin92} gives a 4th option, that $U$ is a {\em Herman ring component} of $F(p^n)$. This option is excluded if $p$ is a polynomial \cite[p.~166]{Milnor}.}

a) $U$ contains an attracting periodic point $w$ which is an attracting fixed point of $p^n$, and $U\subset A_p(w)$;

b) $U$ is a parabolic component of $F(p^n)$;

c) $U$ is a Siegel disk component of $F(p^n)$.
\end{thm}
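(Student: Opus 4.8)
The plan is to reduce at once to an \emph{invariant} component and then run the normal-family dichotomy that is the backbone of the classical proof. If $U$ is a periodic component of $F(p)$ of period $n$, put $g:=p^n$; since $J(p)=J(p^n)$, $U$ is a component of $F(g)$ with $g(U)=U$, and the three alternatives for $U$ as a periodic component of $F(p)$ are precisely the three alternatives for $U$ as a \emph{fixed} component of $F(g)$ (reading every ``periodic point of $p^n$'' as ``fixed point of $g$''). By definition of the Fatou set, $\{g^k|_U:k\in\N\}$ is a normal family, so every subsequence of $(g^k|_U)$ has a locally uniformly convergent sub-subsequence with holomorphic limit $\psi\colon U\to\overline{\C}$. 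Everything hinges on whether some such $\psi$ is nonconstant.

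\emph{Nonconstant limit $\Rightarrow$ Siegel disc.} Suppose $g^{k_j}|_U\to\psi$ with $\psi$ nonconstant. After refining the subsequence so that also $m_j:=k_{j+1}-k_j\to\infty$ and $(g^{m_j}|_U)$ converges, say to $\phi$, the relation $g^{k_{j+1}}=g^{m_j}\circ g^{k_j}$ together with $g^{k_{j+1}}|_U\to\psi$ and $\psi(U)\subset U$ (an open subset of $\overline U$, by the open mapping theorem) forces $\psi=\phi\circ\psi$ on $U$; hence $\phi=\mathrm{id}$ on the open set $\psi(U)$, so $\phi=\mathrm{id}$ on $U$ by the identity theorem. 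A Hurwitz-type argument then upgrades this to the statement that $g\colon U\to U$ is a conformal automorphism. For a polynomial, $U$ cannot be a Herman ring \cite[p.~166]{Milnor}, so $U$ may be taken simply connected; transporting by the Riemann map, $g$ becomes a M\"obius automorphism of the disc, and $g^{m_j}\to\mathrm{id}$ with $m_j\to\infty$ rules out the parabolic and hyperbolic (Denjoy--Wolff) cases, while a rational rotation number would make $g$, hence $p$, of finite order --- impossible for $\deg p\ge 2$. Thus $g$ is conjugate to an irrational rotation, i.e. $U$ is a Siegel disc: case (c).

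\emph{All limits constant $\Rightarrow$ case (a) or (b).} Fix $z_0\in U$ and a subsequence along which $g^{k_j}|_U$ converges locally uniformly, necessarily to a constant $w_0$. Since $g^{k_j+1}(z_0)=g(g^{k_j}(z_0))\to g(w_0)$ by continuity and $g^{k_j+1}(z_0)=g^{k_j}(g(z_0))\to w_0$ because $g(z_0)\in U$, we get $g(w_0)=w_0$. If $U$ is the unbounded component this gives $w_0=\infty$, the superattracting fixed point, and case (a); so assume $w_0\in\C$. By the Snail Lemma $|g'(w_0)|\le1$ and, if $|g'(w_0)|=1$, then $g'(w_0)=1$; moreover an irrationally neutral fixed point in $U$ would make some $(g^{k_j}|_U)$ converge to a nonconstant limit (choose $k_j$ with $\gamma^{k_j}\to1$, where $\gamma:=g'(w_0)$), which is excluded here. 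Since repelling and rationally neutral fixed points lie in $J(p)$ whereas $U\subset F(p)$ is open, the surviving possibilities are: $w_0\in U$ attracting, whence Koenigs/B\"ottcher linearization shows $g^k\to w_0$ throughout $U$ and $U\subset A_p(w_0)$ --- case (a); or $w_0\in\partial U$ with $g'(w_0)=1$, whence the Leau--Fatou flower theorem exhibits an attracting petal at $w_0$ inside $F(p)$, shows $U$ lies in the corresponding parabolic basin, and gives convergence of every orbit in $U$ to $w_0$ --- case (b). The cases are mutually exclusive because an attracting or parabolic fixed point is never irrationally neutral, and the fixed point of a Siegel disc is interior to $U$ while a parabolic fixed point lies on $\partial U$.

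I expect two points to carry the real weight. The first is the step from ``some subsequential limit of $(g^k|_U)$ is nonconstant'' to ``$g|_U$ is a conformal automorphism'': the composition/Hurwitz argument must be made precise, and one must dispose of the connectivity of $U$, invoking the fact that polynomials have no Herman rings. The second is the parabolic branch, which rests on the Leau--Fatou flower theorem (local normal forms, attracting and repelling petals near a multiplier-$1$ fixed point) and on the Snail Lemma that pins down $g'(w_0)$; these are the substantive pieces of analysis. The attracting branch, by contrast, is routine via Koenigs/B\"ottcher. All of this classical machinery is exactly what is packaged into \cite[Theorem IV.2.1]{CarlesonGamelin92}.
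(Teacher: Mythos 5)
The paper does not prove this statement at all: it is quoted verbatim as Theorem IV.2.1 of \cite{CarlesonGamelin92}, with the footnote citing \cite{Milnor} only to exclude the Herman-ring alternative for polynomials. Your sketch is a correct outline of exactly the classical argument packaged into that citation (normal-family dichotomy, Hurwitz and Denjoy--Wolff for the Siegel case, snail lemma and Leau--Fatou for the attracting/parabolic split), so it matches the source the paper relies on and nothing further is needed.
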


The following proposition relates the above cases to critical points of $p$ (see \cite[Theorems III 2.2 and 2.3, pp.~83-84]{CarlesonGamelin92}):

\begin{prop} \label{prop:crit}
If $U$ is a periodic component of $F(p)$ with period $n$ then either: (i) $U$ is a parabolic component of $F(p^n)$ or contains an attracting periodic point, in which case $\cup_{m=1}^n p^m(U)$ contains a critical point of $p$;  or (ii) $U$ is a Siegel disk component of $F(p^n)$ and there is a critical point $w$ of $p$ such that the orbit of $w$ is dense in $\partial U$.
\end{prop}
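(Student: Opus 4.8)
\medskip
\noindent\textit{Proof plan.}\ The plan is to reduce to the case $n=1$ and then run through the three possibilities of the Classification Theorem (Theorem~\ref{thm:class}). Put $P:=p^n$. Since $J(P)=J(p)$, the components of $F(P)$ and of $F(p)$ coincide, so $U$ becomes a fixed component of $F(P)$; and by the chain rule $P'(z)=\prod_{j=0}^{n-1}p'(p^j(z))$, so $P'(z)=0$ exactly when $p^j(z)$ is a critical point of $p$ for some $j\in\{0,\dots,n-1\}$. Hence, if $U$ contains a critical point $z_\ast$ of $P$, then some $p^j(z_\ast)$ is a critical point of $p$ lying in $p^j(U)\subseteq\bigcup_{m=0}^{n-1}p^m(U)=\bigcup_{m=1}^{n}p^m(U)$ (using $p^n(U)=U$), which is conclusion~(i); and if in addition the forward $P$-orbit of $z_\ast$ is dense in $\partial U$, then $w:=p^j(z_\ast)$ is a critical point of $p$ whose forward $p$-orbit contains the forward $P$-orbit of $w$ and so is dense in $\partial U$, which is conclusion~(ii). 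Thus it suffices to prove: a fixed component $U$ of $F(P)$ either contains a critical point of $P$ --- which is precisely what happens in cases~(a) and~(b) of Theorem~\ref{thm:class} --- or, in case~(c), $P$ has a critical point whose forward orbit is dense in $\partial U$. We may assume $U\neq A_p(\infty)$, since otherwise $\infty\in U$ is a critical fixed point of $p$ and~(i) is immediate; thus $U\subseteq K(p)$ is bounded and $\overline U$ is compact.

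For cases~(a) and~(b) I would argue by contradiction, assuming $U$ contains no critical point of $P$. As $U$ is fixed, $P(U)=U$; since $\overline U$ is compact and $\partial U\subseteq J(P)$ is completely invariant, any sequence in $U$ leaving every compact subset of $U$ is mapped by $P$ into $\overline U\cap J(P)=\partial U$, so $P|_U\colon U\to U$ is proper, and --- being also a local homeomorphism --- a finite-sheeted covering map. Lifting $P|_U$ through the universal covering $\pi\colon\D\to U$ gives $\widehat P\colon\D\to\D$ with $\pi\circ\widehat P=P\circ\pi$; as a lift of the covering $P\circ\pi$ through the covering $\pi$, the map $\widehat P$ is itself a covering of $\D$ by $\D$, hence a conformal automorphism of $\D$. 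I would then linearise $P$ at its distinguished fixed point $w$: in case~(a), $w\in U$ is not critical, so its multiplier $\mu$ satisfies $0<|\mu|<1$ and Koenigs' theorem gives a conformal map $\phi$ of a neighbourhood of $w$ in $U$ onto a disc with $\phi\circ P=L\circ\phi$, $L(\zeta)=\mu\zeta$; in case~(b), $w\in\partial U$ has multiplier $1$ and the Leau--Fatou flower theorem gives an attracting petal $\Pi\subseteq U$ and a conformal map $\phi\colon\Pi\to\{\operatorname{Re}\zeta>c\}$ with $\phi\circ P=L\circ\phi$, $L(\zeta)=\zeta+1$. In either case $\psi:=\phi^{-1}$ maps a simply connected domain $D$ into $U$ and satisfies $P\circ\psi=\psi\circ L$. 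Lifting $\psi$ to $\widetilde\psi\colon D\to\D$ through $\pi$ and adjusting $\widehat P$ by a deck transformation so that $\widehat P\circ\widetilde\psi=\widetilde\psi\circ L$ on $D$, the formula $\widetilde\psi(\zeta):=\widehat P^{-1}\!\bigl(\widetilde\psi(L(\zeta))\bigr)$ extends $\widetilde\psi$ from $D$ to the strictly larger domain $L^{-1}(D)$, and iterating extends $\widetilde\psi$ to an entire map $\C\to\D$. By Liouville $\widetilde\psi$ is constant, contradicting that $\pi\circ\widetilde\psi=\psi$ is non-constant. Hence $U$ does contain a critical point of $P$, which gives~(i).

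In case~(c), $U$ is a Siegel disc: there is a conformal $\varphi\colon U\to\D$ with $\varphi\circ P=(\zeta\mapsto\gamma\zeta)\circ\varphi$, $\gamma=\re^{2\pi\ri\theta}$ with $\theta$ irrational, so $P|_U$ is injective and $U$ contains no critical point of $P$. The curves $\Gamma_r:=\varphi^{-1}(\{\,|\zeta|=r\,\})$, $0<r<1$, are $P$-invariant Jordan curves on which $P$ acts as the minimal rotation by $\gamma$, and $\Gamma_r\to\partial U$ as $r\to1^-$. The key point is the classical fact that $\partial U\subseteq\overline{\bigcup_{m\geq 1}P^m(\operatorname{Crit}(P))}$, where $\operatorname{Crit}(P)$ is the (finite, non-empty) critical set: were some $\zeta_0\in\partial U$ to have a neighbourhood whose intersection with $\bigcup_{m\geq 0}P^m(\operatorname{Crit}(P))$ has closure missing $\zeta_0$, one could --- using that inverse branches of $P$ are locally well defined away from critical values, that the curves $\Gamma_r$ accumulate at $\zeta_0$, and a normal-families argument --- analytically continue $\varphi^{-1}$ across $\partial U$ near $\zeta_0$ and deduce $\zeta_0\in F(P)$, a contradiction. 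Granting this, $\partial U=\bigcup_{c\in\operatorname{Crit}(P)}\bigl(\partial U\cap\overline{\{P^m(c):m\geq 1\}}\bigr)$ is a finite union of closed sets, and using that $\partial U$ is a continuum carrying the minimal rotation dynamics of the $\Gamma_r$ one can single out one $c_0\in\operatorname{Crit}(P)$ with $\partial U\subseteq\overline{\{P^m(c_0):m\geq 1\}}$; the reduction above then yields a critical point $w$ of $p$ with forward $p$-orbit dense in $\partial U$, which is~(ii).

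The routine part will be the reduction to $n=1$ together with the covering/universal-cover argument for cases~(a) and~(b). The main obstacle is case~(c): controlling $\partial U$ for a Siegel disc via the post-critical set, and then isolating a \emph{single} critical point whose orbit is dense in $\partial U$ rather than merely that the union of critical orbits is dense, is genuinely delicate --- for a polynomial the boundary of a Siegel disc need not be locally connected, so $\varphi^{-1}$ need not extend continuously to $\partial\D$ and one must argue with the approximating invariant curves $\Gamma_r$. For the full details of this last step I would refer to \cite[Theorems~III.2.2 and~2.3, and Chapter~IV]{CarlesonGamelin92}, using \cite{Milnor} for the background results (Koenigs linearisation, the Leau--Fatou flower theorem) invoked above.
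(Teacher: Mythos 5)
The paper offers no proof of this proposition at all --- it is quoted from \cite[Theorems III.2.2 and 2.3]{CarlesonGamelin92} --- so your write-up is necessarily a ``different route'': it is an actual argument where the paper has only a citation. Your reduction to $n=1$ is sound (one small repair: for (ii), the forward $P$-orbit of $w=p^j(z_\ast)$ is dense in $\partial(p^j(U))$, not in $\partial U$; but the $p$-orbit of $w$ contains $\{P^m(z_\ast):m\ge 1\}=\{p^{mn-j}(w):m\ge1\}$, which is dense in $\partial U$, so the conclusion survives). Your treatment of cases (a) and (b) is correct and is essentially the classical Fatou argument: properness of $P|_U$, the lift to an automorphism of the universal cover $\D$, and the Liouville contradiction from extending the lifted Koenigs/Fatou coordinate to all of $\C$.

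The genuine gap is exactly where you sense it, in case (c). What your normal-families sketch can deliver is $\partial U\subseteq\overline{\bigcup_m P^m(\mathrm{Crit}(P))}$ --- and that union-over-all-critical-points statement is also all that the cited Theorem III.2.3 of \cite{CarlesonGamelin92} proves. The passage to a \emph{single} critical point with dense orbit is not a Baire-category-plus-transitivity exercise: the closed sets $\partial U\cap\overline{O^+(c)}$ do cover $\partial U$, and one of them has nonempty relative interior, but to spread that to all of $\partial U$ you would need $P|_{\partial U}$ to be topologically transitive, and this is not known; the boundary of a polynomial Siegel disk need not be locally connected and does not in any obvious sense ``carry the minimal rotation dynamics of the $\Gamma_r$''. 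The single-critical-point refinement is a theorem of Ma\~n\'e (\emph{On a theorem of Fatou}, 1993), proved by a genuinely different and harder argument, and it is not in the pages of \cite{CarlesonGamelin92} the paper points to (so the paper's statement of (ii) is itself stronger than its citation supports). Two mitigating observations: first, the weaker union version is all that is ever used --- in Proposition \ref{prop:cd2a} one only needs $\partial U\subseteq T$, and since $\partial U$ is perfect, $T$ is closed, and the critical points outside $K(p)$ contribute only finitely many orbit points near $\partial U$, the inclusion $\partial U\subseteq T\cup(\text{finite set})$ already forces $\partial U\subseteq T$; second, if you do want the statement as printed, cite Ma\~n\'e rather than trying to derive it from rotation dynamics on the boundary.
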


The following proposition will do the work for us in Section \ref{sec:julia}.

\begin{prop} \label{prop:cd2a}
Suppose that $S\subset \C$ is bounded, open and simply-connected, that $T\subset S$ is closed, and that the orbit of every critical point in $K(p)$ eventually lies in $T$. Then
$$
K(p)\subset G:= \overline{\bigcup_{n=1}^\infty p^{-n}(S)}.
$$
\end{prop}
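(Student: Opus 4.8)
The plan is to show that $K(p)\setminus G=\emptyset$ by analysing, via the classification theorem (Theorem~\ref{thm:class}) and Proposition~\ref{prop:crit}, every component of the Fatou set $F(p)$ that meets $K(p)$, i.e.\ every bounded component of $F(p)$, together with $J(p)$ itself. First I would dispose of $J(p)$: since $S$ is open and nonempty it contains some point $z_0$, and then $\bigcup_{n=1}^\infty p^{-n}(S)\supset \bigcup_{n=1}^\infty p^{-n}(\{z_0\})$, so $G\supset \overline{\bigcup_n p^{-n}(\{z_0\})}\supset J(p)$ by \eqref{eq:Jpi} (applying it at the at-most-one exceptional point if necessary — if $z_0$ happens to be that exceptional point, just pick another point of the open set $S$). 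So it remains to show $F_B(p)=\intt(K(p))\subset G$, and since $F_B(p)=F(p)\setminus A_p(\infty)$ is a union of bounded Fatou components, it suffices to show every bounded component $U$ of $F(p)$ satisfies $U\subset G$.

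Next I would use Sullivan's theorem (Theorem~\ref{thm:sullivan}): any such $U$ is preperiodic, so $p^r(U)$ is a periodic component $U'$ for some $r\geq 0$. Because $p^{-1}(G)\subset G$ (indeed $p^{-1}\big(\bigcup_n p^{-n}(S)\big)=\bigcup_{n\geq 2}p^{-n}(S)\subset \bigcup_n p^{-n}(S)$, and taking closures preserves this since $p$ is continuous and $G$ is compact — it is closed and bounded as a closed subset of $K(p)\cup(\text{a bounded set})$; more carefully, $G$ is contained in a fixed bounded set because $S$ is bounded and $p^{-n}(S)$ stays in a bounded region, namely in $\{z: (p^m(z))\text{ bounded}\}\cup K(p)$-type estimates, or simply: $p^{-n}(S)$ is bounded for each $n$ and their union together with $J(p)$ is bounded as it lies in $K(p)\cup p^{-1}(S)\cup\cdots$), it is enough to prove $U'\subset G$ for the periodic component $U'$, since then $U=p^{-r}\circ p^r(U)\subset p^{-r}(U')\subset p^{-r}(G)\subset G$. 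So the problem reduces to: every periodic component $U'$ of $F(p)$ with $U'\cap K(p)\neq\emptyset$ (equivalently, $U'$ bounded) is contained in $G$.

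Now I would invoke Proposition~\ref{prop:crit} applied to $U'$ with its period $n$. In case (i), $U'$ is parabolic or contains an attracting periodic point, and $\bigcup_{m=1}^n p^m(U')$ contains a critical point $w$ of $p$; since $w\in U'$'s forward orbit, which is bounded (it cycles through bounded components), $w\in K(p)$, so by hypothesis the orbit of $w$ eventually lies in $T\subset S$. One then shows $U'\subset A_p(\infty)^c$ and that the whole cycle of components, hence $U'$, is attracted (for the parabolic case, to the parabolic fixed point; for the attracting case, to the attracting cycle), and in either case the forward orbit of every $z\in U'$ eventually enters $S$ (it converges to a point, and that point's neighbourhood eventually lies in — here is where I'd want the orbit of $z$ to land in $S$: for the attracting case the limit cycle lies in $\bigcup p^m(U')$ which contains $w$ whose orbit is in $T\subset S$, and since $S$ is open the orbit of $z$ enters $S$; the parabolic case is similar with the parabolic point, which lies in $\partial U'$, but one argues via the critical orbit lying in $T$ and a covering/connectedness argument that $U'\subset \bigcup_n p^{-n}(S)$). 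In case (ii), $U'$ is a Siegel disk and there is a critical point $w\in K(p)$ whose orbit is dense in $\partial U'$; by hypothesis that orbit eventually lies in the closed set $T\subset S$, so $\partial U'\subset \overline{T}=T\subset S$ (the orbit's closure contains $\partial U'$, and all but finitely many orbit points are in $T$, so $\partial U'\subset T$), whence $U'\subset \overline{U'}\subset S\cup\partial U'\subset S$, giving $U'\subset S\subset G$ directly.

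The main obstacle I expect is the precise argument in case (i) — the parabolic and attracting cases — that $U'\subset \bigcup_{n=1}^\infty p^{-n}(S)$: one knows the forward orbit of each $z\in U'$ converges to the attracting/parabolic cycle, and one knows the critical orbit eventually sits in $T\subset S$, but to conclude $z\in p^{-n}(S)$ for some $n$ one must connect "the limit point is approached by the critical orbit inside $S$" with "the orbit of $z$ eventually enters $S$." The clean way is: the forward orbit of $z$ accumulates only on the cycle; the cycle lies in $\overline{\bigcup_{m=1}^n p^m(U')}$, and since $S$ is open and contains a tail of the critical orbit, which accumulates on the same cycle, $S$ contains a neighbourhood of (part of) the cycle, so $p^N(z)\in S$ for large $N$, i.e.\ $z\in p^{-N}(S)\subset G$. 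I would need to handle the degenerate possibility that $U'$ is unbounded (then $U'\subset A_p(\infty)=F(p)\setminus F_B(p)$ contributes nothing) to keep the reduction honest, and to double-check compactness of $G$ so that the closure in its definition doesn't enlarge it beyond a bounded set — this is where I'd cite that $p^{-n}(S)$ is bounded uniformly in $n$ because $S$ bounded forces, for $|z|$ large, $|p(z)|$ even larger, so orbits of points far out escape, placing $\bigcup_n p^{-n}(S)$ inside a fixed disk.
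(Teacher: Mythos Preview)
Your overall strategy matches the paper's: dispose of $J(p)$ via \eqref{eq:Jpi}, then treat each $z\in F_B(p)$ by following its orbit into a periodic Fatou component and invoking the classification theorem together with Proposition~\ref{prop:crit}. The paper is considerably more direct, however: rather than first proving $p^{-1}(G)\subset G$ and then reducing preperiodic components to periodic ones, it simply shows, for each $z\in K(p)$, that $p^n(z)\in S$ for some $n$, which immediately gives $z\in p^{-n}(S)\subset G$. This sidesteps all your worries about compactness and boundedness of $G$; none of that is needed. Your final sketch of the attracting/parabolic case is essentially the paper's argument: the critical orbit is eventually in the closed set $T$ and converges to the cycle $C$, so $C\cap T\neq\emptyset$; since $T\subset S$ and $S$ is open, some neighbourhood of a point of $C$ lies in $S$, and the orbit of $z$ (which visits every point of $C$ arbitrarily closely) eventually enters it.

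There is, however, a genuine gap in your Siegel disk argument. You write ``whence $U'\subset \overline{U'}\subset S\cup\partial U'\subset S$'', but the middle inclusion $\overline{U'}\subset S\cup\partial U'$ is equivalent to $U'\subset S$, which is precisely what you are trying to prove --- the reasoning is circular. Knowing only that $\partial U'\subset S$ does \emph{not} imply $U'\subset S$ without the hypothesis that $S$ is simply connected: if $S$ were an annulus containing $\partial U'$, the disk $U'$ need not lie in $S$. The paper's argument here is: the critical orbit is eventually in the closed set $T$ and is dense in $\partial U'$, so $\partial U'\subset T\subset S$; then, precisely because $S$ is simply connected, the bounded region $U'$ whose boundary lies in $S$ must itself lie in $S$. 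This is the one place in the proof where simple connectedness of $S$ is used, and your argument omits it entirely.
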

\begin{proof} That $z\in G$ if $z\in J(p)$ follows from \eqref{eq:Jpi}. If $z\in  F_B(p)$ then, by Theorems \ref{thm:sullivan} and \ref{thm:class}, after a finite number of iterations the orbit of $z$ is in a periodic component of $F(p)$ that is parabolic, part of the domain of attraction of an attracting periodic point, or is a Siegel disk. In the first two cases it follows that $d(p^n(z), C)\to 0$ as $n\to\infty$ for some cycle $C$, but also $d(p^n(w),C)\to 0$ for some critical point $w$ by Proposition \ref{prop:crit}. This last implies that $C\cap T$ is non-empty, and so $p^n(z)\in S$ for some $n$. In the case that the orbit of $z$ is eventually in a Siegel disk then also $p^n(z)\in S$ for some $n$ for, if the orbit of every critical point $w\in K(p)$ is eventually in $T$, it follows that the boundary of every Siegel disk is in $T$, and (as $S$ is simply connected) that every Siegel disk is in $S$.
\end{proof}

\paragraph{Previous upper bounds on $\Sigma$.}
We have noted above that, if $c\in \Ome$ is pseudo-ergodic, then $\Sigma= \spec A_c \subset \overline{W(A_c)} = \overline{\Delta}$, given by \eqref{eq:nr}.
Similarly, the spectrum of $A_c^2$ is contained in the closure of {\em its} numerical range, so that\footnote{Equation \eqref{eq:nr2} is the idea behind higher order numerical ranges; indeed,  where $p$ is the polynomial $p(\lambda) = \lambda^2$, $N_2$ is $\mathrm{Num}(p,A_c)$ in the notation of \cite[p.~278]{LOTS}, so that $N_2$ is a superset of the second order numerical range.}
\begin{equation} \label{eq:nr2}
\Sigma \subset \{\pm \sqrt{z}:z\in \spec(A_c^2)\} \subset
N_2:= \{\pm \sqrt{z}:z\in \overline{W(A_c^2)}\}.
\end{equation}
Hagger \cite{Hagger:NumRange} introduces a new, general method for computing numerical ranges of infinite tridiagonal matrices via the Schur test, which he applies to computing the numerical range of $A_c^2$ (by expressing it as the direct sum of tridiagonal matrices). These calculations show that $\Sigma\subset N_2\subsetneq \overline{\Delta}$; indeed, the calculations in \cite{Hagger:NumRange} imply that $N_2 = \{r\exp(\ri\theta): 0\leq r\leq \rho(\theta), \, 0\leq \theta < 2\pi\}$,
where $\rho\in C(\R)$ is even and periodic with period $\pi/2$, given explicitly on $[-\pi/4,\pi/4]$ by
\begin{equation} \label{eq:nr3}
\rho(\theta) = \left\{\begin{array}{cc}
                     \sqrt{2}, & \pi/6\leq |\theta|\leq \pi/4, \\
                     2/\sqrt{\cos2\theta + \sqrt{3}\ |\sin 2\theta|}, & |\theta| \leq \pi/6.
                                        \end{array}\right.
\end{equation}
By comparison, in polar form,
\begin{eqnarray} \label{eq:nr2a}
W(A_c) = \Delta
 = \{r\re^{\ri \theta}: 0\leq r < 2/(|\cos\theta|+|\sin\theta|),\ 0\leq \theta<2\pi\}.
\end{eqnarray}
Figure \ref{Figure2} includes a visualisation of $\overline{\Delta}$ and $N_2$.

The bound \eqref{eq:nr2}, expressed concretely through \eqref{eq:nr3}, is the sharpest explicit upper bound on $\Sigma$ obtained to date. It implies that $\Sigma$ is not convex since we also know (see \eqref{eq:pi1} below) that $\pm 2$, $\pm 2\ri$, and $\pm 1\pm \ri$ are all in $\Sigma$.

A different family of upper bounds was established in \cite{CCL2} (and see \cite{HengPhD}), expressed in terms of pseudospectra. For a square matrix $A$ of order $n$ and $\epsilon>0$ let $\specn_\epsilon A$ denote the $\epsilon$-pseudospectrum of $A$ (with respect to the 2-norm), i.e., $\specn_\epsilon A := \{\lambda \in \C: \|(A-\lambda I_n)^{-1}\|_2 >\epsilon^{-1}\}$, with the understanding that $\|(A-\lambda I)^{-1}\|_2=\infty$ if $\lambda$ is an eigenvalue, so that $\spec A \subset \spec_\epsilon A$. (Here $\|\cdot\|_2$ is the operator norm of a linear mapping on $\C^n$ equipped with the $2$-norm.) Analogously to \eqref{eq:sigma}, for $\epsilon >0$ and $n\in\N$, let
\begin{equation} \label{eq:sigma2}
\sigma_{n,\epsilon} := \bigcup_{A\in V_n} \specn_\epsilon A,
\end{equation}
which is the union of the pseudospectra of $2^{n-1}$ distinct matrices. Then it is shown in \cite{CCL2} that
\begin{equation} \label{eq:Sigmaconv}
\Sigma^*_n:= \overline{\sigma_{n,\epsilon_n}}\ \searrow\ \Sigma\ \mbox{ as }\ n\to\infty,
\end{equation}
where
$\epsilon_n := 4\sin \theta_n\leq 2\pi/(n+2)$ and $\theta_n$ is the unique solution in $(\pi/(2n+6),\pi/(2n+4)]$ of the equation $2\cos((n+1)\theta) = \cos((n-1)\theta)$.

Clearly, $\{\Sigma^*_n : n\in\N\}$ is a convergent family of upper bounds for $\Sigma$ that is in principle computable; deciding whether  $\lambda\in\Sigma^*_n$ requires only computation of smallest singular values of $n\times n$ matrices (see \cite[(39)]{CCL2}). Explicitly $\Sigma^*_1=2\ovD$, and $\Sigma_n^*$ is plotted for $n=6,12,18$ in \cite{CCL2}. But for these values $\Sigma_n^*\supset \Delta$, and computing $\Sigma_n^*$ for larger $n$ is challenging, requiring computation of the smallest singular value of $2^{n-1}$ matrices of order $n$ to decide whether a particular $\lambda\in \Sigma_n^*$.  Substantial numerical calculations in \cite{CCL2} established that $1.5+0.5\ri\not\in \Sigma^*_{34}$, providing the first proof that $\Sigma$ is a strict subset of $\overline{\Delta}$, this confirmed now by the simple explicit bound \eqref{eq:nr2} and \eqref{eq:nr3}.

\section{Lower Bounds on $\Sigma$ and Symmetries of $\Sigma$ and $\Sigma_\pi$} \label{sec:poly}
Complementing the upper bounds on $\Sigma$ that we have just discussed, lower bounds on $\Sigma$ have been obtained by two methods of argument. The first is that \eqref{eq:spec} tells us that $\spec A_b\subset \Sigma$ for every $b\in \Ome$. In particular this holds in the case when $b$ is periodic, when the spectrum of $A_b$ is given explicitly by Lemmas \ref{lem:spper} and \ref{lem:det}, so that, as observed in the introduction,
$$
\pi_n := \bigcup_{k\in \{\pm 1\}^n} \spec A_k^{\per} \subset \Sigma.
$$
Explicitly \cite[Lemma 2.6]{CCL2}, in particular,
\begin{equation} \label{eq:pi1}
\pi_1 = [-2,2]\cup \ri [-2,2]\ \mbox{ and }\ \pi_2 = \pi_1 \cup \{x\pm \ri x: -1\leq x\leq 1\}.
\end{equation}
In the introduction we have defined $\pi_\infty:= \cup_{n=1}^\infty \pi_n$ and have termed $\Sigma_\pi:=\overline{\pi_\infty}$, also a subset of $\Sigma$ since $\Sigma$ is closed, the periodic part of $\Sigma$. We have also recalled the conjecture of \cite{CCL} that $\Sigma_\pi=\Sigma$. Let
$$
\Pi_n := \bigcup_{m=1}^n \pi_m \subset \pi_\infty \subset \Sigma_\pi \subset \Sigma.
$$
Then it follows from Lemma \ref{eq:setconv} that
\begin{equation} \label{eq:convPin}
\Pi_n \nearrow \Sigma_\pi\ \mbox{ as }\ n\to\infty.
\end{equation}
If, as conjectured, $\Sigma_\pi=\Sigma$, then \eqref{eq:convPin} complements \eqref{eq:Sigmaconv}; together they sandwich $\Sigma$ by convergent sequences of upper ($\Sigma_n^*$) and lower ($\Pi_n$) bounds that can both be computed by calculating eigenvalues of $n\times n$ matrices. Figures \ref{Figure2} and \ref{Figure3}  include visualisations of $\pi_{30}$, indistinguishable by eye from $\Pi_{30}$, but note that the solid appearance of $\pi_{30}$, which is the union of a large but finite number of analytic arcs, is illusory. See \cite{CCL,CCL2} for visualisations of $\pi_{n}$ for a range of $n$, suggestive that the convergence \eqref{eq:convPin} is approximately achieved by $n=30$.

The same method of argument  \eqref{eq:spec} to obtain lower bounds was used in \cite{CCL}, where a special sequence $b\in \Ome$ was constructed with the property that $\spec A_b \supset \ovD$, so that, by \eqref{eq:spec}, $\ovD \subset \Sigma$. The stronger result \eqref{eq:unit}, that this new lower bound on $\Sigma$ is in fact also a subset of $\Sigma_\pi$, was shown in \cite{CWDavies2011}, via
a second method of argument for constructing lower bounds, based on surprising symmetries of $\Sigma$ and $\Sigma_\pi$. We will spell out in a moment these symmetries (one of these described first in \cite{CWDavies2011}, the whole infinite family in \cite{Hagger:symmetries}), which will be both a main subject of study and a main tool for argument in this paper.   But first we note more straightforward but important symmetries. In this lemma and throughout $\overline \lambda$ denotes the complex conjugate of $\lambda\in\C$.

\begin{lem} \label{lem:symeasy} \cite[Lemma 3.4]{CCL2} (and see \cite{HolzOrlZee}, \cite[Lemma 4]{CWDavies2011}). All of $\pi_n$, $\sigma_n$, $\Sigma_\pi$, and $\Sigma$ are invariant with respect to the maps $\lambda\mapsto \ri\lambda$ and $\lambda \mapsto \overline{\lambda}$, and so are invariant under the dihedral symmetry group $D_2$ generated by these two maps.
\end{lem}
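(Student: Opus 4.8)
The plan is to establish all four invariances for $\pi_n$ first, and then to push them through to $\sigma_n$, $\Sigma_\pi$ and $\Sigma$ by the closure and union operations that define the latter sets. For the map $\lambda\mapsto\ri\lambda$: given $k\in\{\pm1\}^n$, consider $\ell=-k$. By Lemma~\ref{lem:pksym} we have $p_\ell(\lambda)=\ri^{-n}p_k(\ri\lambda)$, and since $\ell=-k$ has the opposite parity-product to $k$ ($\prod\ell_j=(-1)^n\prod k_j$), we can track the two cases of \eqref{eq:per}. If $n$ is even then $k$ and $\ell$ have the same type (even/odd), and $p_\ell(\lambda)=p_k(\ri\lambda)$; if $k$ is even, $\spec A_\ell^{\per}=p_\ell^{-1}([-2,2])=\{\lambda:p_k(\ri\lambda)\in[-2,2]\}=\ri^{-1}\,p_k^{-1}([-2,2])=-\ri\,\spec A_k^{\per}$, and similarly when $k$ is odd. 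If $n$ is odd, then $\ri^{-n}=\pm\ri$ and the multiplication by $\ri$ also swaps $[-2,2]$ and $\ri[-2,2]$, which exactly compensates for the swap of type between $k$ and $\ell$; a short case check in each of the four sub-cases (odd/even $\times$ $n\equiv1,3\pmod4$) shows $\spec A_\ell^{\per}=\pm\ri\,\spec A_k^{\per}$ again. In every case the set $\spec A_\ell^{\per}$ is $\spec A_k^{\per}$ rotated by a power of $\ri$; taking the union over all $k\in\{\pm1\}^n$ (which is stable under $k\mapsto -k$) gives $\ri\,\pi_n=\pi_n$.

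For the conjugation map $\lambda\mapsto\overline\lambda$: the key observation is that $p_k$ has real coefficients, being a monic polynomial built from the $q$'s through the integer recursion \eqref{eq:qk} with coefficients $k_j\in\{\pm1\}\subset\R$ (Lemmas~\ref{lem:qkbound}, \ref{lem:det}). Hence $p_k(\overline\lambda)=\overline{p_k(\lambda)}$, so $p_k^{-1}(I)$ is invariant under conjugation whenever $I\subset\R$ or $I\subset\ri\R$ is itself conjugation-invariant — and both $[-2,2]$ and $\ri[-2,2]$ are. By \eqref{eq:per} each $\spec A_k^{\per}$ is therefore conjugation-invariant, and so is $\pi_n$. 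This handles $\pi_n$ for both generators, hence under the whole group $D_2$ they generate.

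To transfer to the remaining sets: $\sigma_n$ is the union of eigenvalue sets of the finite matrices $A_k^{(n)}$ in \eqref{eq:matrix2}; these matrices have real entries, so their characteristic polynomials have real coefficients and $\sigma_n$ is conjugation-invariant. For $\lambda\mapsto\ri\lambda$ one uses the analogous finite-matrix gauge transformation (diagonal conjugation by $\diag(1,\ri,\ri^2,\dots)$ up to the sign-flip $k\mapsto -k$), exactly as in the periodic case, to see that $\ri\,\sigma_n=\sigma_n$. Finally, $\Sigma_\pi=\overline{\bigcup_n\pi_n}$ and $\Sigma=\bigcup_{b\in\Ome}\spec A_b$; since each rigid motion $\lambda\mapsto\ri\lambda$ and $\lambda\mapsto\overline\lambda$ is a homeomorphism of $\C$, it commutes with taking unions and closures, so invariance of every $\pi_n$ (resp.\ of each $\spec A_b$, which follows by applying the gauge/conjugation arguments directly to the bi-infinite operator) passes to $\Sigma_\pi$ and $\Sigma$.

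I expect the only real bookkeeping obstacle to be the $n$-odd case of the $\lambda\mapsto\ri\lambda$ invariance for $\pi_n$, where one must verify that the factor $\ri^{-n}$ in Lemma~\ref{lem:pksym} and the interchange of $[-2,2]$ with $\ri[-2,2]$ in \eqref{eq:per} conspire correctly; everything else is an immediate consequence of real-coefficientness and of homeomorphisms commuting with closure and union. Since this is Lemma~3.4 of \cite{CCL2}, it is legitimate simply to cite that reference and indicate the gauge-transformation idea, rather than grind through all four sub-cases.
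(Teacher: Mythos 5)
Your argument is correct. The paper itself gives no proof of this lemma — it is stated purely by citation to \cite[Lemma 3.4]{CCL2} — and your reconstruction (conjugation invariance from the real coefficients of $p_k$ and of the matrix entries, $\ri$-invariance from the sign-flip $k\mapsto -k$ together with the diagonal gauge transformation, then passage to $\Sigma_\pi$ and $\Sigma$ via unions and closures under these homeomorphisms) is exactly the standard argument of the cited references. The only cosmetic slip is that for $n$ even you have $p_{-k}(\lambda)=\pm p_k(\ri\lambda)$ rather than $p_k(\ri\lambda)$, but since $[-2,2]$ and $\ri[-2,2]$ are symmetric under negation this is harmless, as your case analysis implicitly uses.
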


To expand on the brief discussion in the introduction, \cite{Hagger:symmetries} proves the existence of an infinite set $\cS$ of monic polynomials of degree $\geq 2$, this set defined constructively in the following theorem, such that the elements $p\in \cS$ are symmetries of $\pi_\infty$ and $\Sigma$ in the sense that \eqref{eq:spsym} below holds.

\begin{thm} \label{thm:cS} \cite{Hagger:symmetries}
Let $\cS$ denote the set of those polynomials $p_k$,  defined by \eqref{eq:pdef2}, with $k=(k_1,...,k_n)\in \{\pm 1\}^n$ for some $n\geq 2$, for which it holds that: (i) $k_{n-1}=-1$ and $k_n=1$; (ii) $p_k=p_{\widehat k}$, where $\widehat k\in \{\pm 1\}^n$ is the vector identical to $k$ but with the last two entries interchanged, so that $\widehat k_{n-1}=1$ and $\widehat k_n = -1$. Then
\begin{equation} \label{eq:spsym}
\Sigma \subset p(\Sigma) \mbox{ and } p^{-1}(\pi_\infty) \subset \pi_\infty,
\end{equation}
for all $p\in \cS$.
\end{thm}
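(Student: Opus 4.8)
The plan is to establish, for each admissible $k=(k_1,\ldots,k_n)\in\{\pm1\}^n$ (one satisfying (i) and (ii)), a single ``inflation'' construction from which both conclusions follow. Given a sequence $b$ --- periodic of some period $m$, or bi-infinite --- I would build $c$ by replacing each letter $b_j$ by a block of length $n$ that is either $k$ or $\widehat k$, the choice of block, and of the two ``connector'' entries in positions $n-1,n$ of each block (this being where hypothesis (i) is used), being governed by $b$ so that the letter induced at position $j$ is a fixed local function of adjacent blocks; in the periodic case one also aligns the blocking and, if necessary, replaces $b$ by the doubled word $(b,b)$, whose entries have product $1$ and which satisfies $\spec A^{\per}_{(b,b)}=\spec A^{\per}_b$, so that the resulting $c$ is even. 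The crux is to show that, with respect to the decomposition $\ell^2(\Z)=\bigoplus_{r=0}^{n-1}\ell^2(n\Z+r)$ into residue classes mod $n$, the band operator $p_k(A_c)$ is block-diagonal, with its $n\Z$-block equal to $A_b$; equivalently, in the periodic case, the polynomial identity $p_c=p_b\circ p_k$, with $p_c,p_b$ as in \eqref{eq:pdef2}. I would prove this by a block-Laplace expansion of the determinant in Lemma~\ref{lem:spper} along the $n$-periodic block structure of $c$, collapsing the ``internal'' factor contributed by each block to $p_k$ --- which is precisely where hypothesis (ii), $p_k=p_{\widehat k}$, enters, ensuring this internal contribution is $p_k$ no matter which of $k,\widehat k$ the block is or how its connectors are set --- and reducing what remains to the determinant attached to $A_b$; the recursions \eqref{eq:qk}, \eqref{eq:pk2}, \eqref{eq:pk2a} and the symmetries of Lemmas~\ref{lem:qksym} and \ref{lem:pksym} carry the computation.

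Granting this, $p^{-1}(\pi_\infty)\subset\pi_\infty$ is immediate. For periodic $b$ (which we may take even, as above), Lemma~\ref{lem:spper} and Lemma~\ref{lem:det} give that $\spec A^{\per}_c$ is the \emph{full} preimage $p_c^{-1}([-2,2])$; since $p_c=p_b\circ p_k$ this equals $p_k^{-1}\big(p_b^{-1}([-2,2])\big)=p_k^{-1}(\spec A^{\per}_b)$ by \eqref{eq:per}. Hence $p_k^{-1}(\spec A^{\per}_b)=\spec A^{\per}_c\subset\pi_\infty$, and taking the union over all periodic $b$ gives $p_k^{-1}(\pi_\infty)=\bigcup_b p_k^{-1}(\spec A^{\per}_b)\subset\pi_\infty$. (Taking closures then also yields $p_k^{-1}(\Sigma_\pi)\subset\Sigma_\pi$, using that $p_k^{-1}(\overline S)=\overline{p_k^{-1}(S)}$ for the non-constant polynomial $p_k$.)

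For $\Sigma\subset p(\Sigma)$ I would use the bi-infinite version of the construction. Pick $c$ pseudo-ergodic, so $\spec A_c=\Sigma$ by \eqref{eq:spec}, and let $c'\in\Omega$ be the inflation of $c$. Then $\spec A_{c'}\subset\Sigma$, again by \eqref{eq:spec}; and $\ell^2(n\Z)$ is a reducing subspace for $p_k(A_{c'})$ on which this operator acts as $A_c$, so, by the spectral mapping theorem,
$$
\Sigma=\spec A_c\subset\spec\big(p_k(A_{c'})\big)=p_k\big(\spec A_{c'}\big)\subset p_k(\Sigma),
$$
as required. (If one wishes to avoid the operator-level block-diagonalization here, one can instead apply the periodic construction to a sequence of periodic ``windows'' of $c'$ and pass to the limit, since $A_{c'}$ is band-dominated and the associated set sequences converge in the Hausdorff metric; the direct argument above is cleaner.)

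The single hard point is the block-diagonalization / composition identity $p_c=p_b\circ p_k$: showing that inflation by the pattern $k,\widehat k$ makes $p_k(A_c)$ collapse onto $A_b$, and tracking precisely how hypotheses (i) and (ii) enter the connector bookkeeping and the internal-factor cancellation. Everything else --- the evenness/doubling adjustment, the full-preimage property of periodic spectra coming from Lemmas~\ref{lem:spper} and \ref{lem:det}, and the spectral-mapping deduction of $\Sigma\subset p(\Sigma)$ --- is routine once that identity is available.
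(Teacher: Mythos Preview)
This theorem is not proved in the present paper; it is quoted from \cite{Hagger:symmetries}. Your sketch is essentially Hagger's argument: an inflation construction producing, from each $b$, a sequence $c$ built out of blocks $k$ and $\widehat k$, together with the key identity linking $A_b$ to $p_k(A_c)$, from which both conclusions are read off. You have correctly isolated the single substantive step---showing that the inflation makes $p_k(A_c)$ decouple with $A_b$ appearing as a summand (equivalently, in the periodic determinantal picture, that $p_c = p_b \circ p_k$)---and correctly located where hypotheses (i) and (ii) enter.

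One point of care. You present the operator-level block-diagonalization of $p_k(A_{c'})$ and the determinantal identity $p_c = p_b \circ p_k$ as equivalent, but they are not interchangeable. The polynomial identity (a statement about the symbols of periodic operators) delivers $p_k^{-1}(\pi_\infty)\subset\pi_\infty$ directly via \eqref{eq:per}, but by itself it does not furnish the direct-sum decomposition you invoke for $\Sigma\subset p_k(\Sigma)$; for that you need the stronger operator-level claim that $p_k(A_{c'})$ genuinely splits with $A_c$ as a summand (reducing, not merely invariant, subspace). So either establish the operator splitting directly---this is the route Hagger takes---and obtain the polynomial identity as a byproduct, or prove the polynomial identity by your block-Laplace expansion and handle $\Sigma\subset p_k(\Sigma)$ separately via the limit-operator approximation you mention parenthetically. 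Both routes are viable; just do not treat the two formulations as a single lemma.
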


\noindent We will call $\cS$ {\em Hagger's set of polynomial symmetries for $\Sigma$.}

We remark that if $p\in \cS$ then it follows from \eqref{eq:spsym}, by taking closures and recalling that $p$ is continuous, that also
\begin{equation} \label{eq:pply}
p^{-1}(\Sigma_\pi) \subset \Sigma_\pi \mbox{ and } p^{-1}(\intt(\Sigma_\pi)) \subset \intt(\Sigma_\pi).
\end{equation}
We note also that $p^{-1}(\pi_\infty) \subset \pi_\infty$ implies that $\pi_\infty \subset p(\pi_\infty)$, but not vice versa, and that $\Sigma \subset p(\Sigma)$ iff
$$
p^{-1}(\{\lambda\}) \cap \Sigma \neq \emptyset,\ \mbox{ for all } \lambda \in \Sigma.
$$
Further, we note that it was shown earlier in \cite{CWDavies2011} that \eqref{eq:spsym} holds for the particular case $p(\lambda)=\lambda^2$ (this the only element of $\cS$ of degree 2, see Table \ref{table1});  in \cite{CWDavies2011} it was also shown, as an immediate consequence of \eqref{eq:spsym} and Lemma \ref{lem:symeasy}, that
$$
p^{-1}(\Sigma) \subset \Sigma,
$$
for $p(\lambda)=\lambda^2$.
Whether this last inclusion holds in fact for all $p\in \cS$ is an open problem.

Our first result is a much more explicit characterisation of $\cS$.

\begin{prop} \label{prop:eqchar}
The set $\cS$ is given by $\cS=\{p_k:k\in \cK\}$, where $\cK$ consists of those vectors $k=(k_1,...,k_n)\in \{\pm 1\}^n$ with $n\geq 2$, for which:
(i) $k_{n-1} = -1$ and $k_n = 1$; and (ii) $n=2$, or $n\geq 3$ and $k_j = k_{n-j-1}$, for $1 \leq j \leq n-2$, so that $(k_1,..., k_{n-2})$ is a palindrome. Moreover, if $k\in \cK$, then
\begin{equation} \label{p_k}
p_k(\lambda) = \lambda q_{k(1:n-2)}(\lambda).
\end{equation}
\end{prop}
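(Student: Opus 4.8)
The plan is as follows. Note first that the two definitions of $\cS$ --- Hagger's in Theorem~\ref{thm:cS} and the proposed one --- impose exactly the same condition (i), so the whole proposition reduces to showing that, for $k=(k_1,\ldots,k_n)\in\{\pm1\}^n$ with $n\geq2$, $k_{n-1}=-1$, $k_n=1$, Hagger's condition (ii) ($p_k=p_{\widehat k}$) is equivalent to: $n=2$, or $(k_1,\ldots,k_{n-2})$ is a palindrome. I would begin by computing $p_k-p_{\widehat k}$ explicitly. Since $\widehat k$ differs from $k$ only in positions $n-1$ and $n$, the subvectors $k(1:n-2)$ and $k(2:n-2)$ are common to both; using \eqref{eq:pk2} (with $k_n=1$, $\widehat k_n=-1$) and one application of the recursion \eqref{eq:qk} to $q_{k(1:n-1)}$ and $q_{\widehat k(1:n-1)}$, which differ only in their last subdiagonal entry ($-1$ versus $+1$), I expect to obtain
\[
p_k-p_{\widehat k}=2\bigl(q_{k(1:n-3)}-q_{k(2:n-2)}\bigr)
\]
for every $n\geq2$. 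For $n=2$ and $n=3$ the right-hand side is identically zero by the conventions \eqref{eq:ij}, matching the fact that the palindrome condition on $(k_1,\ldots,k_{n-2})$ is then vacuous, so the real work is the case $n\geq4$, where the question becomes whether $q_{k(1:n-3)}=q_{k(2:n-2)}$.

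The crux is an auxiliary sliding-window lemma that I would isolate: for $t=(t_1,\ldots,t_L)\in\{\pm1\}^L$, one has $q_{(t_1,\ldots,t_{L-1})}=q_{(t_2,\ldots,t_L)}$ if and only if $t$ is a palindrome. The forward implication (palindrome $\Rightarrow$ equality) is immediate from Lemma~\ref{lem:qksym}, since when $t$ is a palindrome $(t_2,\ldots,t_L)$ is the reversal of $(t_1,\ldots,t_{L-1})$ and $q$ is reversal-invariant. For the converse I would induct on $L$ (the cases $L\leq2$ being immediate from $q_{(t_1)}(\lambda)=\lambda^2-t_1$). For $L\geq3$, expand $q_{(t_1,\ldots,t_{L-1})}$ along its first row and $q_{(t_2,\ldots,t_L)}$ along its last row using \eqref{eq:qk}: both expansions contain the term $\lambda\,q_{(t_2,\ldots,t_{L-1})}$, so the hypothesis collapses to $t_1\,q_{(t_3,\ldots,t_{L-1})}=t_L\,q_{(t_2,\ldots,t_{L-2})}$. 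The two polynomials here are monic of the same degree (Lemma~\ref{lem:qkbound}, or \eqref{eq:ij} when $L=3$), so equating leading coefficients forces $t_1=t_L$, after which the identity reads $q_{(t_2,\ldots,t_{L-2})}=q_{(t_3,\ldots,t_{L-1})}$ --- precisely the sliding-window equation for the length-$(L-2)$ vector $(t_2,\ldots,t_{L-1})$. The induction hypothesis makes $(t_2,\ldots,t_{L-1})$ a palindrome, and together with $t_1=t_L$ this makes $t$ a palindrome. Applying this with $t=(k_1,\ldots,k_{n-2})$ identifies the index set of Hagger's $\cS$ with $\cK$, proving $\cS=\{p_k:k\in\cK\}$.

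Finally, for the formula \eqref{p_k}: for $k\in\cK$, using \eqref{eq:pk2} with $k_n=1$ and then \eqref{eq:qk} on $q_{k(1:n-1)}$ with $k_{n-1}=-1$,
\[
p_k(\lambda)=q_{k(1:n-1)}(\lambda)-q_{k(2:n-2)}(\lambda)=\lambda\,q_{k(1:n-2)}(\lambda)+q_{k(1:n-3)}(\lambda)-q_{k(2:n-2)}(\lambda),
\]
and since $k\in\cK$ the palindrome property (or \eqref{eq:ij} when $n\leq3$) gives $q_{k(1:n-3)}=q_{k(2:n-2)}$, leaving $p_k(\lambda)=\lambda\,q_{k(1:n-2)}(\lambda)$. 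I expect the only genuine difficulty to be the converse half of the sliding-window lemma. It cannot be deduced from injectivity of $k\mapsto q_k$, which fails even modulo reversal --- for example $q_{(1,-1,-1,1)}=q_{(-1,1,1,-1)}=\lambda^5-\lambda$, although these two vectors are not reversals of one another --- so what rescues the argument is that the two vectors being compared are overlapping windows of a single sequence, and the leading-coefficient comparison above is exactly the device that converts this overlap into the inductive step. The remainder is routine bookkeeping with \eqref{eq:qk} and the degenerate conventions \eqref{eq:ij} for short subvectors, together with a direct check of the small cases $n=2,3$.
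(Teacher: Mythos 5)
Your proposal is correct and follows essentially the same route as the paper's proof: the same computation $p_k-p_{\widehat k}=2\bigl(q_{k(1:n-3)}-q_{k(2:n-2)}\bigr)$, and the same inductive device of equating leading coefficients of two monic polynomials of equal degree to peel off $k_j=k_{n-j-1}$ one layer at a time. The only difference is organizational: you extract the inward induction as a standalone sliding-window lemma with induction on the window length, whereas the paper runs the same induction inline on the index $j$.
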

\begin{proof}
It is clear from Theorem \ref{thm:cS} that what we have to prove is that, if $k\in \{\pm 1\}^n$ with $n\geq 2$ and $k_{n-1}=-1$, $k_n=1$, then $p_k=p_{\widehat k}$ if $n=2$ or 3; further, if $n\geq 4$, then $p_k=p_{\widehat k}$ iff $(k_1,...,k_{n-2})$ is a palindrome.

If $k\in \{\pm 1\}^n$ with $n\geq 2$ and $k_{n-1}=-1$, $k_n=1$, then, from \eqref{eq:pk2} and \eqref{eq:qk},
\begin{eqnarray*}
p_k(\lambda) &=& q_{k(1:n-1)}(\lambda) - k_n q_{k(2:n-2)}(\lambda)\\
& = & \lambda q_{k(1:n-2)}(\lambda) + q_{k(1:n-3)}(\lambda) - q_{k(2:n-2)}(\lambda).
\end{eqnarray*}
Thus, if $n=2$ or 3, or $n\geq 4$ and $(k_1,...,k_{n-2})$ is a palindrome, $p_k(\lambda) =  \lambda q_{k(1:n-2)}(\lambda)$ since $q_{k(1:n-3)}(\lambda) = q_{k(2:n-2)}(\lambda)$, this a consequence of the definitions \eqref{eq:ij} in the cases $n=2$ and 3, of Lemma \ref{lem:qksym} in the case $n\geq 4$. Similarly, $p_{\widehat k}(\lambda) =  \lambda q_{k(1:n-2)}(\lambda)$, so that $p_k=p_{\widehat k}$.

Conversely, assume that $k\in \{\pm 1\}^n$ with $n\geq 4$, $k_{n-1}=-1$, $k_n=1$, and $p_k = p_{\widehat{k}}$. To show that $(k_1,...,k_{n-2})$ is a palindrome we need to show that $k_j=k_{n-j-1}$, for $1\leq j\leq(n-2)/2$. Using \eqref{eq:pk2} and then \eqref{eq:qk}, we see that
\begin{eqnarray*}
0 = p_k(\lambda) - p_{\widehat{k}}(\lambda)& =& q_{k(1:n-1)}(\lambda) - q_{\widehat k(1:n-1)}(\lambda) - 2q_{k(2:n-2)}(\lambda)\\
 &=& 2q_{k(1:n-3)}(\lambda) - 2q_{k(2:n-2)}(\lambda).
\end{eqnarray*}
Thus $q_{k(1:n-3)}= q_{k(2:n-2)}$. But, if $q_{k(j:n-j-2)} = q_{k(j+1:n-j-1)}$ and $1\leq j \leq (n-2)/2$, then, applying \eqref{eq:qk},
\begin{eqnarray*}
0&=&q_{k(j:n-j-2)}(\lambda) - q_{k(j+1:n-j-1)}(\lambda)\\
 &=& \lambda q_{k(j+1:n-j-2)}(\lambda)-k_jq_{k(j+2:n-j-2)}(\lambda)\\
 & & \hspace{4ex} - (\lambda q_{k(j+1:n-j-2)}(\lambda)-k_{n-j-1}q_{k(j+1:n-j-3)}(\lambda))\\
 &=& -k_jq_{k(j+2:n-j-2)}(\lambda)+k_{n-j-1}q_{k(j+1:n-j-3)}(\lambda).
\end{eqnarray*}
As this holds for all $\lambda$ and, by Lemma \ref{lem:qkbound} and \eqref{eq:ij}, $q_{k(j+2:n-j-2)}$ and $q_{k(j+1:n-j-3)}$ are both monic polynomials of degree $n-2j-2$, it follows first that $k_{j}=k_{n-j-1}$ and then that $q_{k(j+1:n-j-3)}=q_{k(j+2:n-j-2)}$. Thus that $k_j=k_{n-j-1}$ for $1\leq j\leq(n-2)/2$ follows by induction on $j$.
\end{proof}

The following corollary is immediate from \eqref{p_k} and Lemma \ref{lem:qkbound}.
\begin{cor} \label{cor:small}
Suppose that $n\geq 2$, $k\in \{\pm 1\}^n$, and $p_k\in \cS$. Then, as $\lambda\to 0$, $p_k(\lambda) =\pm\lambda + O(\lambda^3)$ if $n$ is odd, while $p_k(\lambda) = O(\lambda^2)$ if $n$ is even.
\end{cor}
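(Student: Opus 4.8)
The plan is to deduce everything directly from the factorisation \eqref{p_k} supplied by Proposition \ref{prop:eqchar}: if $p_k\in\cS$ with $k\in\{\pm1\}^n$, then $p_k(\lambda) = \lambda\, q_{k(1:n-2)}(\lambda)$, so the behaviour of $p_k$ near $0$ is governed entirely by the value and the parity of $q_{k(1:n-2)}$ at the origin. First I would record, for $n\geq 3$, that the string $k(1:n-2)=(k_1,\dots,k_{n-2})$ has length $n-2\geq 1$, so Lemma \ref{lem:qkbound} applies to it directly.

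If $n$ is odd, then $n-2$ is odd, and Lemma \ref{lem:qkbound} gives that $q_{k(1:n-2)}$ is an even polynomial with $q_{k(1:n-2)}(0)=\pm1$; being even, $q_{k(1:n-2)}(\lambda)=\pm1+O(\lambda^2)$ as $\lambda\to0$, and multiplying by $\lambda$ yields $p_k(\lambda)=\pm\lambda+O(\lambda^3)$. If $n$ is even and $\geq 4$, then $n-2$ is even, so $q_{k(1:n-2)}$ is an odd polynomial with $q_{k(1:n-2)}(0)=0$; an odd polynomial vanishing at $0$ is $O(\lambda)$, so $p_k(\lambda)=\lambda\, q_{k(1:n-2)}(\lambda)=O(\lambda^2)$. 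The remaining case is $n=2$, for which $k(1:0)$ is the empty string and one appeals instead to the convention \eqref{eq:ij}: there $q_{k(1:0)}(\lambda)=\lambda$, whence $p_k(\lambda)=\lambda^2=O(\lambda^2)$, consistent with $n$ even. (This is also the only element of $\cS$ of degree $2$.)

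I do not expect any real obstacle here: the statement is an immediate consequence of \eqref{p_k} together with the parity-and-value-at-zero information in Lemma \ref{lem:qkbound}. The one point that needs a moment's attention, rather than any work, is the degenerate short string $k(1:n-2)$ when $n\in\{2,3\}$ --- for $n=3$ it has length $1$ and is still covered by Lemma \ref{lem:qkbound}, while for $n=2$ it is empty and one must instead quote the conventions \eqref{eq:ij}; both are trivial to verify directly, as above.
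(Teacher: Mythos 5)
Your proof is correct and is exactly the argument the paper intends: the corollary is stated there as "immediate from \eqref{p_k} and Lemma \ref{lem:qkbound}", which is precisely the factorisation-plus-parity reasoning you spell out, including the correct handling of the degenerate strings for $n=2$ (via the conventions \eqref{eq:ij}) and $n=3$.
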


Let us denote by $P_m$ the polynomial $p_k$ when $k$ has length $m\ge 2$, $k_{m-1}=-1$, $k_m=1$, and all other entries are $1$'s. It is convenient also to define $P_1(\lambda)=\lambda$. Clearly, as a consequence of the above proposition, $P_m\in \cS$ for $m\geq 2$ (that these particular polynomials are in $\cS$ was observed earlier in \cite{Hagger:symmetries}). We will write down shortly an explicit formula for $P_m$ in terms of Chebyshev polynomials of the 2nd kind. Recall that $U_n(x)$, the Chebychev polynomial of the 2nd kind of degree $n$, is defined by \cite{AS} $U_0(x):=1$, $U_1(x):=2x$, and $U_{n+1}(x) := 2xU_n(x)-U_{n-1}(x)$, for $n\in\N$.
\begin{lem} \label{lem:cheb}
For $m\in \N$, $P_m(\lambda) = \lambda U_{m-1}(\lambda/2)$.
\end{lem}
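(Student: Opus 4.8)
The plan is to prove $P_m(\lambda) = \lambda U_{m-1}(\lambda/2)$ by induction on $m$, exploiting the three-term recurrence for both sides. First I would observe that, by Proposition \ref{prop:eqchar} applied to the defining vector $k$ of $P_m$ (length $m$, with $k_{m-1}=-1$, $k_m=1$, all other entries $1$), we have $P_m(\lambda) = \lambda q_{k(1:m-2)}(\lambda)$, where $k(1:m-2) = (1,1,\dots,1)$ is the all-ones vector of length $m-2$. So it suffices to prove that $q_{(1,\dots,1)}(\lambda) = U_{m-1}(\lambda/2)$ when the all-ones vector has length $m-2$; equivalently, writing $D_j$ for the value of $q_k$ when $k$ is the all-ones vector of length $j$ (so $D_j$ is a monic polynomial of degree $j+1$ by Lemma \ref{lem:qkbound}), I need $D_j(\lambda) = U_{j+1}(\lambda/2)$ for all $j \ge -1$ (interpreting the degenerate cases via \eqref{eq:ij}), which then gives $P_m(\lambda) = \lambda D_{m-2}(\lambda) = \lambda U_{m-1}(\lambda/2)$.

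Next I would set up the induction. For the base cases: from \eqref{eq:ij}, $q$ of the empty vector ($i-j=1$) equals $\lambda$, matching $U_1(\lambda/2) = \lambda$, and $q$ of the "$-1$-length" vector ($i-j=2$) equals $1$, matching $U_0(\lambda/2) = 1$. For the inductive step, I apply the recurrence \eqref{eq:qk}: expanding $q_k$ by the last row with $k$ the all-ones vector of length $j$ gives $D_j(\lambda) = \lambda D_{j-1}(\lambda) - D_{j-2}(\lambda)$. This is exactly the Chebyshev recurrence $U_{n+1}(x) = 2x U_n(x) - U_{n-1}(x)$ with $x = \lambda/2$ and $n = j$: indeed $\lambda D_{j-1} - D_{j-2} = 2(\lambda/2)U_j(\lambda/2) - U_{j-1}(\lambda/2) = U_{j+1}(\lambda/2)$, using the inductive hypotheses $D_{j-1} = U_j(\lambda/2)$ and $D_{j-2} = U_{j-1}(\lambda/2)$. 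So the induction closes immediately. Finally, $P_1(\lambda) = \lambda = \lambda U_0(\lambda/2)$ handles $m=1$ directly by the definition $P_1(\lambda) = \lambda$.

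There is essentially no hard part here: the whole proof is the observation that the determinantal recurrence \eqref{eq:qk} specialised to the all-ones vector is the defining recurrence of the $U_n$ after the substitution $x \mapsto \lambda/2$, together with matching base cases. The only minor care needed is bookkeeping of the index shift (a length-$j$ all-ones vector produces $U_{j+1}(\lambda/2)$, so the length-$(m-2)$ vector $k(1:m-2)$ produces $U_{m-1}(\lambda/2)$) and the handling of the degenerate short vectors through \eqref{eq:ij}. I would write this up compactly as a short induction, citing Proposition \ref{prop:eqchar} for \eqref{p_k}, Lemma \ref{lem:qkbound} for the degree/monicity used implicitly, and \eqref{eq:qk} and \eqref{eq:ij} for the recurrence and base values.
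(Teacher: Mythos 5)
Your proof is correct and follows exactly the route the paper intends: the paper's proof is the one-line remark that the lemma ``follows easily by induction from \eqref{p_k} and \eqref{eq:qk}'', and your argument is precisely that induction spelled out, with the right base cases from \eqref{eq:ij} and the correct index shift identifying the length-$j$ all-ones determinant with $U_{j+1}(\lambda/2)$.
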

\begin{proof}
This follows easily by induction from \eqref{p_k} and \eqref{eq:qk}.
\end{proof}
We note that, using the standard trigonometric representations for the Chebychev polynomials \cite{AS}, for $m\in \N$,
\begin{equation} \label{eq:theta}
P_m(2 \cos \theta) = 2 \cos\theta U_{m-1}(\cos\theta) = 2\cot\theta \sin m\theta =: r_m(\theta).
\end{equation}
A similar representation in terms of hyperbolic functions can be given for the polynomial $p_k$ when $k$ has length $2m-1$ and $k_j = (-1)^j$; we denote this polynomial by $Q_m$. Clearly, for $m\geq 2$, $Q_m\in \cS$ by Proposition \ref{prop:eqchar}, and $Q_m$ is an odd function by Lemma \ref{lem:det}.  The proof of the following lemma, like that of Lemma \ref{lem:cheb}, is a straightforward induction that we leave to the reader.

\begin{lem} \label{lem:cheb2}
$Q_1(\lambda) = \lambda$, $Q_2(\lambda) = \lambda^3 + \lambda$, and $Q_{m+1}(\lambda) = \lambda^2 Q_m(\lambda) + Q_{m-1}(\lambda)$, for $m\geq 2$. Moreover, for $m\in \N$ and $\theta\geq 0$,
\[Q_m\left(\sqrt{2\sinh \theta}\right) = \begin{cases} \sqrt{2\sinh\theta}\ \displaystyle{\frac{\sinh(m\theta) + \cosh((m-1)\theta)}{\cosh\theta}}, & \text{if $m$ is even,} \\ \sqrt{2\sinh \theta}\ \displaystyle{\frac{\cosh(m\theta) + \sinh((m-1)\theta)}{\cosh \theta}}, & \text{if $m$ is odd.}\end{cases}\]
\end{lem}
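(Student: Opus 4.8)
The plan is to prove the three-term recursion first, and then derive the hyperbolic closed form by a second, routine induction.

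\emph{Reducing $Q_m$ to a single determinant.} For $r\in\N$ let $a_r=(-1,1,-1,\dots)\in\{\pm1\}^r$ be the alternating string of length $r$ starting with $-1$, and set $q_{a_0}(\lambda):=\lambda$, $q_{a_{-1}}(\lambda):=1$, in accordance with \eqref{eq:ij}. By definition $Q_m=p_k$, where $k=(k_1,\dots,k_{2m-1})$ with $k_j=(-1)^j$. For $m\ge2$ this $k$ is a cyclic permutation of $\ell:=(k_3,k_4,\dots,k_{2m-1},k_1,k_2)$, and $\ell\in\cK$: indeed $\ell_{2m-2}=k_1=-1$ and $\ell_{2m-1}=k_2=1$, while $(\ell_1,\dots,\ell_{2m-3})=(k_3,\dots,k_{2m-1})=a_{2m-3}$ is a palindrome, being an alternating string of odd length. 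By the cyclic invariance in Lemma \ref{lem:pksym} together with \eqref{p_k} this gives
$$
Q_m(\lambda)=p_\ell(\lambda)=\lambda\,q_{a_{2m-3}}(\lambda),\qquad m\ge2,
$$
and $Q_1(\lambda)=p_{(-1)}(\lambda)=\lambda=\lambda\,q_{a_{-1}}(\lambda)$ by a direct evaluation of \eqref{eq:pk2}, so in fact $Q_m=\lambda\,q_{a_{2m-3}}$ for all $m\ge1$.

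\emph{The recursion.} I would next show that $q_{a_r}(\lambda)=\lambda^2\,q_{a_{r-2}}(\lambda)+q_{a_{r-4}}(\lambda)$ for every odd $r\ge3$. Applying \eqref{eq:qk} (expansion along the last row) repeatedly to cancel the truncations of intermediate length yields, for any $k\in\{\pm1\}^r$,
$$
q_k(\lambda)=\bigl(\lambda^2-k_r-k_{r-1}\bigr)\,q_{k(1:r-2)}(\lambda)-k_{r-1}k_{r-2}\,q_{k(1:r-4)}(\lambda);
$$
with $k=a_r$ and $r$ odd the last three entries of $a_r$ are $-1,1,-1$, so the bracket collapses to $\lambda^2$ and the second coefficient to $1$, while $a_r(1:r-2)=a_{r-2}$ and $a_r(1:r-4)=a_{r-4}$ since translating an alternating string by an even amount changes nothing. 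Putting $r=2m-1$ and multiplying by $\lambda$,
$$
Q_{m+1}(\lambda)=\lambda\,q_{a_{2m-1}}(\lambda)=\lambda^2\bigl(\lambda\,q_{a_{2m-3}}(\lambda)\bigr)+\lambda\,q_{a_{2m-5}}(\lambda)=\lambda^2Q_m(\lambda)+Q_{m-1}(\lambda),\qquad m\ge2,
$$
and the initial values $Q_1(\lambda)=\lambda$ and $Q_2(\lambda)=\lambda\,q_{a_1}(\lambda)=\lambda(\lambda^2+1)=\lambda^3+\lambda$ are read off. (Equivalently, one may iterate \eqref{eq:qk} while tracking the $q$-polynomials of both families of alternating strings --- those beginning with $-1$ and those beginning with $+1$ --- and eliminate the second family.)

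\emph{The closed form.} Finally I would prove the displayed formula by induction on $m$. Write $\lambda=\sqrt{2\sinh\theta}$ and $x:=\lambda^2=2\sinh\theta$, and put $R_m(\theta):=Q_m(\lambda)/\lambda$, so that $R_{m+1}=xR_m+R_{m-1}$ with $R_1=1$, $R_2=x+1$. It suffices to show that $R_m(\theta)$ equals $\bigl(\sinh m\theta+\cosh(m-1)\theta\bigr)/\cosh\theta$ for $m$ even and $\bigl(\cosh m\theta+\sinh(m-1)\theta\bigr)/\cosh\theta$ for $m$ odd; both are checked at $m=1$ (value $1$) and $m=2$ (value $2\sinh\theta+1$), and the inductive step is obtained by substituting the claimed expressions for $R_m$ and $R_{m-1}$ into $xR_m+R_{m-1}$ and applying the product-to-sum identities $2\sinh\theta\cosh\alpha=\sinh(\alpha+\theta)-\sinh(\alpha-\theta)$ and $2\sinh\theta\sinh\alpha=\cosh(\alpha+\theta)-\cosh(\alpha-\theta)$ with $\alpha=m\theta$ and $\alpha=(m-1)\theta$: all terms then cancel except those forming the claimed expression for $R_{m+1}$, and the parity bookkeeping is straightforward. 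At $\theta=0$ both sides vanish, consistently with $Q_m$ being odd (Lemma \ref{lem:det}).

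Everything here is elementary, and no genuine obstacle is expected; this is the ``straightforward induction'' the text defers to the reader. The only delicate points are organizational: choosing the cyclic representative $\ell$ so that \eqref{p_k} applies, keeping track of the parity of $r$ and of the invariance of alternating strings under even translation when iterating \eqref{eq:qk}, and matching the parities of $m$, $m-1$, $m+1$ in the last induction.
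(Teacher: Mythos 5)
Your proof is correct and is precisely the ``straightforward induction from \eqref{p_k} and \eqref{eq:qk}'' that the paper defers to the reader: the double-step identity for $q_{a_r}$, the base cases, and the hyperbolic product-to-sum induction all check out. The one point where you are (rightly) more careful than the paper's terse indication is in first passing to the cyclic permutation $\ell\in\cK$ so that \eqref{p_k} applies, since the defining vector $k_j=(-1)^j$ is not itself in $\cK$; this matches the representatives listed in Table \ref{table1}.
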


The following lemma leads, using Lemmas \ref{lem:cheb} and \ref{lem:cheb2}, to explicit formulae for other polynomials in $\cS$. For example, if  $P_m^*$ denotes the polynomial $p_k$ when $k$ has length $m\ge 2$, $k_{m-1}=-1$, $k_m=1$, and all other entries are $-1$'s, then, by Lemmas \ref{lem:cheb} and \ref{lem:minus},
\begin{equation} \label{eq:PM*}
P_m^*(\lambda) = \ri^{-m} P_m(\ri \lambda) = \ri^{1-m}\lambda U_{m-1}(\ri \lambda/2).
\end{equation}
\begin{lem} \label{lem:minus} If $k\in \{\pm 1\}^n$ and $p_k\in \cS$, then $p_{-k}\in \cS$ and $p_{-k}(\lambda)=\ri^{-n} p_k(\ri\lambda)$.
\end{lem}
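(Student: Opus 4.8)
The plan is to reduce everything to the already-established symmetries of the polynomials $p_k$ under the map $k\mapsto -k$ (Lemma \ref{lem:pksym}) together with the explicit characterisation of $\cS$ via the index set $\cK$ in Proposition \ref{prop:eqchar}. First I would note that the identity $p_{-k}(\lambda)=\ri^{-n}p_k(\ri\lambda)$ is nothing but the second assertion of Lemma \ref{lem:pksym}, valid for every $k\in\{\pm 1\}^n$, so no work is needed there; the only real content is that $p_{-k}\in\cS$ whenever $p_k\in\cS$. By Proposition \ref{prop:eqchar} it suffices to show that some vector $\ell\in\{\pm 1\}^n$ with $p_\ell=p_{-k}$ lies in $\cK$, i.e.\ satisfies $\ell_{n-1}=-1$, $\ell_n=1$, and (for $n\geq 3$) has $(\ell_1,\dots,\ell_{n-2})$ a palindrome.

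Here is the device I would use. Write $k=(k_1,\dots,k_n)\in\cK$, so $k_{n-1}=-1$, $k_n=1$, and $(k_1,\dots,k_{n-2})$ is a palindrome. The vector $-k=(-k_1,\dots,-k_{n-2},1,-1)$ fails condition (i) of $\cK$ (its last two entries are $1,-1$ rather than $-1,1$), but $\widehat{(-k)}$, obtained by swapping the last two entries, is $(-k_1,\dots,-k_{n-2},-1,1)$, which does satisfy (i); and $(-k_1,\dots,-k_{n-2})$ is a palindrome precisely because $(k_1,\dots,k_{n-2})$ is. Hence $\ell:=\widehat{(-k)}\in\cK$, so $p_\ell\in\cS$. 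It remains to check $p_\ell=p_{-k}$: since $-k$ has $(-k)_{n-1}=1$ and $(-k)_n=-1$, condition (ii) in the original definition of $\cS$ (Theorem \ref{thm:cS}), read with the roles of the $+1,-1$ pattern reversed, gives $p_{-k}=p_{\widehat{(-k)}}=p_\ell$. Alternatively, and perhaps more cleanly, one applies the formula \eqref{p_k}: for $k\in\cK$ one has $p_k(\lambda)=\lambda q_{k(1:n-2)}(\lambda)$, and $\ell\in\cK$ likewise gives $p_\ell(\lambda)=\lambda q_{\ell(1:n-2)}(\lambda)=\lambda q_{(-k)(1:n-2)}(\lambda)$; then $q_{(-k)(1:n-2)}(\lambda)=\ri^{-(n-1)}q_{k(1:n-2)}(\ri\lambda)$ by Lemma \ref{lem:qksym}, and a short computation matches this against $\ri^{-n}p_k(\ri\lambda)=\ri^{-n}\cdot\ri\lambda\,q_{k(1:n-2)}(\ri\lambda)=\ri^{1-n}\lambda\,q_{k(1:n-2)}(\ri\lambda)$, confirming $p_\ell=p_{-k}$ and simultaneously re-deriving the stated identity.

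The case $n=2$ should be handled separately but is trivial: then $k=(-1,1)$, $p_k(\lambda)=\lambda^2$, $-k=(1,-1)$, and $\widehat{(-k)}=(-1,1)=k\in\cK$, with $p_{-k}(\lambda)=\lambda^2=\ri^{-2}(\ri\lambda)^2$. I expect no genuine obstacle here — the lemma is essentially a bookkeeping corollary of Proposition \ref{prop:eqchar} and Lemma \ref{lem:pksym}. The only point requiring a little care is making sure the swap $\widehat{\cdot}$ interacts correctly with the palindrome condition, which is immediate since negating every entry of a palindrome yields a palindrome, and the swap touches only positions $n-1$ and $n$, which lie outside the palindromic block $(1{:}n-2)$.
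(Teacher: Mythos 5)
Your proposal is correct and is essentially the paper's own argument: both define $\ell=\widehat{(-k)}=(-k_1,\dots,-k_{n-2},-1,1)$, note that $\ell\in\cK$ by Proposition \ref{prop:eqchar} (the palindrome survives negation), conclude $p_{-k}=p_{\widehat\ell}=p_\ell\in\cS$ from the defining property of $\cS$, and quote Lemma \ref{lem:pksym} for the identity $p_{-k}(\lambda)=\ri^{-n}p_k(\ri\lambda)$. Your alternative verification via \eqref{p_k} and Lemma \ref{lem:qksym} is a correct bonus but not needed.
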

\begin{proof}
Suppose that $k\in \{\pm 1\}^n$ and $p_k\in \cS$. If $n=2$, then $\widehat{k} = -k$ and $p_{-k}=p_{\widehat k} = p_k\in \cS$. If $n\geq 3$, defining $\ell\in \{\pm 1\}^n$ by $\ell_{n-1}=-1$, $\ell_n=1$, and $\ell_j=-k_j$, for $j=1,...,n-2$, $p_\ell\in \cS$ by Proposition \ref{prop:eqchar}, so that $p_{-k}=p_{\widehat \ell}=p_\ell\in \cS$. That $p_{-k}(\lambda)=\ri^{-n} p_k(\ri\lambda)$ comes from Lemma \ref{lem:pksym}.
\end{proof}

We note that Proposition \ref{prop:eqchar} implies that there are precisely $2^{\left\lceil\frac{n}{2}\right\rceil-1}$ vectors of length $n$ in $\cK$, so that there are between 1 and $2^{\left\lceil\frac{n}{2}\right\rceil-1}$ polynomials of degree $n$ in $\cS$, as conjectured in \cite{Hagger:symmetries}. Note, however, that there may be more than one $k \in \cK$ that induce the same polynomial $p_k\in \cS$. For example, $p_k(\lambda) = \lambda^6-\lambda^2$ for $k=(-1,1,1,-1,-1,1)$, and, defining $\ell=(1,-1,-1,1,-1,1)$ and using Lemma \ref{lem:minus}, also
$p_\ell(\lambda) = p_{\widehat \ell}(\lambda)=p_{-k}(\lambda) = -p_k(\ri \lambda) = \lambda^6-\lambda^2$.
In Table \ref{table1} (cf.~\cite{Hagger:symmetries}) we tabulate all the polynomials in $\cS$ of degree $\leq 6$.

\begin{table}
  \centering
  \begin{tabular}{|r|l|}
    \hline
    $k$ & $p_k(\lambda)$ \\
    \hline
    $(-1,1)$ & $\lambda^2= P_2(\lambda)$\\
    \hline
    $(1,-1,1)$ & $\lambda^3-\lambda= P_3(\lambda)$\\
    \hline
    $(-1,-1,1)$ & $\lambda^3+\lambda= Q_2(\lambda)=P_3^*(\lambda)$\\
    \hline
        $(1,1,-1,1)$ & $\lambda^4-2\lambda^2= P_4(\lambda)$\\
    \hline
            $(-1,-1,-1,1)$ & $\lambda^4+2\lambda^2=P_4^*(\lambda)$\\
    \hline
           $(1,1,1,-1,1)$ & $\lambda^5-3\lambda^3+\lambda=P_5(\lambda)$\\
    \hline
           $(1,-1,1,-1,1)$ & $\lambda^5-\lambda^3+\lambda= -\ri Q_3(\ri\lambda)$\\
    \hline
           $(-1,1,-1,-1,1)$ & $\lambda^5+\lambda^3+\lambda=Q_3(\lambda)$\\
    \hline
           $(-1,-1,-1,-1,1)$ & $\lambda^5+3\lambda^3+\lambda=P_5^*(\lambda)$\\
    \hline
              $(1,1,1,1,-1,1)$ & $\lambda^6-4\lambda^4+3\lambda^2=P_6(\lambda)$\\
    \hline
                  $(1,-1,-1,1,-1,1)$ & $\lambda^6-\lambda^2=P_3(P_2(\lambda))$\\
    \hline
              $(-1,-1,-1,-1,-1,1)$ & $\lambda^6+4\lambda^4+3\lambda^2=P_6^*(\lambda)$\\
    \hline
  \end{tabular}
  \caption{The elements $p_k\in \cS$ of degree $\leq 6$.}\label{table1}
\end{table}

If $p,q\in \cS$, so that $p$ and $q$ are polynomial symmetries of $\Sigma$ in the sense that \eqref{eq:spsym} holds, then also the composition $p\circ q$ is a polynomial symmetry of $\Sigma$ in the same sense. But note from Table \ref{table1} that, while $P_3\circ P_2\in \cS$, none of $P_2\circ P_2$, $P_2\circ P_2\circ P_2$, $Q_2\circ P_2$, $P_2\circ P_3$, or $P_2\circ Q_2$ are in $\cS$. Thus $\cS$ does not contain all polynomial symmetries of $\Sigma$, but whether there are polynomial symmetries that are not either in $\cS$ or else compositions of elements of $\cS$ is an open question.

We finish this section by showing in subsection \ref{subsec:a} the surprising result that $\cS$ is large enough that we can reconstruct the whole of $\Sigma_\pi$ from the polynomials $p_k\in \cS$. This result will in turn be key to the proof of our main theorem in Section \ref{sec:interior}. Then in subsection \ref{subsec:b} we use that \eqref{eq:spsym} holds for the polynomials in Table \ref{table1} to obtain new explicit lower bounds on $\Sigma_\pi$, including that $1.1\ovD \subset \Sigma_\pi\subset \Sigma$.

\subsection{Connecting eigenvalues of finite matrices and polynomial symmetries of $\Sigma$} \label{subsec:a}

Recall from Proposition \ref{prop:eqchar} that $\cS=\{p_k:k\in \cK\}$, let $K := \cup_{n\in \N} \{\pm 1\}^n$, and define
\begin{equation} \label{eq:piinfS}
\pi_\infty^{\cS} := \bigcup_{k\in \cK} \spec A^{\per}_k \subset \pi_\infty = \bigcup_{k\in K} \spec A^{\per}_k.
\end{equation}
The following result seems rather surprising, given that $\cK$ is much smaller than $K$ in the sense that there are precisely $2^{\left\lceil\frac{n}{2}\right\rceil-1}$ vectors of length $n$ in $\cK$ but $2^n$ in $K$.

\begin{thm} \label{thm:dense}
$\sigma_\infty \subset \pi_\infty^{\cS}$, so that $\pi_\infty^{\cS}$ is dense in $\Sigma_\pi$ and
$
\Sigma_\pi := \overline{\pi_\infty} = \overline{\sigma_\infty} = \overline{\pi_\infty^{\cS}}.
$
\end{thm}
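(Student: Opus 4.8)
The plan is to reduce the whole statement to the single inclusion $\sigma_\infty\subseteq\pi_\infty^{\cS}$. Once this is established the equalities follow at once: by definition $\pi_\infty^{\cS}\subseteq\pi_\infty$ and $\overline{\pi_\infty}=\Sigma_\pi$, while $\sigma_\infty$ is dense in $\pi_\infty$ by the result of \cite{Hagger:dense} recalled above; hence $\sigma_\infty\subseteq\pi_\infty^{\cS}\subseteq\pi_\infty$ gives, on taking closures, $\Sigma_\pi=\overline{\sigma_\infty}\subseteq\overline{\pi_\infty^{\cS}}\subseteq\overline{\pi_\infty}=\Sigma_\pi$, so all four sets coincide.

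To attack $\sigma_\infty\subseteq\pi_\infty^{\cS}$ I would first rewrite both sides via the polynomials $q_k$. Expanding the tridiagonal determinant $\det(\lambda I_n-A^{(n)}_k)$ along its first column and comparing with \eqref{eq:qk} shows $\det(\lambda I_n-A^{(n)}_k)=q_{k(1:n-1)}(\lambda)$, so $\sigma_\infty=\{\lambda\in\C: q_j(\lambda)=0 \text{ for some finite }\pm1\text{-string } j\}$. On the other side, by Proposition~\ref{prop:eqchar} every $k\in\cK$ is of the form $k=(r,-1,1)$ with $r$ a (possibly empty) palindrome, $p_k(\lambda)=\lambda q_r(\lambda)$, and by \eqref{eq:per} $\spec A^{\per}_k=p_k^{-1}(\Lambda_k)$ where $\Lambda_k=[-2,2]$ if $\prod_i r_i=-1$ and $\Lambda_k=\ri[-2,2]$ if $\prod_i r_i=1$. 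Thus it is enough to produce, for each finite $\pm1$-string $j$ and each root $\lambda_0$ of $q_j$, a palindrome $r$ with $\lambda_0\,q_r(\lambda_0)\in\Lambda_{(r,-1,1)}$.

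For this I would use concatenation identities for the $q_k$. Working with the associated $2\times2$ transfer matrices, or directly from \eqref{eq:qk}, one gets, for a $\pm1$-string $a=(a_1,\dots,a_\ell)$ with reversal $a'$ and any $c\in\{\pm1\}$,
\[ q_{(a,c,a')}(\lambda)=q_{a(1:\ell-1)}(\lambda)\,q_{(a,c)}(\lambda)-a_\ell\,q_{a(1:\ell-2)}(\lambda)\,q_a(\lambda) \]
and
\[ q_{(a,a')}(\lambda)=q_{a(1:\ell-1)}(\lambda)\bigl(q_a(\lambda)-a_\ell q_{a(1:\ell-2)}(\lambda)\bigr). \]
Taking $a=j$ (length $m$), evaluating at a root $\lambda_0$ of $q_j$ and setting $\beta:=q_{j(1:m-1)}(\lambda_0)\neq0$ (and noting $\lambda_0\beta=j_mq_{j(1:m-2)}(\lambda_0)$ from \eqref{eq:qk}), these collapse to $q_{(j,c,j')}(\lambda_0)=-c\beta^2$ and $q_{(j,j')}(\lambda_0)=-\lambda_0\beta^2$. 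Hence, with $k=((j,c,j'),-1,1)\in\cK$ one obtains $p_k(\lambda_0)=-c\lambda_0\beta^2$, with $\Lambda_k=[-2,2]$ when $c=-1$ and $\Lambda_k=\ri[-2,2]$ when $c=1$; and with $k=((j,j'),-1,1)\in\cK$ one obtains $p_k(\lambda_0)=-\lambda_0^2\beta^2$ and $\Lambda_k=\ri[-2,2]$. I would also use the $D_2$-invariance of $\pi_\infty^{\cS}$, which follows from Lemma~\ref{lem:symeasy}, Lemma~\ref{lem:minus} and \eqref{eq:per} (a representative in $\cK$ of $p_{-k}$ has periodic spectrum $\ri^{-1}\spec A^{\per}_k$, and the $q_k$ have real coefficients), together with $q_{-j}(\lambda)=\ri^{-m-1}q_j(\ri\lambda)$ from Lemma~\ref{lem:qksym}, to move $\lambda_0$ around its $D_2$-orbit and so gain access to whichever parity of $\Lambda_k$ is convenient.

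The crux --- and the step I expect to be genuinely hard --- is to check that for a suitable choice of palindrome $r$ (one of the ones above, or an iterate of this reflection to a longer period) the value $p_{(r,-1,1)}(\lambda_0)$ really does land in the correct $\Lambda_{(r,-1,1)}$; there are roots $\lambda_0$ for which none of the three palindromes above suffices, because $\beta$ can have an ``oblique'' argument, so iteration is unavoidable. The natural way to organise and control the iteration is to exhibit the Bloch eigenfunction explicitly: $\lambda_0$ is an eigenvalue of the order-$(m+1)$ matrix \eqref{eq:matrix2} determined by $j$, with Dirichlet eigenvector $v$ whose entries are the continuants $v_i=q_{j(1:i-2)}(\lambda_0)$ (so $v_0=v_{m+2}=0$), and one glues $v$ to a reflected copy of itself across the inserted $-1$'s at the centre and boundary of $r$, producing a bounded solution $\psi$ of $A^{\per}_{(r,-1,1)}\psi=\lambda_0\psi$ with $\psi_{i+N}=z\psi_i$, $|z|=1$. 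The subtlety is that $A_c$ is not self-adjoint, so a bare even or odd reflection does not solve the recurrence --- the extra $\pm1$'s are inserted precisely to make the gluing consistent --- and one must track how the residual boundary mismatch evolves under repeated reflection until it is forced to unit modulus. Carrying this reflection and the accompanying parity bookkeeping through is the real content; the density conclusions are then immediate, as noted above.
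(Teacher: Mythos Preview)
Your reduction to the single inclusion $\sigma_\infty\subseteq\pi_\infty^{\cS}$ is exactly right, and your instinct to build a Bloch solution by reflecting the Dirichlet eigenvector is the paper's method. The genuine gap is that you have picked the wrong palindromes, and this is what leads you to expect a ``residual boundary mismatch'' and an iteration that is in fact unnecessary.

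You try $r=(j,c,j')$ (one inserted letter) and $r=(j,j')$ (none); the paper instead inserts \emph{two equal} letters, taking $\ell=(j',a,a,j,-1,1)\in\cK_{2n+2}$ for any $a\in\{\pm1\}$, where $j=\widetilde k=k(1{:}n-1)$. In the eigenfunction picture this corresponds to inserting a single \emph{zero} in the solution between $x$ and its flip $\widehat x=J_nx$: the periodic vector $\widetilde x=(\ldots,\widehat x^{\,T},0,x^T,0,\widehat x^{\,T},0,x^T,\ldots)$ satisfies the three-term recurrence exactly at every site (the relation at each inserted $0$ reduces to $x_1-\widehat x_n=0$, which holds since $\widehat x_n=x_1$). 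So one obtains a genuinely periodic solution, Floquet phase $\varphi=0$, and by Lemma~\ref{lem:det} this forces $p_\ell(\lambda_0)=\prod_i\ell_i+1$. But $\prod_i\ell_i=a^2\bigl(\prod_i j_i\bigr)^2(-1)(1)=-1$, so $p_\ell(\lambda_0)=0\in\ri[-2,2]$ for \emph{every} eigenvalue $\lambda_0$ of $A_k^{(n)}$, irrespective of the argument of your $\beta$. No $D_2$-moves, no parity bookkeeping, no repeated reflection. Your palindromes of length $2m+1$ and $2m$ leave no room for this inserted zero, which is precisely why they produce the obstructive values $\pm c\lambda_0\beta^2$ and $-\lambda_0^2\beta^2$ that you correctly flag as uncontrolled.
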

\begin{proof} We will show in Proposition \ref{prop:inc} below that, for $n\geq2$,
$$
\sigma_n \subset \pi_{2n+2}^\cS := \bigcup_{k\in \cK_{2n+2}} \spec A^{\per}_k \subset \pi_{2n+2},
$$
where, for $m\geq 2$, $\cK_m$ denotes the set of those vectors in $\cK$ that have length $m$. Since also $\sigma_1=\{0\}\subset \pi_m^{\cS}$, for every $m\in \N$ (e.g.~\cite[Lemma 2.10]{CCL2}), this implies that $\sigma_\infty \subset \pi_\infty^{\cS}$, which implies that $\pi_\infty^{\cS}$ is dense in $\Sigma_\pi$ since  $\sigma_\infty$ is dense in $\pi_\infty$ \cite[Theorem 1]{Hagger:dense}, and the result follows.
\end{proof}

The key step in the proof of the above theorem is the following refinement of Theorem 4.1 in \cite{CCL2}, which uses our new characterisation, Proposition \ref{prop:eqchar}, of $\cS$.

\begin{prop} \label{prop:inc} Suppose $a,b,c,d\in \{\pm 1\}$ and $k\in \{\pm 1\}^n$, for some $n\geq 2$, and let $\widetilde k := (k_1,...,k_{n-1})$. Then
\begin{equation} \label{eq:elldef}
\spec A_k^{(n)} \subset \spec A_\ell^{\per},\ \mbox{ for }\ \ell =({\widetilde k}^\prime,a,b,\widetilde k,c,d)\in \{\pm 1\}^{2n+2},
\end{equation}
where ${\widetilde k}^\prime = \widetilde k J_{n-1}  = (k_{n-1},...,k_1)$. Further, $\spec A_k^{(n)} \subset \pi_{2n+2}^\cS$.
\end{prop}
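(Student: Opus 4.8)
The plan is to establish the inclusion $\spec A_k^{(n)}\subset\spec A_\ell^{\per}$ for every choice of $a,b,c,d\in\{\pm1\}$ by producing, for each eigenvalue $\lambda$ of $A_k^{(n)}$, a phase $\varphi\in[0,2\pi)$ and a nonzero $w\in\C^{2n+2}$ with $a_\ell(\varphi)w=\lambda w$; then $\det(a_\ell(\varphi)-\lambda I_{2n+2})=0$ and so $\lambda\in\spec A_\ell^{\per}$ by Lemma~\ref{lem:spper}. First I would record two facts about the finite matrix. Since a tridiagonal determinant depends only on its diagonal and on the products of paired sub- and super-diagonal entries, $\det(\lambda I_n-A_k^{(n)})$ equals the $n\times n$ determinant \eqref{eq:pdef3} with subdiagonal $k_1,\dots,k_{n-1}$, namely $q_{k(1:n-1)}(\lambda)$; hence $\lambda\in\spec A_k^{(n)}$ iff $q_{k(1:n-1)}(\lambda)=0$. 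Moreover, solving rows $1,\dots,n-1$ of $A_k^{(n)}v=\lambda v$ by the three-term recurrence \eqref{eq:qk} (with the conventions \eqref{eq:ij}) shows that any eigenvector $v$ has $v_1\neq0$ and $v_i=v_1\,q_{k(1:i-2)}(\lambda)$, $i=1,\dots,n$; in particular $v_n=v_1\,q_{k(1:n-2)}(\lambda)$.

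Next I would build $w$. Reading the couplings off $\ell=(\widetilde k^\prime,a,b,\widetilde k,c,d)$, the couplings at positions $n+2,\dots,2n$ are $k_1,\dots,k_{n-1}$, so I embed $v$ there: $w_{n+1+i}:=v_i$ for $i=1,\dots,n$. I set $w_{2n+2}:=0$, leave $w_1$ free for the moment, set $w_2:=\lambda w_1$ (as forced by row $1$ of $a_\ell(\varphi)w=\lambda w$, using $w_{2n+2}=0$), and define $w_3,\dots,w_{n+1}$ recursively from rows $2,\dots,n$. Since the couplings at positions $1,\dots,n-1$ are $k_{n-1},\dots,k_1$, this recursion is exactly \eqref{eq:qk} for the reversed block $\widetilde k^\prime$, so $w_r=w_1\,g_r(\lambda)$ with $g_r=q_{\widetilde k^\prime(1:r-2)}$; by the reversal symmetry of Lemma~\ref{lem:qksym} this gives $g_{n+1}=q_{\widetilde k^\prime}=q_{\widetilde k}=q_{k(1:n-1)}$ and $g_n=q_{k(2:n-1)}$. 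In particular $w_{n+1}=w_1\,q_{k(1:n-1)}(\lambda)=0$, consistent both with the recursion and with the equations below.

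It remains to check the remaining rows of $a_\ell(\varphi)w=\lambda w$ and to fix $w_1,\varphi$. Rows $n+3,\dots,2n$ reduce to rows $2,\dots,n-1$ of $A_k^{(n)}v=\lambda v$; rows $n+2$ and $2n+1$ reduce, using rows $1$ and $n$ of $A_k^{(n)}v=\lambda v$, to $w_{n+1}=0$ and $w_{2n+2}=0$, already imposed. None of these equations involves $b$ or $d$ (which only multiply the zero entries $w_{n+1},w_{2n+2}$), so the construction is independent of $b,d$ and depends on $a$ only through a sign. Row $n+1$ reads $aw_n+v_1=0$, i.e.\ $w_n=-av_1$; since $w_n=w_1\,q_{k(2:n-1)}(\lambda)$ this fixes $w_1=-av_1/q_{k(2:n-1)}(\lambda)$. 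Row $2n+2$ reads $cv_n+\re^{-\ri\varphi}w_1=0$, i.e.\ $\re^{\ri\varphi}=av_1/\bigl(c\,v_n\,q_{k(2:n-1)}(\lambda)\bigr)$, and this is solvable for $\varphi$ precisely when the right-hand side has modulus $1$. Using $v_n=v_1\,q_{k(1:n-2)}(\lambda)$, that condition is exactly $|q_{k(1:n-2)}(\lambda)\,q_{k(2:n-1)}(\lambda)|=1$, which I would obtain from the Desnanot--Jacobi (Dodgson condensation) identity for the $n\times n$ matrix \eqref{eq:pdef3}: its determinant times its central $(n-2)\times(n-2)$ minor equals the product of its leading and trailing $(n-1)\times(n-1)$ minors minus $\prod_{j=1}^{n-1}k_j$ (the two ``corner'' minors being triangular), i.e.
\[
q_{k(1:n-1)}(\lambda)\,q_{k(2:n-2)}(\lambda)=q_{k(1:n-2)}(\lambda)\,q_{k(2:n-1)}(\lambda)-\prod_{j=1}^{n-1}k_j ,
\]
so at the eigenvalue $q_{k(1:n-2)}(\lambda)\,q_{k(2:n-1)}(\lambda)=\prod_{j=1}^{n-1}k_j=\pm1$. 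This also gives $q_{k(2:n-1)}(\lambda)\neq0$, so $w_1$ and $\varphi$ are well defined, and $w\neq0$ since $(w_{n+2},\dots,w_{2n+1})=v\neq0$. Thus $\lambda\in\spec A_\ell^{\per}$, which proves $\spec A_k^{(n)}\subset\spec A_\ell^{\per}$ for all $a,b,c,d$.

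For the last assertion, take $a=b$, $c=-1$, $d=1$: then $\ell=(\widetilde k^\prime,a,a,\widetilde k,-1,1)$ has last two entries $-1,1$ and $(\ell_1,\dots,\ell_{2n})=(\widetilde k^\prime,a,a,\widetilde k)$ is a palindrome, so $\ell\in\cK$ by Proposition~\ref{prop:eqchar}; hence $\spec A_\ell^{\per}\subset\pi_{2n+2}^\cS$ and the inclusion just proved gives $\spec A_k^{(n)}\subset\pi_{2n+2}^\cS$. I expect the main obstacle to be the bookkeeping: pinning down which coupling $\ell_j$ sits at each position of the period, verifying all $2n+2$ equations (including the edge conventions \eqref{eq:ij} when $n$ is small), and---the single non-formal step---recognising that the modulus condition which lets $\varphi$ be chosen is exactly the Desnanot--Jacobi identity evaluated at an eigenvalue.
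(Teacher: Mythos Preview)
Your argument is correct, and the final paragraph (choosing $a=b$, $c=-1$, $d=1$ to land in $\cK$) matches the paper exactly.  For the main inclusion, however, you take a genuinely different route from the paper.

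The paper's proof is shorter and more structural: it exhibits directly a periodic $\ell^\infty$-eigenvector $\tilde x$ of a bi-infinite tridiagonal matrix $B$, obtained from $A_\ell^{\per}$ by a diagonal gauge transformation, one period of $\tilde x$ being $(\widehat x,0,x,0)$ with $\widehat x=J_nx$ the flipped eigenvector of $A_k^{(n)}$.  The verification $B\tilde x=\lambda\tilde x$ is then by inspection, and the conclusion follows from the gauge identity $\spec B=\spec A_\ell^{\per}$ together with the equality of $\ell^2$- and $\ell^\infty$-spectra for band operators.  Your approach instead works through the finite-dimensional characterisation of Lemma~\ref{lem:spper}, solving $a_\ell(\varphi)w=\lambda w$ directly for $w$ and $\varphi$ without any gauge step.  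This buys self-containment (no appeal to \cite[Lemma~3.2]{CCL2} or to \cite{LiBook}), at the cost of more computation: because you do not gauge-transform, the first block $(w_1,\dots,w_n)$ is not simply the flip of $x$ but the solution of the reversed-coupling recurrence, rescaled to match the $v$-block at position $n+1$, and the Desnanot--Jacobi identity is precisely what certifies that the required phase $\re^{\ri\varphi}$ is unimodular.  In the paper's proof that step is absorbed by the gauge transformation.
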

\begin{proof} The proof modifies \cite[Theorem 4.1]{CCL2} where the same result is proved for the special case that $a=c=-1$, $b=d=1$. Following that proof, suppose that $\lambda$ is an eigenvalue of $A_k^{(n)}$ with corresponding eigenvector $x$, let $\widehat x:= J_n x$ and $\widehat A_k^{(n)}:= J_n  A_k^{(n)} J_n$, so that $\widehat A_k^{(n)} \widehat x = \lambda \widehat x$, and set
\begin{equation}\nonumber
B\ :=\
\left(\begin{array}{ccccccc}
\ddots&
\begin{array}{ccc}1&& \end{array}\\
\cline{2-2}
\begin{array}{c}d\\ \\ \\ \end{array}&
\multicolumn{1}{|c|}{\widehat A_k^{(n)}}&
\begin{array}{c} \\ \\-a \end{array}\\
\cline{2-2}
&\begin{array}{ccr}&&-1\end{array}
& \boxed{0} &
\begin{array}{lcc}1&&\end{array}\\
\cline{4-4}
& & \begin{array}{c}b\\ \\ \\ \end{array}&
\multicolumn{1}{|c|}{{A_k^{(n)}}}&
\begin{array}{c} \\ \\-c \end{array}\\
\cline{4-4}
&&&\begin{array}{ccr}&&-1\end{array}
& 0 & \begin{array}{lcc}1&&\end{array}\\
\cline{6-6}
& && & \begin{array}{c}d\\ \\ \\ \end{array}&
\multicolumn{1}{|c|}{\widehat A_k^{(n)}}&
\begin{array}{c} \\ \\-a \end{array}\\
\cline{6-6}
&&&&&
\begin{array}{ccc}&&-1\end{array}&
\ddots
\end{array}\right),
\end{equation}
where $\boxed{0}$ marks the entry at position $(-1,-1)$. Then $B$ is a bi-infinite tridiagonal matrix with zeros on the main diagonal, and with each of the first sub- and super-diagonals a vector in $\Omega$ that is periodic with period $2n+2$. Define $\tilde x\in \ell^\infty$, the space of bounded, complex-valued sequences $\phi:\Z\to \C$, by
$$
\tilde x := (...,0,\widehat x^T,\boxed{0},x^T,0,\widehat x^T,0,x^T,...)^T,
$$
where $\boxed{0}$ marks the entry $\tilde x_{-1}$. Then it is easy to see that $B\tilde x =\lambda \tilde x$, so that $\lambda \in \spec B$.\footnote{Clearly $\lambda$ is in the spectrum of $B$ as an operator on $\ell^\infty$, but this is the same as the $\ell^2$-spectrum from general results on band operators, e.g.~\cite{LiBook}.}  Further, by a simple gauge transformation \cite[Lemma 3.2]{CCL2}, $\spec B = \spec A_\ell^{\per}$, where $\ell$ is given by \eqref{eq:elldef}.

We have shown that $\spec A_k^{(n)} \subset \spec A_\ell^{\per}$. But, choosing in particular $a=b$ and $c=-1$, $d=1$, we see from Proposition \ref{prop:eqchar} that $\ell\in \cK_{2n+2}$, so that $\spec A_k^{(n)} \subset \spec A_\ell^{\per} \subset\pi_{2n+2}^\cS$.
\end{proof}

\begin{rem} \label{rem:one}
Proposition \ref{prop:inc} implies that $\spec A_k^{(n)} \subset \spec A_\ell^{\per}$ for 16 different vectors $\ell\in \{\pm 1\}^n$, corresponding to different choices of $a,b,c,d\in \{\pm 1\}$. Some of these vectors correspond to the same polynomial $p_\ell$ and hence, by \eqref{eq:per}, to the same spectrum $\spec A_\ell^{\per}$. In particular, if $a=b=\pm 1$, then the choices $c=-d=1$ and $c=-d=-1$ lead to the same polynomial by Proposition \ref{prop:eqchar} and the definition of $\cS$. But, if $a\neq b$, again by Proposition \ref{prop:eqchar} and the definition of $\cS$, the choices $c=-d=1$ and $c=-d=-1$ must lead to different polynomials, and neither of these polynomials can be in $\cS$. On the other hand, as observed already in the proof of Proposition \ref{prop:inc}, the choices $a=b$ and $c=-d$ lead to an $\ell\in \cK$ and so to a polynomial $p_\ell\in \cS$. Thus Proposition \ref{prop:inc} implies that there are at least three distinct polynomials $p_\ell$ such that $\spec A_k^{(n)} \subset \spec A_\ell^{\per}$. Figure \ref{Figure1} plots $\spec A_k^{(n)}$ and $\spec A_\ell^{\per}$, for the vectors defined by \eqref{eq:elldef}, in the case that $n=3$ and $k_1=k_2=1$. For other plots of the spectra of finite and periodic Feinberg-Zee matrices, and the interrelation of these spectra, see \cite{HolzOrlZee,CCL,CCL2}.
\end{rem}

\begin{figure}
\begin{center}
\includegraphics[scale = 0.7, trim = 0 1cm 0 0]{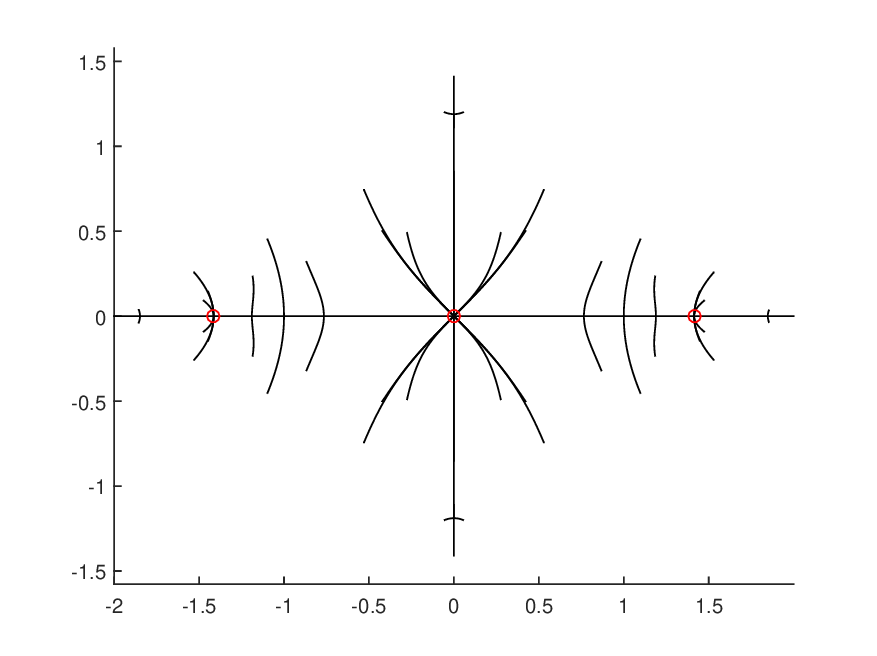}

\caption{An illustration of Proposition \ref{prop:inc} in the case $n = 3$, $k_1 = k_2 = 1$. The red circles indicate the eigenvalues, 0 and $\pm \sqrt{2}$, of $A_k^{(3)}$. The black lines are the spectra of $A_\ell^{\per}$, for the different choices of $\ell$ defined by \eqref{eq:elldef}. In this case there are $7$ distinct polynomials $p_\ell$ and $7$ associated distinct spectra $\spec A_\ell^{\per}$, each of which contains the eigenvalues of $A_k^{(3)}$. One cannot see all the spectra as separate curves because some of them overlap.} \label{Figure1}
\end{center}
\end{figure}

\subsection{Explicit lower bounds for $\Sigma_\pi$} \label{subsec:b}

As noted in \cite{Hagger:symmetries}, that the polynomials $p\in \cS$ satisfy \eqref{eq:spsym} gives us a tool to compute explicit lower bounds on $\Sigma_\pi$. Indeed \cite{Hagger:symmetries} shows visualisations of $p^{-n}(\D)$, for several $p\in \cS$ and $n\in N$, and visualisations of unions of $p^{-n}(\D)$ for $p$ varying over some finite subset of $\cS$. Since, by \eqref{eq:unit}, $\D\subset \Sigma_\pi$, it follows from \eqref{eq:spsym} that all these sets are subsets of $\Sigma_\pi$.

The study in \cite{Hagger:symmetries} contains visualisations of subsets of $\Sigma_\pi$ as just described, but no associated analytical calculations. Complementing the study in \cite{Hagger:symmetries} we make explicit calculations in this section that illustrate the use of the polynomial symmetries to compute explicit formulae for regions of the complex plane that are subsets of $\Sigma_\pi$, adding to the already known fact \eqref{eq:unit} that $\D\subset \Sigma_\pi$.

Our first lemma and corollary give explicit formulae for $p^{-1}(\D)$ when $p(\lambda)=P_3(\lambda) = \lambda^3-\lambda$ and when $p(\lambda) = Q_2(\lambda)=\lambda^3+\lambda$. These formulae are expressed in terms of the function $s\in C^\infty[-1,1]$, where, for $-1\leq t\leq 1$, $s(t)>0$ is defined as the largest positive solution of
$$
f(s):= s^3 - 2ts^2 + s = 1.
$$
If $-1\leq t<1/2$, this is the unique solution in $(0,1)$ (on which interval $f^\prime(s)\geq 0$), while if $1/2\leq t< 1$ it is the unique solution in $[1,2)$ (on which interval $f^\prime(s)\geq 0$). Explicitly, for $-1\leq t\leq 1$,
$$
s^\prime(t) = \frac{2(s(t))^3}{2-s(t) + (s(t))^3} >0,
$$
and $s(1/2)=1$, $s(1) \approx 1.75488$.

\begin{lem} \label{lem:degree3}
If $p(\lambda):= P_3(\lambda)=\lambda^3-\lambda$, then
$$
p^{-1}(\D) = E:=\{r\re^{\ri\theta}: 0\leq r < S(\theta), 0\leq\theta< 2\pi\},
$$
where $S(\theta) := \sqrt{s(\cos2\theta)}$, for $\theta\in \R$. $S\in C^\infty(\R)$ and is even and periodic with period $\pi$. In the interval $[0,\pi]$ the only stationary points of $S$ are global maxima at $0$ and $\pi$, with $S(0)=S(\pi) =\sqrt{s(1)}\approx  1.32472$,
and a global minimum at $\pi/2$. Further, $S$ is strictly decreasing on $[0,\pi/2]$ and $S(\theta)\geq 1$ in $[0,\pi]$ iff $0\leq \theta \leq \pi/6$ or $5\pi/6\leq \theta\leq \pi$, with equality iff $\theta = \pi/6$ or $5\pi/6$.
\end{lem}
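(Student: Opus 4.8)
The plan is to reduce the whole lemma to elementary facts about the real cubic $f(s) = s^3 - 2ts^2 + s$ and the function $s(t)$ introduced just before the statement. \emph{Step 1 (reduction to $f$).} Writing $\lambda = r\re^{\ri\theta}$ and using $p(\lambda) = \lambda(\lambda^2-1)$,
\[ |p(\lambda)|^2 = |\lambda|^2\,|\lambda^2-1|^2 = r^2\big(r^4 - 2r^2\cos 2\theta + 1\big), \]
so, with $s := r^2$ and $t := \cos 2\theta$, the right-hand side equals $s^3 - 2ts^2 + s = f(s)$. Hence $\lambda\in p^{-1}(\D)$ iff $f(r^2) < 1$ with $t=\cos 2\theta$, and the identity $p^{-1}(\D)=E$ follows once we show that, for each fixed $t\in[-1,1]$, $\{s\ge 0 : f(s)<1\} = [0, s(t))$: for then $f(r^2)<1 \iff r^2 < s(\cos 2\theta) \iff r < S(\theta)$.

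\emph{Step 2 (the crux).} For fixed $t$ we have $f(0)=0<1$ and $f(s)\to\infty$ as $s\to\infty$, so $[0,s(t))\subset\{f<1\}$; the reverse inclusion amounts to showing $f$ never drops back below $1$ once it has risen above it, i.e.\ that every local maximum of $f$ on $(0,\infty)$ has value $<1$. Now $f'(s)=3s^2-4ts+1$ has real roots only when $|t|\ge\sqrt3/2$ (when $|t|<\sqrt3/2$, $f$ is increasing on $[0,\infty)$ and there is nothing to prove), and for $t\ge\sqrt3/2$ these roots are positive with product $1/3$, so $f$ has a unique local maximum, at the smaller root $s_1$, with $s_1^2\le 1/3$. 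At any critical point $s_*$ one has $4ts_*=3s_*^2+1$, and substituting this into $f$ gives $f(s_*)=\tfrac12 s_*(1-s_*^2)$; hence $0<f(s_1)<\tfrac12<1$. Therefore $f=1$ has a unique root $s(t)$ on $(0,\infty)$ (so ``largest positive solution'' just means ``the positive solution''), $f<1$ exactly on $[0,s(t))$, and $p^{-1}(\D)=E$. Note in passing that $f$ crosses the level $1$ upward at $s(t)$, so $f'(s(t))>0$.

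\emph{Step 3 (the function $S$).} Since $f'(s(t))>0$ for every $t\in[-1,1]$, the implicit function theorem applied to $f(s)-1=0$ shows $s$ extends to a $C^\infty$ function on a neighbourhood of $[-1,1]$; as $s>0$ there, $S(\theta)=\sqrt{s(\cos 2\theta)}\in C^\infty(\R)$, and $S$ is even and $\pi$-periodic because $\cos(-2\theta)=\cos(2\theta+2\pi)=\cos 2\theta$. Implicit differentiation of $f(s)=1$ gives $s'(t)=2s^3/(2-s+s^3)$, which is $>0$ for $s>0$ (the denominator being positive there), and then $S'(\theta) = -\sin(2\theta)\,s'(\cos 2\theta)/\sqrt{s(\cos 2\theta)}$, so $S'(\theta)=0\iff\sin 2\theta=0$; thus on $[0,\pi]$ the only stationary points are $0$, $\pi/2$, $\pi$. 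On $(0,\pi/2)$, $\sin 2\theta>0$ so $S'<0$, while on $(\pi/2,\pi)$, $S'>0$: hence $S$ is strictly decreasing on $[0,\pi/2]$ and strictly increasing on $[\pi/2,\pi]$, so $0$ and $\pi$ are global maxima with $S(0)=S(\pi)=\sqrt{s(1)}$, and $\pi/2$ is the global minimum. Since $f(s)=s(s-1)^2$ when $t=1$, $s(1)$ is the root $>1$ of $s(s-1)^2=1$, giving $S(0)=\sqrt{s(1)}\approx 1.32472$.

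\emph{Step 4 ($S\ge 1$) and the main obstacle.} Because $f<1$ exactly on $[0,s(t))$ while $f(s(t))=1$, we get $s(t)\ge 1 \iff f(1)\le 1 \iff 2-2t\le 1 \iff t\ge\tfrac12$. Hence $S(\theta)\ge 1\iff\cos 2\theta\ge\tfrac12$, which in $[0,\pi]$ holds exactly for $\theta\in[0,\pi/6]\cup[5\pi/6,\pi]$, with equality iff $\cos 2\theta=\tfrac12$, i.e.\ $\theta=\pi/6$ or $5\pi/6$ (where $s(\tfrac12)=1$, so $S=1$). The only genuinely non-routine point in all of this is Step 2: confirming that the ``wiggle'' which $f$ develops once $|t|>\sqrt3/2$ never reaches height $1$. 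That is exactly what collapses the sublevel set $\{f<1\}$ to a single interval $[0,s(t))$ — and hence pins down $s(t)$ as the only solution that matters — after which the set identity, the smoothness of $s$ and $S$, and the level-set and monotonicity analysis of $S$ are all straightforward calculus.
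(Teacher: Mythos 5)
Your proof is correct and follows essentially the same route as the paper, which simply computes $|p(r\re^{\ri\theta})|^2 = r^6 - 2r^4\cos(2\theta)+r^2 = f(r^2)$ and declares the rest "straightforward" from the stated properties of $s$. Your Step 2 — checking via $f(s_*)=\tfrac12 s_*(1-s_*^2)<1$ at the critical point that the sublevel set $\{f<1\}$ really is the single interval $[0,s(t))$ — supplies exactly the detail the paper omits (only note that for $t\le-\sqrt3/2$ the roots of $f'$ are real but negative, so $f$ is still increasing on $[0,\infty)$ there).
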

\begin{proof}
If $\lambda = r\re^{\ri\theta}$ (with $r\geq 0$, $\theta\in\R$), then
$|p(\lambda)|^2 = |\lambda^3-\lambda|^2 = |r^3\re^{2\ri\theta}-r|^2 = r^6-2r^4\cos(2\theta) + r^2$. It is straightforward to show that
$|p(\lambda)|<1$ iff  $0\leq r< S(\theta)$.
The properties of $S$ claimed follow easily from the properties of $s$ stated above.
\end{proof}

\begin{cor} \label{cor:degree3}
If $p(\lambda):= Q_2(\lambda)=\lambda^3+\lambda$, then
$$
p^{-1}(\D) = \ri E =\{r\re^{\ri\theta}: 0\leq r < S(\theta-\pi/2), 0\leq\theta< 2\pi\}.
$$
\end{cor}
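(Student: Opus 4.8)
The plan is to deduce the corollary immediately from Lemma~\ref{lem:degree3} via the elementary scaling relation between $Q_2$ and $P_3$. First I would record that
\[
Q_2(\lambda)=\lambda^3+\lambda=\ri^{-3}\bigl((\ri\lambda)^3-\ri\lambda\bigr)=\ri^{-3}P_3(\ri\lambda),\qquad \lambda\in\C,
\]
which is the instance $n=3$ of Lemma~\ref{lem:minus} applied to $k=(1,-1,1)$ (noting that $p_{-k}=p_{(-1,1,-1)}=p_{(-1,-1,1)}=Q_2$ by the cyclic-permutation invariance in Lemma~\ref{lem:pksym}), but which is quickest to verify directly. Since $|\ri^{-3}|=1$, this gives $|Q_2(\lambda)|=|P_3(\ri\lambda)|$ for all $\lambda$, and hence
\[
p^{-1}(\D)=Q_2^{-1}(\D)=\{\lambda\in\C:\ri\lambda\in P_3^{-1}(\D)\}=\{\lambda\in\C:\ri\lambda\in E\}=-\ri\,E,
\]
using $P_3^{-1}(\D)=E$ from Lemma~\ref{lem:degree3} and the fact that $\ri\lambda\in E$ iff $\lambda\in-\ri\,E$.

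Next I would observe that $E=-E$, and hence $-\ri\,E=\ri\,E$. This is immediate from Lemma~\ref{lem:degree3}: there $S$ is even with period $\pi$, so $S(\theta+\pi)=S(\theta)$ and $E=\{r\re^{\ri\theta}:0\le r<S(\theta)\}$ is invariant under $\lambda\mapsto-\lambda$. Thus $p^{-1}(\D)=\ri\,E$.

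Finally I would convert $\ri\,E$ to the stated polar form. Writing a generic point of $\ri\,E$ as $\ri r\re^{\ri\theta}=r\re^{\ri(\theta+\pi/2)}$ with $0\le r<S(\theta)$ and relabelling the argument, I get $\ri\,E=\{r\re^{\ri\theta}:0\le r<S(\theta+\pi/2),\,0\le\theta<2\pi\}$; here I use the $2\pi$-periodicity of $\theta\mapsto\re^{\ri\theta}$ to restrict to $[0,2\pi)$ and then the $\pi$-periodicity of $S$ to replace $S(\theta+\pi/2)$ by $S(\theta-\pi/2)$, matching the claim. There is no real obstacle: the proof is a rotation of Lemma~\ref{lem:degree3}, and the only points needing a moment's care are the sense of the rotation ($\ri$ versus $-\ri$), resolved by $E=-E$, and the bookkeeping with $S(\theta\pm\pi/2)$, resolved by periodicity of $S$. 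An equally short alternative bypasses Lemma~\ref{lem:minus}: repeating the computation in the proof of Lemma~\ref{lem:degree3} with $Q_2$ in place of $P_3$ gives $|Q_2(r\re^{\ri\theta})|^2=r^6+2r^4\cos2\theta+r^2$, which is precisely the expression for $|P_3(r\re^{\ri\theta})|^2$ with $\cos2\theta$ replaced by $-\cos2\theta=\cos(2\theta+\pi)$, i.e.\ with $\theta$ shifted by $\pi/2$; Lemma~\ref{lem:degree3} then delivers the bound $0\le r<S(\theta+\pi/2)=S(\theta-\pi/2)$ at once.
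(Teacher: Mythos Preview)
Your argument is correct and follows the same route as the paper: the paper's proof is the single observation that $Q_2(\lambda)=\ri\,P_3(\ri\lambda)$ (which is your identity $Q_2(\lambda)=\ri^{-3}P_3(\ri\lambda)$) together with Lemma~\ref{lem:degree3}. You have simply spelled out the rotation bookkeeping (the $\ri$ versus $-\ri$ issue via $E=-E$, and the $S(\theta\pm\pi/2)$ conversion via the $\pi$-periodicity of $S$) that the paper leaves implicit.
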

\begin{proof} This is clear from Lemma \ref{lem:degree3} and the observation that $Q_2(\lambda) = \ri P_3(\ri\lambda)$.
\end{proof}

Since $P_3, Q_2\in \cS$, it follows from the above lemma and corollary and \eqref{eq:pply} that
$
E\cup \ri E \subset \intt(\Sigma_\pi)$.
But this implies by \eqref{eq:pply}, since $P_2(\lambda)=\lambda^2$ is also in $\cS$, that also
$
\sqrt{\ri E}  \subset \intt(\Sigma_\pi)$,
where, for $S\subset \C$, $\sqrt{S}:= \{\lambda \in \C: \lambda^2\in S\}$.
In particular,
\begin{eqnarray} \label{eq:W1}
W_1 := \{r\re^{\ri\theta}: 0\leq r < S(\theta), -\pi/6\leq\theta\leq \pi/6\}\subset E\subset \intt(\Sigma_\pi)\ \; \mbox{ and}\\ \label{eq:W2}
W_2:= \{r\re^{\ri\theta}: 0\leq r < \sqrt{S(2\theta-\pi/2)}, \pi/6\leq\theta\leq \pi/3\}\subset \sqrt{\ri E} \subset \intt(\Sigma_\pi).
\end{eqnarray}
It is easy to check that
\begin{equation} \label{eq:union}
\bigcup_{m=0}^3 \ri^m (W_1\cup W_2) = E\cup \ri E \cup \sqrt{\ri E} \subset \intt(\Sigma_\pi).
\end{equation}

Next note from Table \ref{table1} that $p\in \cS$ where $p(\lambda) := \lambda^5 - \lambda^3+\lambda$ factorises as
$$
p(\lambda) = \lambda(\lambda - \re^{\ri \pi/6})(\lambda + \re^{\ri \pi/6})(\lambda - \re^{-\ri \pi/6})(\lambda + \re^{-\ri \pi/6}).
$$
Thus, for $\lambda= \exp(\pm \ri \pi/6)+w$ with $|w| \leq \epsilon$,
$$
|p(\lambda)|\leq (1+\epsilon)\epsilon(2+\epsilon)(2\sin(\pi/6)+\epsilon)(2\cos(\pi/6)+\epsilon)=\epsilon(1+\epsilon)^2(\sqrt{3}+\epsilon)(2+\epsilon)=:g(\epsilon).
$$
Let $\eta\approx 0.174744$ be the unique positive solution of $g(\epsilon)=1$.  Clearly $|p(\lambda)| < 1$ if $\lambda = \exp(\pm\ri\pi/6)+w$, with $|w|<\eta$, so that
\begin{equation} \label{eq:iun}
\exp(\pm\ri \pi/6) + \eta \D \subset p^{-1}(\D) \subset \intt(\Sigma_\pi).
\end{equation}

\begin{figure}
\begin{center}
\includegraphics[scale = 0.7, trim = 0 1cm 0 0]{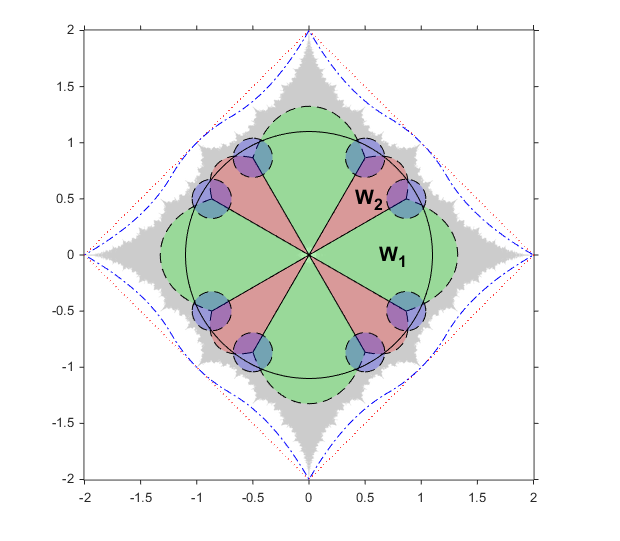}

\caption{A plot showing $W_1$ (green), $W_2$ (red), $\re^{\pm\ri\pi/6}+\eta \D$ (blue), and their rotations by multiples of $\pi/2$. The union of the green, red, and blue regions is $W\subset \Sigma_\pi$, defined by \eqref{eq:bb}. $W$ contains $1.1\ovD$, indicated by the black circle, see Proposition \ref{prop:betterbounds}. In the background in grey one can see $\pi_{30}\subset \Sigma_\pi$. The dotted and dashed-dotted curves are the boundaries of $\overline{\Delta}$ and $N_2$, respectively, defined by \eqref{eq:nr} and \eqref{eq:nr2}, with $\overline{\Delta} \supset N_2\supset \Sigma \supset \Sigma_\pi$.} \label{Figure2}
\end{center}
\end{figure}

We have shown most of the following proposition that extends to a region $W\supset 1.1\ovD$ (illustrated in Figure \ref{Figure2}) the part of the complex plane that is known to consist of interior points of $\Sigma_\pi$, making explicit implications of the polynomial symmetries $\cS$ of $\Sigma$. Before \cite{Hagger:symmetries} and the current paper the most that was known explicitly was that $\D\subset \intt(\Sigma_\pi)$.

\begin{prop} \label{prop:betterbounds}
\begin{equation} \label{eq:bb}
1.1\ovD \subset W := \bigcup_{m=0}^3 \ri^m \left(W_1\cup W_2 \cup (\re^{\ri\pi/6}+\eta \D) \cup (\re^{-\ri\pi/6}+\eta \D)\right)\subset \intt(\Sigma_\pi).
\end{equation}
\end{prop}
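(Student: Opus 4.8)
The inclusion $W\subset\intt(\Sigma_\pi)$ is immediate from the preceding discussion: by \eqref{eq:union} and \eqref{eq:iun} each of $W_1\cup W_2$ and $\re^{\pm\ri\pi/6}+\eta\D$ lies in $\intt(\Sigma_\pi)$, and $\intt(\Sigma_\pi)$ is invariant under $\lambda\mapsto\ri\lambda$ (since $\Sigma_\pi$ is, by Lemma~\ref{lem:symeasy}, and rotation is a homeomorphism); hence each of the four sets $\ri^m\left(W_1\cup W_2\cup(\re^{\ri\pi/6}+\eta\D)\cup(\re^{-\ri\pi/6}+\eta\D)\right)$, $m=0,1,2,3$, and therefore $W$, lies in $\intt(\Sigma_\pi)$. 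The content of the proposition is thus the inclusion $1.1\ovD\subset W$, which I would prove by a direct geometric verification.

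First I would record that $W$ is highly symmetric. It is invariant under $\lambda\mapsto\ri\lambda$ by construction, and also under the reflection $\tau(\lambda):=\ri\overline\lambda$ in the line $\arg\lambda=\pi/4$: using that the function $S$ of Lemma~\ref{lem:degree3} is even and $\pi$-periodic, one checks $\tau(W_1)=\ri W_1$ and $\tau(W_2)=W_2$, and that $\tau$ maps each disk $\ri^m(\re^{\pm\ri\pi/6}+\eta\D)$ to another such disk. Hence $W$ is invariant under the dihedral group of order~$8$ generated by $\lambda\mapsto\ri\lambda$ and $\tau$, for which the closed sector $\Theta:=\{r\re^{\ri\theta}:0\le r\le1.1,\ 0\le\theta\le\pi/4\}$ is a fundamental domain. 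Since $1.1\ovD$ is invariant under the same group, it suffices to prove $\Theta\subset W$.

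For this I would use Lemma~\ref{lem:degree3} together with two threshold angles. Let $\theta_1\in(0,\pi/6)$ be the unique angle with $S(\theta_1)=1.1$ (existence and uniqueness from the strict decrease of $S$ on $[0,\pi/2]$, with $S(0)=\sqrt{s(1)}>1.1$ and $S(\pi/6)=1$), so that $\cos 2\theta_1$ is the root of $(1.21)^3-2t(1.21)^2+1.21=1$ and $\theta_1\approx0.414$; and let $\theta_2\in(\pi/6,\pi/4)$ be the unique angle with $S(2\theta_2-\pi/2)=1.21$, so that $\cos(4\theta_2-\pi)$ is the root of $(1.4641)^3-2t(1.4641)^2+1.4641=1$ and $\theta_2\approx0.642$. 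Then $\Theta=A\cup B$ where $A:=\{r\re^{\ri\theta}\in\Theta:\ r\le1,\text{ or }\theta<\theta_1,\text{ or }\theta>\theta_2\}$ and $B:=\{r\re^{\ri\theta}:1<r\le1.1,\ \theta_1\le\theta\le\theta_2\}$, and I claim both lie in $W$. For $A$: the part with $r\le1$ lies in $W_1\cup W_2\cup(\re^{\ri\pi/6}+\eta\D)$, since $S\ge1$ on $[0,\pi/6]$ places $r\re^{\ri\theta}$ (with $r\le1$, $\theta\le\pi/6$) in $W_1$ unless it equals $\re^{\ri\pi/6}$, while $\sqrt{S(2\theta-\pi/2)}\ge1$ on $(\pi/6,\pi/4]$ places the remaining points in $W_2$; the part with $r>1$ and $\theta<\theta_1$ lies in $W_1$ because then $S(\theta)>1.1\ge r$; and the part with $r>1$ and $\theta>\theta_2$ lies in $W_2$ because then $\sqrt{S(2\theta-\pi/2)}>1.1\ge r$. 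For $B$: since $\cos$ is decreasing and $|\theta-\pi/6|\le\max\{\pi/6-\theta_1,\theta_2-\pi/6\}\approx0.118$ for $\theta\in[\theta_1,\theta_2]$, and recalling $\eta\approx0.1747$,
\[
|r\re^{\ri\theta}-\re^{\ri\pi/6}|^2 = r^2-2r\cos(\theta-\pi/6)+1 \le (1.1)^2+1-2(1.1)\cos(0.118) \approx 0.0254 < \eta^2 \approx 0.0305,
\]
so that $B\subset\re^{\ri\pi/6}+\eta\D\subset W$. This exhausts $\Theta$ and gives $1.1\ovD\subset W$.

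I expect no conceptual obstruction: the first inclusion is bookkeeping and the reduction to the sector $\Theta$ is routine. The step needing genuine care is the final displayed inequality — the disks $\re^{\pm\ri\pi/6}+\eta\D$ have radius $\eta\approx0.175$, only slightly above the reach $\approx0.159$ needed to span the thin angular windows $[\theta_1,\theta_2]$ about $\re^{\pm\ri\pi/6}$ in which $W_1$ and $W_2$ fall short of radius $1.1$; so the values of $\theta_1$, $\theta_2$ and $\eta$ must be pinned down with a modicum of precision rather than read off Figure~\ref{Figure2}.
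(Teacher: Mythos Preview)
Your proof is correct and takes essentially the same approach as the paper's: reduce by symmetry to the sector $0\le\theta\le\pi/4$, cover it with $W_1\cup W_2$ outside a small angular window around $\pi/6$, and bridge that window with the disk $\re^{\ri\pi/6}+\eta\D$. The only difference is in the cutoffs---the paper uses the convenient exact values $\pi/8$ and $5\pi/24$ (so the bounds on $S$ reduce to checking $s(\sqrt2/2)>1.21$ and $s(\sqrt3/2)>1.4641$, and the disk estimate to an explicit inequality in $\cos(\pi/24)$ and $\eta>0.174$, all verifiable in exact arithmetic) rather than your optimal $\theta_1,\theta_2$, which, as you rightly note, require the numerics to be pinned down with care.
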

\begin{proof}
That $W\subset \intt(\Sigma_\pi)$ is \eqref{eq:union} combined with \eqref{eq:iun} and Lemma \ref{lem:symeasy}. It is easy to see that $W$ is invariant with respect to the maps $\lambda\mapsto \ri\lambda$ and $\lambda \mapsto \overline{\lambda}$, and so is invariant under the dihedral symmetry group $D_2$ generated by these two maps. Thus to complete the proof it is sufficient to show that $z:=r\re^{\ri \theta}\in W$ for $0\leq r\leq 1.1$ and $0\leq \theta\leq \pi/4$.

Now since, by Lemma \ref{lem:degree3}, $S(\theta)\geq 1$ for $-\pi/6\leq \theta\leq \pi/6$, $z\in W_1\cup W_2$ for $0\leq r\leq 1$, $0\leq \theta \leq \pi/4$. Further, since, by Lemma \ref{lem:degree3}, $S$ is even and is increasing on $[-\pi/6,0]$, $S(\theta)\geq S(\pi/8) = \sqrt{s(\sqrt{2}/2)\,}$, for $0\leq \theta \leq \pi/8$, and $\sqrt{S(2\theta-\pi/2)} \geq \sqrt{S(\pi/12)}=\left(s(\sqrt{3}/2)\right)^{1/4}$, for $5\pi/24\leq \theta \leq \pi/4$. But $\sqrt{3}>1.73$, so that $f(1.5) < 0.9825$ for $t=\sqrt{3}/2$, which implies that $s(\sqrt{3}/2)>1.5$, so that
$\left(s(\sqrt{3}/2)\right)^{1/4}>1.1$. Similarly, $\sqrt{2}/2>0.705$ and $s(0.705) = 1.25$, so that $\sqrt{s(\sqrt{2}/2)\,}>\sqrt{1.25}>1.1$. Thus $z\in W_1$ for $0\leq r\leq 1.1$ and $0\leq \theta\leq \pi/8$, while $z\in W_2$ for $0\leq r\leq 1.1$ and $5\pi/24\leq \theta\leq \pi/4$.

To conclude that $1.1\ovD \subset W$, it remains to show that $z\in W$ for $1 \leq r\leq 1.1$ and $|\pi/6-\theta|\leq \pi/24$. But it is easy to check that, for these ranges of $r$ and $\theta$, $z\in \exp(\ri \pi/6)+ \eta \D$ provided $\cos(\pi/24) + \sqrt{\cos^2(\pi/24) + \eta^2-1} > 1.1$. But this last inequality holds since $\cos(\pi/24)= \frac{1}{2}\sqrt{2+\sqrt{2+\sqrt{3}}}>0.991$ and $g(0.174)<1$ so that $\eta > 0.174$.
\end{proof}

\section{Interior points of $\Sigma_\pi$} \label{sec:interior}

We have just, in Proposition \ref{prop:betterbounds}, extended to a region $W\supset 1.1\ovD$ the part of the complex plane that is known to consist of interior points of $\Sigma_\pi$. In this section we explore the relationship between $\Sigma_\pi$ and its interior further. We show first of all, using \eqref{eq:pply} and that $P_n\in \cS$ for every $n\geq 2$, that $[0,2)\subset \intt(\Sigma_\pi)$. Next we use this result to show that, for every $n\geq 2$, all but finitely many points in $\pi^\cS_n$ are interior points of $\Sigma_\pi$. Finally, we prove, using Theorem \ref{thm:dense}, that $\Sigma_\pi$ is the closure of its interior. If indeed it can be shown, as conjectured in \cite{CCL}, that $\Sigma_\pi=\Sigma$, then the result will imply the truth of another conjecture in \cite{CCL}, that $\Sigma$ is the closure of its interior.

Our technique for establishing that $[0,2)\subset \intt(\Sigma_\pi)$ will be to use that $P_m^{-1}((-1,1)) \subset \intt(\Sigma_\pi)$, for every $m\geq 3$, this a particular instance of \eqref{eq:pply}. This requires first a study of the real solutions of the equations $P_m(\lambda)=\pm 1$ and their interlacing properties, which we now undertake.

From \eqref{eq:theta},
$$
P_m(2) = 2m,\ P_m(2\cos(\pi/m)) = 0,\ \mbox{ and }\ P_m(2\cos(3\pi/(2m))) = -2 \cot(3\pi/(2m)).
$$
This implies that the equation $P_m(\lambda)=1$ has a solution in $(2\cos(\pi/m),2)$. Let $\lambda_m^+$ denote the largest solution in this interval. Further, if $m\ge 5$ then $P_m(\lambda)=-1$ has a solution in $(2\cos(3\pi/(2m)),2)$ since $-2 \cot(3\pi/(2m))<-1$. For $m\ge 4$ let $\lambda_m^-$ denote the largest solution to $P_m(\lambda)=-1$ in $(0,2)$, which is in the interval $(2\cos(3\pi/(2m)),2)$ if $m\ge 5$, while an explicit calculation gives that $\lambda_4^-=1$.

Throughout the following calculations we use the notation $r_n(\theta)$ from \eqref{eq:theta}.
\begin{lem} \label{lem:inc}
For $m\ge 4$ it holds that $P_m$ is strictly increasing on $(\lambda_m^-,2)$, that $\lambda_m^-<\lambda_m^+$, and that $-1<P_m(\lambda)<1$ for $\lambda_m^-<\lambda<\lambda_m^+$.
\end{lem}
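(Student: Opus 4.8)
The plan is to work entirely with the trigonometric representation $P_m(2\cos\theta) = r_m(\theta) = 2\cot\theta\,\sin m\theta$ from \eqref{eq:theta}, substituting $\lambda = 2\cos\theta$ so that the interval $\lambda \in (0,2)$ corresponds to $\theta \in (0,\pi/2)$, with $\lambda$ decreasing as $\theta$ increases. Under this substitution, the claimed interval $(\lambda_m^-,2)$ becomes $\theta \in (0,\theta_m^-)$ where $\theta_m^- := \arccos(\lambda_m^-/2)$, and since $\lambda_m^- \in (2\cos(3\pi/(2m)),2)$ for $m \ge 5$ (and $\lambda_4^- = 1$, i.e. $\theta_4^- = \pi/3$), we have $\theta_m^- \in (0,3\pi/(2m))$ for $m \ge 5$. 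The three assertions to prove become: (a) $r_m$ is strictly \emph{decreasing} on $(0,\theta_m^-)$ (decreasing in $\theta$ matches increasing in $\lambda$); (b) $\theta_m^- > \theta_m^+ := \arccos(\lambda_m^+/2)$, equivalently $\lambda_m^- < \lambda_m^+$; and (c) $-1 < r_m(\theta) < 1$ for $\theta_m^+ < \theta < \theta_m^-$.

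First I would establish monotonicity (a). Differentiating $r_m(\theta) = 2\cot\theta\sin m\theta$ gives $r_m'(\theta) = 2m\cot\theta\cos m\theta - 2\csc^2\theta\,\sin m\theta$. On $(0,\pi/m)$ we have $\sin m\theta > 0$ and $\cos m\theta$ can be positive or negative; the cleanest route is to note that on the smaller interval $(0, 3\pi/(2m)) \subset (0, 2\pi/m)$ one can control signs directly. Actually the sharpest approach: show $r_m$ is decreasing on all of $(0,\pi/m)$ by writing $r_m(\theta) = 2\cos\theta \cdot U_{m-1}(\cos\theta)$ — no, better to use $\frac{\sin m\theta}{\sin\theta}$ type identities. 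I would instead write $r_m(\theta) = 2\cos\theta\,\dfrac{\sin m\theta}{\sin\theta}$ and observe $\dfrac{\sin m\theta}{\sin\theta} = U_{m-1}(\cos\theta)$, which is positive and, as a function of $\theta$, decreasing on $(0,\pi/m)$ since $U_{m-1}$ is increasing on $[\cos(\pi/m),1]$ while $\cos\theta$ is decreasing; combined with $2\cos\theta > 0$ decreasing, the product is positive and decreasing on $(0,\pi/m)$. Since $\theta_m^- \le \pi/3 < \pi/m$ fails for small $m$... here I need to be careful: for $m=4$, $\pi/m = \pi/4 < \pi/3 = \theta_4^-$, so $(0,\theta_4^-)$ is \emph{not} contained in $(0,\pi/m)$. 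So this clean argument only covers part of the interval and the remainder near $\theta = \pi/m$ where $r_m$ crosses zero needs separate treatment: on $(\pi/m, 3\pi/(2m))$, $\sin m\theta < 0$ and $\cot\theta > 0$ so $r_m < 0$, and one checks $r_m' < 0$ there too by examining the explicit derivative formula, using that $m\theta \in (\pi, 3\pi/2)$ so $\cos m\theta < 0$, making $2m\cot\theta\cos m\theta < 0$ and $-2\csc^2\theta\sin m\theta < 0$, hence $r_m'(\theta) < 0$ throughout $(\pi/m, 3\pi/(2m))$. Stitching these and noting $\theta_m^- \le 3\pi/(2m)$ for $m\ge 5$ and $\theta_4^- = \pi/3 < 3\pi/8 = 3\pi/(2\cdot 4)$ handles (a).

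Given strict monotonicity on $(0,\theta_m^-)$, parts (b) and (c) follow quickly. Since $r_m$ is continuous and strictly decreasing on $(0,\theta_m^-]$, with $r_m(0^+) = +\infty$ (as $\cot\theta\to+\infty$ and $\sin m\theta \sim m\theta$, so $r_m(\theta)\to 2m$... actually $r_m(0^+) = 2\lim \cot\theta\sin m\theta = 2m$, finite, equal to $P_m(2)$), and $r_m(\theta_m^-) = -1$ by definition of $\lambda_m^-$, the value $+1$ is attained exactly once on $(0,\theta_m^-)$, at some $\theta^* \in (0,\theta_m^-)$; this $\theta^*$ corresponds to the largest $\lambda$-root of $P_m(\lambda)=1$ less than $2$, and I must check it equals $\lambda_m^+$ — which holds because $\lambda_m^+$ was defined as the largest root of $P_m(\lambda)=1$ in $(2\cos(\pi/m),2)$, and $\theta_m^- < \pi/m$ for $m\ge 5$ forces any root in a neighborhood... here one uses that $r_m > 1$ for $\theta < \theta^*$ within the monotone range, hence no roots of $r_m = 1$ in $(0,\theta^*)$, so $\theta^* = \theta_m^+$ indeed. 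Then $\theta_m^+ = \theta^* < \theta_m^-$ gives $\lambda_m^+ > \lambda_m^-$, which is (b); and on $(\theta_m^+,\theta_m^-) = (\theta^*, \theta_m^-)$ strict monotonicity gives $r_m(\theta_m^-) = -1 < r_m(\theta) < r_m(\theta^*) = 1$, which is (c) after translating back to $\lambda$.

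The main obstacle I anticipate is the case analysis forced by small $m$ (especially $m=4$ and $m=5$), where $\theta_m^-$ is not comfortably inside $(0,\pi/m)$ and the zero of $r_m$ at $\theta = \pi/m$ sits inside the interval of interest, so the monotonicity proof must be assembled from two or three subintervals with sign bookkeeping on $\cos m\theta$ and $\sin m\theta$. A secondary subtlety is pinning down that the $\theta^*$ produced by the intermediate value theorem genuinely is $\theta_m^+$ (equivalently that $\lambda_m^+$ lies in the monotone range), which needs the bound $\theta_m^- < \pi/m$ for $m \ge 5$ together with the explicit computation $\lambda_4^- = 1 < \lambda_4^+$, where $\lambda_4^+$ is the positive root of $P_4(\lambda) = \lambda^4 - 2\lambda^2 = 1$, i.e. $\lambda_4^+ = \sqrt{1+\sqrt{2}} \approx 1.554$, comfortably exceeding $1$.
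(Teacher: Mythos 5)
There is a genuine gap in the monotonicity step on the second subinterval. On $(\pi/m,3\pi/(2m))$ you have $m\theta\in(\pi,3\pi/2)$, so $\sin m\theta<0$; hence the term $-2\csc^2\theta\,\sin m\theta$ in $r_m'(\theta)=2m\cot\theta\cos m\theta-2\csc^2\theta\sin m\theta$ is \emph{positive}, not negative as you claim. The two terms have opposite signs there, so no conclusion follows from sign bookkeeping — and in fact the conclusion is false: for $m=5$ one computes $r_5(1.3\pi/5)\approx-1.52$ but $r_5(3\pi/10)=-2\cot(3\pi/10)\approx-1.45$, so $r_5$ is \emph{not} decreasing on all of $(\pi/5,3\pi/10)$. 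Since your only a priori localization of $\theta_m^-$ is $\theta_m^-<3\pi/(2m)$, your argument cannot deliver assertion (a) (that $P_m$ is strictly increasing on $(\lambda_m^-,2)$), which is precisely the assertion that needs monotonicity beyond $\pi/m$. The paper avoids this trap by proving monotonicity of $r_m$ only on the shorter interval $(0,\pi/m+\pi/m^2)$ (by induction on $m$, via the recursion $r_{n+1}(\theta)=\cos\theta\,(r_n(\theta)+2\cos n\theta)$) and then separately checking $r_m(\pi/m+\pi/m^2)<-1$ for $m\ge5$, which forces $\theta_m^-<\pi/m+\pi/m^2$ and puts the whole interval $(0,\theta_m^-)$ inside the monotone range. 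To repair your proof you would need an analogous sharper upper bound on $\theta_m^-$ (some point $\le$ the first critical point of $r_m$ past $\pi/m$ at which $r_m$ is already below $-1$); the bound $3\pi/(2m)$ is too crude.

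Two smaller remarks. Your Chebyshev-factorization argument for strict decrease of $r_m=2\cos\theta\cdot U_{m-1}(\cos\theta)$ on $(0,\pi/m)$ is correct and is a clean alternative to the paper's induction on that portion. Also, assertions (b) and (c) of the lemma could actually be salvaged from what you do legitimately establish: $\theta_m^+<\pi/m<\theta_m^-$ gives $\lambda_m^-<\lambda_m^+$, and on $(\pi/m,\theta_m^-)$ one has $r_m<0<1$ together with $r_m>-1$ (since $\theta_m^-$ is the first crossing of $-1$ and $r_m(\pi/m)=0$), while on $(\theta_m^+,\pi/m]$ monotonicity gives $0\le r_m<1$. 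But assertion (a) remains unproved by your route.
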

\begin{proof}
Explicitly, $P_4(\lambda) = \lambda U_3(\lambda/2)=\lambda^4-2\lambda^2$, so that $P_4^\prime(\lambda)=4(\lambda^3-1)$ and these claims are clear for $m=4$.

Suppose now that $m\ge 5$. It follows by induction that, for $n\geq 3$, $r_n(\theta)$ is strictly decreasing on $(0,\pi/n+\pi/n^2)$. For $r_3(\theta)= 2\cos\theta(4\cos^2\theta-1)$ is strictly decreasing on $(0,4\pi/9)\subset (0,\pi/2)$ and, if this statement is true for some $n\geq 3$, then
$$
r_{n+1}(\theta) = 2\cot\theta \sin((n+1)\theta) = \cos\theta(r_n(\theta) + 2\cos n\theta)
$$
is strictly decreasing on $(0,\pi/(n+1)+\pi/(n+1)^2)\subset (0,\pi/n)$. Further,
\begin{eqnarray*}
r_m(\pi/m+\pi/m^2) &=& -2\cos(\pi/m+\pi/m^2)\sin(\pi/m)/\sin(\pi/m+\pi/m^2)\\
& <&-\ \frac{2m\cos(\pi/5+\pi/25)}{m+1}<-10\cos(6\pi/25)/6 < -1,
\end{eqnarray*}
 since $\sin a/\sin b > a/b$ for $0<a<b<\pi$. As $r_m(\theta)=P_m(2\cos\theta)$, these observations imply that, on $(2\cos(\pi/m+\pi/m^2),2)$, $P_m$ is strictly increasing, and that $\lambda_m^->2\cos(\pi/m+\pi/m^2)$. Thus $\lambda_m^+>\lambda_m^-$ follows from the definitions of $\lambda_m^\pm$, and $-1<P_m(\lambda)<1$ for $\lambda_m^-<\lambda<\lambda_m^+$.
\end{proof}

 As observed above it follows from \eqref{eq:pply} that $P_m^{-1}((-1,1))\subset \intt(\Sigma_\pi)$, for $m\geq 2$. Combining this observation with Lemma \ref{lem:inc} and the fact that $P_3(\lambda)=\lambda^3-\lambda \in (-1,1)$ if $-\lambda_3^+<\lambda<\lambda_3^+$, we obtain the following corollary.

\begin{cor} \label{cor:intpts}
For $m=4,5,...$, $(\lambda_m^-,\lambda_m^+)\subset \intt(\Sigma_\pi)$. Also, $(-\lambda_3^+,\lambda_3^+)\subset \intt(\Sigma_\pi)$.
\end{cor}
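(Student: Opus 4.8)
The goal of Corollary \ref{cor:intpts} is to identify explicit open real intervals lying in $\intt(\Sigma_\pi)$. The plan is to combine two ingredients that are already in place: the inclusion $p^{-1}(\intt(\Sigma_\pi)) \subset \intt(\Sigma_\pi)$ from \eqref{eq:pply} (valid for every $p\in\cS$, and in particular for $p=P_m$, $m\geq 3$, since $P_m\in\cS$ by Proposition \ref{prop:eqchar} and Lemma \ref{lem:cheb}), together with the structural information about $P_m$ on the real axis supplied by Lemma \ref{lem:inc}. Since $(-1,1)\subset\D\subset\intt(\Sigma_\pi)$ by \eqref{eq:unit}, \eqref{eq:pply} gives immediately that $P_m^{-1}((-1,1))\subset\intt(\Sigma_\pi)$ for every $m\geq 3$. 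The remaining task is purely to locate a real sub-interval of $P_m^{-1}((-1,1))$ of the advertised form.

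For the first assertion, fix $m\geq 4$. By Lemma \ref{lem:inc}, $P_m$ is strictly increasing on $(\lambda_m^-,2)$, so in particular it is injective there, with $P_m(\lambda_m^-)=-1$ (by definition of $\lambda_m^-$) and $P_m(\lambda_m^+)=1$ (by definition of $\lambda_m^+$), and $\lambda_m^-<\lambda_m^+\leq 2$. Hence monotonicity forces $-1<P_m(\lambda)<1$ for every $\lambda\in(\lambda_m^-,\lambda_m^+)$ --- this is precisely the last clause of Lemma \ref{lem:inc} --- so $(\lambda_m^-,\lambda_m^+)\subset P_m^{-1}((-1,1))\subset\intt(\Sigma_\pi)$, as required. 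For the second assertion one argues directly with $P_3(\lambda)=\lambda^3-\lambda$, whose relevant solution $\lambda_3^+$ is the largest root of $\lambda^3-\lambda=1$ in $(2\cos(\pi/3),2)=(1,2)$. Since $P_3$ is an odd polynomial and, as noted in the text just before the corollary, $P_3(\lambda)=\lambda^3-\lambda\in(-1,1)$ whenever $-\lambda_3^+<\lambda<\lambda_3^+$ (one checks that $|\lambda^3-\lambda|<1$ on this symmetric interval: the value $1$ is attained only at $\pm\lambda_3^+$ by the definition of $\lambda_3^+$ and oddness, and $|\lambda^3-\lambda|$ stays below $1$ strictly inside), we get $(-\lambda_3^+,\lambda_3^+)\subset P_3^{-1}((-1,1))\subset\intt(\Sigma_\pi)$.

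There is essentially no hard obstacle here: the corollary is a bookkeeping consequence of \eqref{eq:pply} and Lemma \ref{lem:inc}, the latter of which has already absorbed all the analytic work (the induction on the monotonicity of $r_n(\theta)=P_m(2\cos\theta)$ near $\theta=\pi/m$, and the verification that $\lambda_m^-<\lambda_m^+$). The one point demanding a little care is the $m=3$ case, since Lemma \ref{lem:inc} is only stated for $m\geq 4$ and $P_3$ does not fit its pattern ($P_3(\lambda)=-1$ has no solution with $\lambda>1$, so $\lambda_3^-$ is not defined); this is why the statement of the corollary singles out $(-\lambda_3^+,\lambda_3^+)$ separately and why one reasons via oddness of $P_3$ rather than via monotonicity on a one-sided interval. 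I would present the proof in two short paragraphs mirroring the two assertions, and I would keep it to a few lines, since the substance is entirely contained in the results already proved.

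\begin{proof}
By \eqref{eq:unit} we have $(-1,1)\subset\D\subset\intt(\Sigma_\pi)$, and by Proposition \ref{prop:eqchar} and Lemma \ref{lem:cheb}, $P_m\in\cS$ for every $m\geq 2$. Hence \eqref{eq:pply} gives $P_m^{-1}((-1,1))\subset\intt(\Sigma_\pi)$ for all $m\geq 3$.

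Let $m\geq 4$. By Lemma \ref{lem:inc}, $\lambda_m^-<\lambda_m^+$ and $-1<P_m(\lambda)<1$ for $\lambda_m^-<\lambda<\lambda_m^+$; thus $(\lambda_m^-,\lambda_m^+)\subset P_m^{-1}((-1,1))\subset\intt(\Sigma_\pi)$. For the last claim, recall that $\lambda_3^+$ is the largest real solution of $P_3(\lambda)=\lambda^3-\lambda=1$, so that $-1<P_3(\lambda)=\lambda^3-\lambda<1$ for $-\lambda_3^+<\lambda<\lambda_3^+$ (using that $P_3$ is odd and that, by definition of $\lambda_3^+$, the value $\pm1$ is not attained by $P_3$ strictly inside this interval). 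Therefore $(-\lambda_3^+,\lambda_3^+)\subset P_3^{-1}((-1,1))\subset\intt(\Sigma_\pi)$.
\end{proof}
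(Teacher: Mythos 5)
Your proof is correct and follows essentially the same route as the paper: the paper derives the corollary directly from $P_m^{-1}((-1,1))\subset \intt(\Sigma_\pi)$ (a consequence of \eqref{eq:pply} and \eqref{eq:unit}), the final clause of Lemma \ref{lem:inc} for $m\geq 4$, and the direct observation that $P_3(\lambda)\in(-1,1)$ on $(-\lambda_3^+,\lambda_3^+)$. Your write-up merely makes explicit the small verification for $P_3$ (via oddness and the definition of $\lambda_3^+$) that the paper leaves to the reader.
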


By definition of $\lambda_m^+$, $\lambda_m^+\to 2$ as $m\to\infty$. Thus the above corollary and the following lemma together imply that $[0,2)\subset \intt(\Sigma_\pi)$.

\begin{lem} \label{lem:inter}
For $m=3,4,...$, $\lambda_{m+1}^-<\lambda_m^+<\lambda_{m+1}^+$.
\end{lem}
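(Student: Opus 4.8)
~\textbf{Plan of proof.}
The goal is the interlacing chain $\lambda_{m+1}^-<\lambda_m^+<\lambda_{m+1}^+$ for $m\ge 3$, where $\lambda_m^\pm$ are the largest solutions in $(0,2)$ of $P_m(\lambda)=\pm1$. The natural coordinate is $\lambda=2\cos\theta$ with the trigonometric form $P_m(2\cos\theta)=r_m(\theta)=2\cot\theta\,\sin m\theta$ from~\eqref{eq:theta}, so each inequality between $\lambda$'s becomes the reversed inequality between the corresponding $\theta$'s (since $\lambda\mapsto\theta$ is decreasing on $(0,2)\leftrightarrow(0,\pi/2)$). Write $\theta_m^\pm$ for the $\theta$-value with $2\cos\theta_m^\pm=\lambda_m^\pm$; then I must show $\theta_{m+1}^+<\theta_m^+<\theta_{m+1}^-$, i.e.\ locate all three numbers inside the small window just to the right of $\pi/m$ where Lemma~\ref{lem:inc} already guarantees $r_m$ (and $r_{m+1}$) is strictly monotone.

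First I would pin down $\theta_m^+$. By definition $\lambda_m^+\in(2\cos(\pi/m),2)$, so $\theta_m^+\in(0,\pi/m)$; and Lemma~\ref{lem:inc}'s proof shows $\lambda_m^->2\cos(\pi/m+\pi/m^2)$, hence $\lambda_m^+>\lambda_m^->2\cos(\pi/m+\pi/m^2)$ gives $\theta_m^+<\pi/m$ as well — but more importantly $r_m$ is strictly decreasing on $(0,\pi/m+\pi/m^2)$ and passes through $r_m(\pi/m)=0$ and (from the estimate in Lemma~\ref{lem:inc}) $r_m(\pi/m+\pi/m^2)<-1$, so $r_m=1$ has its largest $(0,2)$-root $\lambda_m^+$ corresponding to the \emph{unique} $\theta_m^+\in(0,\pi/m)$ with $r_m(\theta_m^+)=1$, and likewise $\theta_m^-\in(\pi/m,\pi/m+\pi/m^2)$ is the unique solution of $r_m(\theta)=-1$ there. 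So all six numbers $\theta_m^\pm,\theta_{m+1}^\pm$ lie in $(0,\pi/(m+1)+\pi/(m+1)^2)$, an interval on which \emph{both} $r_m$ and $r_{m+1}$ are strictly decreasing.

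The heart of the argument is then two sign comparisons at the points already located. For $\theta_{m+1}^+<\theta_m^+$: evaluate $r_{m+1}$ at $\theta_m^+$ and show $r_{m+1}(\theta_m^+)<1=r_{m+1}(\theta_{m+1}^+)$; by monotonicity of $r_{m+1}$ this forces $\theta_m^+>\theta_{m+1}^+$. Using the recursion $r_{m+1}(\theta)=\cos\theta\,\bigl(r_m(\theta)+2\cos m\theta\bigr)$ together with $r_m(\theta_m^+)=1$ reduces this to showing $\cos\theta_m^+\,(1+2\cos m\theta_m^+)<1$; since $\theta_m^+<\pi/m$ we have $m\theta_m^+<\pi$ and I expect a clean estimate (the factor $\cos\theta_m^+<1$ does most of the work, and one bounds $1+2\cos m\theta_m^+$ away from $3$ using a lower bound on $\theta_m^+$, e.g.\ that $r_m$ cannot reach $1$ too close to $0$ because $r_m(\theta)\approx 2/\theta$ for small $\theta$). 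For $\theta_m^+<\theta_{m+1}^-$: similarly evaluate $r_m$ at $\theta_{m+1}^-$, where $r_{m+1}(\theta_{m+1}^-)=-1$, i.e.\ $\cos\theta_{m+1}^-\,(r_m(\theta_{m+1}^-)+2\cos m\theta_{m+1}^-)=-1$, and deduce $r_m(\theta_{m+1}^-)<1$, which by monotonicity of $r_m$ gives $\theta_{m+1}^->\theta_m^+$; the inequality $r_m(\theta_{m+1}^-)<1$ should follow because $-1/\cos\theta_{m+1}^-<-1$ while $2\cos m\theta_{m+1}^->-2$, forcing $r_m(\theta_{m+1}^-)<1$ unless $\cos m\theta_{m+1}^-$ is close to $-1$, a case one handles by noting $\theta_{m+1}^-<\pi/(m+1)+\pi/(m+1)^2$ so $m\theta_{m+1}^-$ stays safely below $\pi$ for $m$ not too small, with the small cases $m=3,4$ checked by the explicit formulas $P_3(\lambda)=\lambda^3-\lambda$, $P_4(\lambda)=\lambda^4-2\lambda^2$, $P_5$.

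The main obstacle I anticipate is not the structure — that is forced by monotonicity — but getting the two elementary trigonometric inequalities to hold \emph{uniformly in $m$} with explicit constants, particularly controlling $\cos m\theta$ at the unknown roots $\theta_m^+,\theta_{m+1}^-$. The way through is to sandwich each root between explicit $\theta$-values (as in Lemma~\ref{lem:inc}), translate the desired inequality into a statement monotone in $\theta$ on that sandwich, and then verify it at the endpoints; the finitely many small $m$ left over after the uniform estimate kicks in are dispatched by direct computation with the low-degree $P_m$.
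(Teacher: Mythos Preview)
Your plan is the paper's proof: work in the $\theta$-coordinate, use the identity $r_{m+1}(\theta)=\cos\theta\,(r_m(\theta)+2\cos m\theta)$ (and its companion $r_m(\theta)=\cos\theta\,(r_{m+1}(\theta)-2\cos(m+1)\theta)$), evaluate at $\theta_m^+$ and $\theta_{m+1}^-$, and conclude by monotonicity, doing $m=3$ by hand. The one place you are vague is the lower bound on $\theta_m^+$ (note $r_m(\theta)\to 2m$, not $2/\theta$, as $\theta\to 0^+$): the paper obtains it cleanly from $r_m(\pi/m-\pi/m^2)>\sqrt2$, giving $\theta_m^+>\pi/m-\pi/m^2$, hence $m\theta_m^+>\pi/2$ and $\cos m\theta_m^+<0$, which makes $\cos\theta_m^+\,(1+2\cos m\theta_m^+)<\cos\theta_m^+<1$ immediate with no small cases beyond $m=3$.
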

\begin{proof} Since $P_3(\lambda) = \lambda^3-\lambda$ and $P_4(\lambda) = \lambda^4-2\lambda^2$, it is easy to see that $1=\lambda_4^-<\lambda_3^+ < \lambda_4^+$, so that the claimed result holds for $m=3$.

To see the result for $m\ge 4$ we will show, equivalently, that $r_{m+1}^->r_m^+>r_{m+1}^+$, for $m=4,5,...$. Here $r_n^+$, for $n\in\N$, is the smallest solution of $r_n(\theta)=1$ in $(0,\pi/n)$, so that $\lambda_n^+=2\cos r_n^+$, while $r_n^-$, for $n=4,5,...$, denotes the smallest solution of $r_n(\theta)=-1$ in $(0,\pi/2)$, so that $\lambda_n^-=2\cos r_n^-$.

We have shown in the proof of Lemma \ref{lem:inc} that, for $m\geq 3$, $r_m(\theta)$ is strictly decreasing on $(0,\pi/m+\pi/m^2)$, and that $r_m(\pi/m+\pi/m^2) < -1$, for $m\ge 5$, while $r_m(\pi/m)=0$ so that
$$
\frac{\pi}{m} < r_m^- < \frac{\pi}{m} + \frac{\pi}{m^2}, \quad \mbox{for } m\ge 5.
$$
Similarly, for $m\ge 2$,
\begin{eqnarray*}
r_m(\pi/m-\pi/m^2) &=& 2\cos(\pi/m-\pi/m^2)\sin(\pi/m)/\sin(\pi/m-\pi/m^2)\\
&>& 2\cos(\pi/2-\pi/4)=\sqrt{2} > 1,
\end{eqnarray*}
so that
$$
\frac{\pi}{m} - \frac{\pi}{m^2}< r_m^+ < \frac{\pi}{m}, \quad \mbox{for } m\ge 2.
$$
These inequalities imply that, for $m\ge 4$, $r_{m+1}^-\in (0,\pi/m+\pi/m^2)$, and since $r_m$ is strictly decreasing on this interval and
\begin{eqnarray*}
r_m(r_{m+1}^-) &=& 2\cot r_{m+1}^- \sin((m+1)r_{m+1}^- - r_{m+1}^-)\\
&  =& \cos r_{m+1}^-(-1-2\cos((m+1)r_{m+1}^-)) < \cos r_{m+1}^- < 1,
\end{eqnarray*}
it follows that $r_m^+<r_{m+1}^- <\pi/(m+1)+\pi/(m+1)^2$. Since also $r_{m+1}$ is strictly decreasing on $(0,\pi/(m+1)+\pi/(m+1)^2)$ and
$$
r_{m+1}(r_m^+) = 2\cot r_m^+ \sin(mr_m^+ + r_m^+) = \cos r_m^+ (1+2\cos(mr_m^+)) < \cos r_m^+<1,
$$
since $\pi/2<mr_m^+<\pi$, we see that also $r_m^+>r_{m+1}^+$.
\end{proof}

\begin{cor} \label{cor:main}
$(-2,2)\cup \ri (-2,2)\subset \intt(\Sigma_\pi)$.
\end{cor}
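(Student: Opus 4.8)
The plan is to combine Corollary~\ref{cor:intpts}, Lemma~\ref{lem:inter}, and the symmetry Lemma~\ref{lem:symeasy}. First I would observe that the open intervals furnished by Corollary~\ref{cor:intpts}, namely $(-\lambda_3^+,\lambda_3^+)$ and $(\lambda_m^-,\lambda_m^+)$ for $m\ge4$, all lie in $\intt(\Sigma_\pi)$, and that by Lemma~\ref{lem:inter} they chain together without gaps: $\lambda_4^-=1<\lambda_3^+$ (as $\lambda_3^+\in(2\cos(\pi/3),2)=(1,2)$) and $\lambda_{m+1}^-<\lambda_m^+$ for $m\ge4$. Hence their union is the single open interval $(-\lambda_3^+,\sup_{m\ge3}\lambda_m^+)$. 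Since $(\lambda_m^+)_{m\ge3}$ is strictly increasing (Lemma~\ref{lem:inter}) and $2\cos(\pi/m)<\lambda_m^+<2$, it follows that $\sup_{m\ge3}\lambda_m^+=2$, so $(-\lambda_3^+,2)\subset\intt(\Sigma_\pi)$; in particular $[0,2)\subset\intt(\Sigma_\pi)$.

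Next I would invoke Lemma~\ref{lem:symeasy}: $\Sigma_\pi$, and hence also $\intt(\Sigma_\pi)$ (each generator being a homeomorphism of $\C$, and homeomorphisms preserve interiors), is invariant under the dihedral group $D_2$ generated by $\lambda\mapsto\ri\lambda$ and $\lambda\mapsto\overline\lambda$. As $\lambda\mapsto-\lambda$ lies in $D_2$ (it is $\lambda\mapsto\ri\lambda$ applied twice), $[0,2)\subset\intt(\Sigma_\pi)$ gives $(-2,0]\subset\intt(\Sigma_\pi)$, whence $(-2,2)\subset\intt(\Sigma_\pi)$; applying $\lambda\mapsto\ri\lambda$ once more yields $\ri(-2,2)\subset\intt(\Sigma_\pi)$, which is the assertion.

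I do not expect a genuine obstacle here: the substance is entirely contained in the already-established Corollary~\ref{cor:intpts} and Lemma~\ref{lem:inter}. The only thing to treat with a little care is the bookkeeping in the first paragraph — that the listed intervals really do overlap consecutively, leaving no gap, and that their right endpoints accumulate exactly at $2$ — and this is immediate from $\lambda_4^-=1<\lambda_3^+$, $\lambda_{m+1}^-<\lambda_m^+$, and $2\cos(\pi/m)<\lambda_m^+<2$.
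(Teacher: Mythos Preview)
Your proposal is correct and follows essentially the same approach as the paper's own proof: chain together the intervals from Corollary~\ref{cor:intpts} using the interlacing of Lemma~\ref{lem:inter} to obtain $[0,2)\subset\intt(\Sigma_\pi)$, then apply the $D_2$ symmetry from Lemma~\ref{lem:symeasy}. The paper's write-up is slightly terser (it phrases the chain as $[0,\lambda_3^+)\cup\bigcup_{m\ge3}[\lambda_m^+,\lambda_{m+1}^+)$), but the substance is identical.
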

\begin{proof} From Corollary \ref{cor:intpts} and Lemma \ref{lem:inter} it follows that $[0,\lambda_3^+)\subset \intt(\Sigma_\pi)$ and that $[\lambda_m^+,\lambda_{m+1}^+)\subset \intt(\Sigma_\pi)$, for $m\geq 3$. Thus $[0,\lambda_m^+)\subset \intt(\Sigma_\pi)$ for $m\geq 3$, and so $[0,2)\subset \intt(\Sigma_\pi)$ since $\lambda_m^+\to 2$ as $m\to\infty$. Applying Lemma \ref{lem:symeasy} we obtain the stated result.
\end{proof}

The following lemma follows immediately from Corollary \ref{cor:main}, \eqref{eq:per}, and \eqref{eq:pply}.

\begin{lem} \label{lem:pinint}
Suppose that $k\in \cK_n$, so that $k$ has length $n$ and $p_k\in \cS$ has degree $n$. Then all except at most $2n$ points in $\spec A_k^{\per}$ are interior points of $\Sigma_\pi$. Further, if $\lambda\in \spec A_k^{\per}$ then there exists a sequence $(\lambda_m)\subset \spec A_k^{\per}\cap \intt(\Sigma_\pi)$ such that $\lambda_m\to \lambda$ as $m\to\infty$.
\end{lem}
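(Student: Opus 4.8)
The plan is to deduce everything from the three facts cited in the lemma, doing the work through the polynomial $p_k$. First I would split into cases according to whether $k$ is even or odd, using \eqref{eq:per}: in the even case $\spec A_k^{\per} = p_k^{-1}([-2,2])$, and in the odd case $\spec A_k^{\per} = p_k^{-1}(\ri[-2,2])$. I will treat the even case in detail; the odd case is word for word the same after replacing the segment $[-2,2]$ by $\ri[-2,2]$ and its endpoints $\pm 2$ by $\pm 2\ri$.

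For the first assertion I would note that Corollary \ref{cor:main} gives $(-2,2)\subset \intt(\Sigma_\pi)$, and since $p_k\in \cS$, \eqref{eq:pply} then yields $p_k^{-1}((-2,2)) \subset p_k^{-1}(\intt(\Sigma_\pi)) \subset \intt(\Sigma_\pi)$. Hence every $\lambda\in \spec A_k^{\per}$ with $p_k(\lambda)\in (-2,2)$ is an interior point of $\Sigma_\pi$, so the only possible non-interior points of $\spec A_k^{\per}$ lie in $p_k^{-1}(\{-2\})\cup p_k^{-1}(\{2\})$. Since $p_k$ is monic of degree $n$, each of $p_k(\lambda)-2$ and $p_k(\lambda)+2$ is a polynomial of degree $n$, hence has at most $n$ zeros, and the two zero sets are disjoint; thus $p_k^{-1}(\{-2,2\})$ has at most $2n$ elements, and all but at most $2n$ points of $\spec A_k^{\per}$ are interior points of $\Sigma_\pi$.

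For the second assertion I would show $p_k^{-1}([-2,2]) \subset \overline{p_k^{-1}((-2,2))}$, which suffices because $p_k^{-1}((-2,2))\subset \spec A_k^{\per}\cap \intt(\Sigma_\pi)$ by the previous paragraph. Given $\lambda\in \spec A_k^{\per}$: if $p_k(\lambda)\in (-2,2)$ the constant sequence works; if $p_k(\lambda)\in\{-2,2\}$ I would use that $p_k$ is a non-constant polynomial, hence an open map, so that for any neighbourhood $N$ of $\lambda$ the set $p_k(N)$ is a neighbourhood of the endpoint $p_k(\lambda)$ of the interval $(-2,2)$, and every neighbourhood of an endpoint of $(-2,2)$ meets $(-2,2)$; therefore $N$ meets $p_k^{-1}((-2,2))$. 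Applying this to a sequence of shrinking neighbourhoods of $\lambda$ produces the required sequence $(\lambda_m)\subset \spec A_k^{\per}\cap \intt(\Sigma_\pi)$ with $\lambda_m\to\lambda$.

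I do not expect any genuine obstacle here: the content is entirely in Corollary \ref{cor:main}, \eqref{eq:per}, and \eqref{eq:pply}, together with the two elementary facts that a degree-$n$ polynomial has at most $n$ preimages of a point and that non-constant polynomials are open maps. The only point needing a moment's care is checking, in the odd case, that $\pm 2\ri$ are still limit points of $\ri(-2,2)\subset \intt(\Sigma_\pi)$ — which is immediate — so that the openness argument transfers verbatim.
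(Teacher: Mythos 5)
Your proposal is correct and is exactly the argument the paper intends: the paper gives no written proof, stating only that the lemma "follows immediately from Corollary \ref{cor:main}, \eqref{eq:per}, and \eqref{eq:pply}", and your write-up supplies precisely the missing details (the degree count for $p_k^{-1}(\{\pm 2\})$ and the open-mapping argument at the endpoints). No gaps.
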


As an example of the above lemma, suppose that $k=(-1,1)\in \cK_2$. Then (see Table \ref{table1}) $p_k(\lambda)=\lambda^2$ and, from \eqref{eq:per}, $\spec A_k^{\per} = \{x\pm \ri x:-1\leq x\leq 1\}$. There are precisely four points, $\pm 1\pm \ri\in \spec A_k^{\per}\setminus \intt(\Sigma_\pi)$. These are not interior points of $\Sigma_\pi$ since they lie on the boundary of $\overline{\Delta}\supset \Sigma\supset \Sigma_\pi$.

Combining the above lemma with Theorem \ref{thm:dense}, we obtain the last result of this section.

\begin{thm} \label{thm:interior}
$\Sigma_\pi$ is the closure of its interior.
\end{thm}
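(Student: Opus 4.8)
The plan is to show that $\Sigma_\pi = \overline{\intt(\Sigma_\pi)}$ by establishing both inclusions, where the nontrivial one is $\Sigma_\pi \subset \overline{\intt(\Sigma_\pi)}$. Since $\Sigma_\pi$ is closed, it suffices to show that $\intt(\Sigma_\pi)$ is dense in $\Sigma_\pi$. By Theorem~\ref{thm:dense}, $\pi_\infty^\cS$ is dense in $\Sigma_\pi$, so it is enough to show that every point of $\pi_\infty^\cS$ lies in $\overline{\intt(\Sigma_\pi)}$. But $\pi_\infty^\cS = \bigcup_{k\in\cK}\spec A_k^{\per}$, so it suffices to show: for each $k\in\cK_n$ and each $\lambda\in\spec A_k^{\per}$, there is a sequence in $\intt(\Sigma_\pi)$ converging to $\lambda$.

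This last statement is precisely the second assertion of Lemma~\ref{lem:pinint}. Indeed, Lemma~\ref{lem:pinint} tells us that, given $k\in\cK_n$ and $\lambda\in\spec A_k^{\per}$, there is a sequence $(\lambda_m)\subset \spec A_k^{\per}\cap\intt(\Sigma_\pi)$ with $\lambda_m\to\lambda$. Such a sequence lies in $\intt(\Sigma_\pi)$, so $\lambda\in\overline{\intt(\Sigma_\pi)}$. Hence every point of $\pi_\infty^\cS$ lies in $\overline{\intt(\Sigma_\pi)}$, and therefore, since $\pi_\infty^\cS$ is dense in $\Sigma_\pi$ and $\overline{\intt(\Sigma_\pi)}$ is closed, we conclude $\Sigma_\pi = \overline{\pi_\infty^\cS}\subset\overline{\intt(\Sigma_\pi)}$.

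The reverse inclusion $\overline{\intt(\Sigma_\pi)}\subset\Sigma_\pi$ is immediate: $\intt(\Sigma_\pi)\subset\Sigma_\pi$ and $\Sigma_\pi$ is closed (being the closure $\overline{\pi_\infty}$). Combining the two inclusions gives $\Sigma_\pi = \overline{\intt(\Sigma_\pi)}$.

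There is essentially no obstacle here, since the substantive work has already been done: the hard input is Theorem~\ref{thm:dense} (density of $\pi_\infty^\cS$ in $\Sigma_\pi$, which in turn rests on Proposition~\ref{prop:inc} and the density of $\sigma_\infty$ in $\pi_\infty$), together with Lemma~\ref{lem:pinint} (which itself relies on Corollary~\ref{cor:main}, $(-2,2)\cup\ri(-2,2)\subset\intt(\Sigma_\pi)$, and the inverse-mapping property \eqref{eq:pply}). The proof of Theorem~\ref{thm:interior} is then just the assembly of these pieces: a density argument relative to the closed set $\overline{\intt(\Sigma_\pi)}$. The only point worth stating carefully is that one needs the \emph{approximation} clause of Lemma~\ref{lem:pinint}, not merely the ``all but finitely many'' clause, because the finitely many exceptional points in $\spec A_k^{\per}$ must still be shown to be limits of interior points --- which is exactly what that clause provides.
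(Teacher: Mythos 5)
Your proof is correct and follows essentially the same route as the paper's: both combine Theorem~\ref{thm:dense} (density of $\pi_\infty^{\cS}$ in $\Sigma_\pi$) with the approximation clause of Lemma~\ref{lem:pinint} to conclude that interior points are dense in $\Sigma_\pi$. The paper phrases this as a single diagonal sequence $(\mu_n)$ converging to a given $\lambda\in\Sigma_\pi$, while you phrase it as the closed set $\overline{\intt(\Sigma_\pi)}$ containing the dense subset $\pi_\infty^{\cS}$; these are the same argument.
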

\begin{proof}
Suppose $\lambda \in \Sigma_\pi$. Then, by Theorem \ref{thm:dense}, $\lambda$ is the limit of a sequence $(\lambda_n)\subset \pi^\cS_\infty$, and, by Lemma \ref{lem:pinint}, for each $n$ there exists $\mu_n\in \intt(\Sigma_\pi)$ such that $|\mu_n-\lambda_n| < n^{-1}$, so that $\mu_n\to \lambda$ as $n\to\infty$.
\end{proof}

\section{Filled Julia sets in $\Sigma_\pi$} \label{sec:julia}

It was shown in \cite{Hagger:symmetries} that, for every polynomial symmetry $p\in \cS$, the corresponding Julia set $J(p)$ satisfies $J(p)\subset \overline{U(p)}\subset \Sigma_\pi$, where $U(p)$ is defined by \eqref{eq:Up}. (The argument in \cite{Hagger:symmetries} is that $J(p) \subset \overline{U(p)}$ by \eqref{eq:Jpi}, and that $\overline{U(p)}\subset \Sigma_\pi$ by \eqref{eq:pply}.) It was conjectured in \cite{Hagger:symmetries} that also the filled Julia set $K(p)\subset \overline{U(p)}\subset \Sigma_\pi$, for every $p\in \cS$.
In this section we will first show by a counterexample that this conjecture is false; we will exhibit a $p\in \cS$ of degree 18 for which $K(p)\not\subset \overline{U(p)}$. However, we have no reason to doubt a modified conjecture, that $K(p)\subset \Sigma_\pi$, for all $p\in \cS$. And the main result of this section will be to prove that $K(p)\subset \Sigma_\pi$ for a large class of $p\in \cS$, including $p=P_m$, for $m\geq 2$.

Our first result is the claimed counterexample.

\begin{lem} \label{lem:counter} Let $k = (1,-1,1,1,1,-1,1,-1,-1,1,-1,1,1,1,-1,1,-1,1)$, so that (by Proposition \ref{prop:eqchar}) $p_k\in \cS$, this polynomial given explicitly by
$$
p_k(\lambda) = \lambda^{18} - 4\lambda^{16} + 5\lambda^{14} - 4\lambda^{12} + 7\lambda^{10} - 8\lambda^8 + 6\lambda^6 -
4\lambda^4 + \lambda^2.
$$
Then $K(p_k)\not\subset \overline{U(p_k)}$.
\end{lem}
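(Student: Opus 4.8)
\textbf{Proof plan for Lemma \ref{lem:counter}.}

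The plan is to exhibit a point $\lambda_0$ that lies in the filled Julia set $K(p_k)$ but not in $\overline{U(p_k)}$, and the natural candidate is a point inside a bounded Fatou component of $F(p_k)$ that is \emph{not} encountered by the orbit machinery generating $U(p_k)$. Recall $U(p_k) = \bigcup_{n\geq 1} p_k^{-n}(\D)$, and $\overline{U(p_k)}$ is a closed, completely-invariant-under-$p_k^{-1}$ set contained in $\Sigma_\pi$. The key structural observation is that $\overline{U(p_k)} \supset J(p_k)$ but need \emph{not} contain all the bounded components of the Fatou set: $\lambda \in U(p_k)$ iff some forward iterate $p_k^n(\lambda)$ lands in $\D$, so $\overline{U(p_k)}$ misses any bounded Fatou component whose entire forward orbit stays outside $\D$ (and whose boundary, lying in $J(p_k) \subset \overline{U(p_k)}$, is approached only from points already in $J(p_k)$). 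Concretely, by Corollary \ref{cor:small}, since $k$ has even length $n=18$, $p_k(\lambda) = O(\lambda^2)$ near $0$ — in fact $p_k(\lambda) = \lambda^2 + O(\lambda^4)$ from the explicit formula — so $0$ is a superattracting fixed point, and it has its own immediate basin $B_0$, a bounded Fatou component.

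First I would locate the attracting cycle responsible for the ``bad'' component. The cleanest route is numerical but rigorous-in-principle: compute the critical points of $p_k$ (the roots of $p_k'$, a degree-$17$ polynomial), and follow the forward orbit of each critical point under $p_k$. By Proposition \ref{prop:crit} (Fatou's theorem), every attracting or parabolic cycle attracts a critical orbit, so the attracting cycles of $p_k$ are found among the limit sets of the $17$ critical orbits. One such critical point is $0$ itself (since $p_k(\lambda)=\lambda q_{k(1:16)}(\lambda)$ with $q_{k(1:16)}(0)=0$ by Lemma \ref{lem:qkbound}, $n=16$ even, so actually $p_k(\lambda) = \lambda\cdot(\lambda\cdot(\text{odd poly}))$... one checks $p_k'(0)=0$), attracted to the fixed point $0$; I expect at least one \emph{other} critical orbit to converge to a genuine periodic cycle $C = \{z_1,\dots,z_r\}$ with $r\geq 2$, none of whose points lies in $\overline{\D}$. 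The associated periodic Fatou components $U_1,\dots,U_r$ are then bounded components of $F(p_k)$ with $\bigcup_j \overline{U_j} \cap \overline{\D} = \emptyset$ (this needs checking: that the closures stay outside $\D$, which follows once one controls the size of the components, e.g.\ via the fact that all of $K(p_k) \subset \Sigma_\pi \subset N_2$, keeping everything in a known compact region, together with explicit bounds on $|p_k^m(z)|$ for $z$ near the cycle).

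Next I would verify $\lambda_0 \notin \overline{U(p_k)}$ for a chosen $\lambda_0$ in such a component $U_1$. Since $U(p_k) = \bigcup_n p_k^{-n}(\D)$ is open, $\overline{U(p_k)} = J(p_k) \cup (U(p_k) \cap F_B(p_k))$-type set; more usefully, $\lambda_0 \in \overline{U(p_k)}$ would force, by openness of $U(p_k)$ and the local structure, that either $\lambda_0 \in U(p_k)$ (impossible, as the forward orbit of $\lambda_0$ converges to $C$ which avoids $\overline{\D}$, so no iterate enters $\D$) or $\lambda_0 \in J(p_k)$ (impossible, as $\lambda_0$ is interior to a Fatou component). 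Making the first alternative rigorous is where the real work sits: I must show the forward orbit $(p_k^m(\lambda_0))$ stays at distance $\geq \delta > 1$... no — stays outside $\overline{\D}$, i.e.\ $|p_k^m(\lambda_0)| > 1$ for all $m$. Picking $\lambda_0$ very close to a cycle point $z_1$ with $|z_1|$ comfortably larger than $1$, one uses the contraction estimate $|p_k^r(z) - z_1| \leq \kappa |z - z_1|$ with $\kappa < 1$ on a small disk, iterated, to trap the whole orbit in a small neighbourhood of $C$ uniformly bounded away from $\overline{\D}$; the transient finitely-many steps before entering that neighbourhood are checked by hand (interval arithmetic). That $\lambda_0 \in K(p_k)$ is automatic since its orbit is bounded (it converges to $C$).

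The main obstacle is the verification that an attracting cycle $C$ disjoint from $\overline{\D}$ actually exists for this specific degree-$18$ polynomial and that its immediate basin is a genuine bounded Fatou component bounded away from $\D$ — this is an explicit, finite, but delicate numerical-cum-rigorous computation (finding roots of $p_k'$, iterating, and bounding the multiplier $|(p_k^r)'(z_1)| < 1$ with certified arithmetic). Once the cycle is pinned down with rigorous bounds, the rest — choosing $\lambda_0$, bounding its orbit away from $\overline{\D}$, concluding $\lambda_0 \in K(p_k) \setminus \overline{U(p_k)}$ — is routine. A cleaner alternative, if the full Fatou-theoretic argument is awkward, is to note that any bounded Fatou component $V$ with $p_k^m(V) \cap \D = \emptyset$ for all $m$ satisfies $V \cap U(p_k) = \emptyset$ directly from the definition of $U(p_k)$, so it suffices to produce \emph{one} point whose entire (bounded) forward orbit avoids $\D$ and which is not in $J(p_k)$; I would present the argument in this streamlined form.
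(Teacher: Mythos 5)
Your overall strategy is the paper's: find an attracting periodic orbit of $p_k$ lying outside $\ovD$, and observe that a small forward-invariant open neighbourhood $N$ of it is contained in $K(p_k)$ (orbits starting in $N$ stay in $N$, hence are bounded) while being disjoint from $U(p_k)=\bigcup_{n\ge1} p_k^{-n}(\D)$ (no iterate of a point of $N$ ever meets $\D$), whence, $N$ being open, $N\cap\overline{U(p_k)}=\emptyset$. Your ``streamlined form'' at the end is exactly this and is sound. (The earlier dichotomy ``$\lambda_0\in\overline{U(p_k)}$ forces $\lambda_0\in U(p_k)$ or $\lambda_0\in J(p_k)$'' is shakier, since $\partial U(p)\subset J(p)$ is not obvious; but you do not need it.)

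The genuine gap is that you never establish the one fact that carries the whole lemma: that this particular $p_k$ actually possesses an attracting cycle outside $\ovD$. You write that you ``expect'' some critical orbit to converge to a cycle $C=\{z_1,\dots,z_r\}$, $r\ge 2$, avoiding $\ovD$, and you yourself flag this as ``the main obstacle''. That expectation is the entire content of the counterexample --- for a generic $p\in\cS$ it is false (the remark after the lemma notes only one other such instance among all $p\in\cS$ of degree $\le 20$) --- so without certifying it there is no proof. The paper closes this by exhibiting an attracting \emph{fixed} point (so $r=1$, not $r\ge2$): by the intermediate value theorem $p_k(\lambda)=\lambda$ has a root $\lambda^*\in(1.215,1.216)$, and exact-arithmetic bounds $|p_k'(1.2155)|\le 0.71$ and $|p_k''(\lambda)|<400$ on $[1.215,1.216]$ give $|p_k'(\lambda^*)|\le 0.91<1$. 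Working with a fixed point also removes your worries about transient iterates and about controlling the size of the periodic Fatou components: for all small $\epsilon>0$ the disk $N=\lambda^*+\epsilon\D$ satisfies $p_k(N)\subset N$ and $N\cap\ovD=\emptyset$ outright, and the conclusion follows. So your plan is the right one, but it needs the certified computation actually carried out, and the object it produces is a period-one cycle.
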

\begin{proof} Let $p=p_k$. If we can find a $\mu\not\in \ovD$ that is an attracting fixed point of $p$, then, for all sufficiently small $\epsilon>0$, $N := \mu + \epsilon \D$ satisfies $p(N)\subset N$ and $N\cap \ovD = \emptyset$, so that $N\subset K(p)$ and $N\cap \overline{U(p)}=\emptyset$. Calculating in double-precision floating-point arithmetic in Matlab we see that $\lambda\approx 1.21544069$ appears to be a fixed point of $p$, with
$$
p^\prime(\lambda) = 18\lambda^{17} - 64\lambda^{15} + 70\lambda^{13} - 48\lambda^{11} + 70\lambda^{9} - 64\lambda^7 + 36\lambda^5 -
16\lambda^3 + 2\lambda \approx -0.69,
$$
so that this fixed point appears to be attracting. To put this on a rigorous footing we work in exact arithmetic to deduce,  by the intermediate value theorem, that $p(\lambda)=\lambda$ has a solution $\lambda^*\in(1.215,1.216)$, and that $|p^\prime(1.2155)|\leq 0.71$. Then, noting that
$p^{\prime\prime}(\lambda)= p_+(\lambda)-p_-(\lambda)$, where
$p_+(\lambda) = 306\lambda^{16} + 910\lambda^{12}  + 630\lambda^8 + 180\lambda^4 + 2$ and $p_-(\lambda) = 960\lambda^{14} + 528\lambda^{10} + 448\lambda^6 + 48\lambda^2$,
we see that
\[|p^{\prime\prime}(\lambda)|\leq \max\{|p_-(1.216)-p_+(1.215)|,|p_-(1.215)-p_+(1.216)|\} < 400\]
for $1.215\leq \lambda\leq 1.216$. But this implies that
$|p^\prime(\lambda^*)| \leq |p^\prime(1.2155)| + 0.0005 \times 400 \leq 0.91$, so that $\lambda^*$ is an attracting fixed point.
\end{proof}
Numerical results suggest that amongst the polynomials $p\in \cS$ of degree $\leq 20$, there is only one other similar counterexample of a polynomial with an attracting fixed point outside the unit disk, the other example of degree 19.

We turn now to positive results. Part of our argument will be to show, for every $p\in \cS$, that $\{z:|z|\geq 2\}\subset A_p(\infty)$, via the following lower bounds that follow immediately from Lemma \ref{lem:qkbound}, \eqref{eq:pk2} and \eqref{p_k}.

\begin{cor} \label{cor:pkbound}
If $k\in \{\pm 1\}^n$, for some $n\in \N$, then $|p_k(\lambda)|\geq 2$, for
$|\lambda|\geq 2$. If $p_k\in \cS$, then $|p_k(\lambda)|\geq 2n$, for
$|\lambda|\geq 2$.
\end{cor}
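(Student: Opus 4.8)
The plan is to read both inequalities off the determinant expansions \eqref{eq:pk2} and \eqref{p_k}, estimating the factors by the elementary magnitude bounds already recorded in Lemma~\ref{lem:qkbound}; that is exactly why the statement is phrased as a corollary.

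For the general bound $|p_k(\lambda)|\geq 2$, I would fix $k\in\{\pm1\}^n$ and $|\lambda|\geq 2$ and start from \eqref{eq:pk2}, $p_k(\lambda)=q_{k(1:n-1)}(\lambda)-k_n q_{k(2:n-2)}(\lambda)$, so that $|p_k(\lambda)|\geq|q_{k(1:n-1)}(\lambda)|-|q_{k(2:n-2)}(\lambda)|$. The key observation is that the sub-vector $k(2:n-2)$ is obtained from $k(1:n-1)$ by deleting the first entry and then the last entry, and each deletion strictly decreases the absolute value by at least $1$ when $|\lambda|\geq 2$: applying $|q_\ell(\lambda)|\geq|q_{\ell(2:m)}(\lambda)|+1$ of Lemma~\ref{lem:qkbound} to $\ell=k(1:n-1)$ (length $m=n-1$) and then $|q_\ell(\lambda)|\geq|q_{\ell(1:m-1)}(\lambda)|+1$ to $\ell=k(2:n-1)$ (length $m=n-2$) gives $|q_{k(1:n-1)}(\lambda)|\geq|q_{k(2:n-2)}(\lambda)|+2$, whence $|p_k(\lambda)|\geq 2$. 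The degenerate cases $n=1$ and $n=2$, where some sub-vectors collapse and the relevant $q$'s are read off from the conventions \eqref{eq:ij}, I would check by hand: there $p_k(\lambda)=\lambda$ and $p_k(\lambda)=\lambda^2-k_1-k_2$ respectively, and in each case $|p_k(\lambda)|\geq 2$ is immediate for $|\lambda|\geq 2$.

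For the sharper bound when $p_k\in\cS$, I would use that $k\in\cK$ by Proposition~\ref{prop:eqchar}, so $n\geq 2$ and $p_k(\lambda)=\lambda\, q_{k(1:n-2)}(\lambda)$ by \eqref{p_k}. For $n\geq 3$ the vector $k(1:n-2)$ has length $n-2\geq 1$, so the last estimate of Lemma~\ref{lem:qkbound} gives $|q_{k(1:n-2)}(\lambda)|\geq(n-2)+2=n$ for $|\lambda|\geq 2$; for $n=2$ one has $q_{k(1:0)}(\lambda)=\lambda$ directly. Multiplying by $|\lambda|\geq 2$ then gives $|p_k(\lambda)|=|\lambda|\,|q_{k(1:n-2)}(\lambda)|\geq 2n$.

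I do not expect any genuine difficulty here: the only thing that requires care is the index bookkeeping in the $q_{k(i:j)}$ notation --- checking that ``delete the first entry'' and ``delete the last entry'' correspond precisely to the two inequalities of Lemma~\ref{lem:qkbound}, and that the degenerate endpoints governed by \eqref{eq:ij} are handled consistently. Once that is in place, both inequalities follow in a couple of lines.
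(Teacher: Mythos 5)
Your argument is correct and is precisely the one the paper intends --- the corollary is stated there without proof as an immediate consequence of Lemma \ref{lem:qkbound}, \eqref{eq:pk2} and \eqref{p_k}, and your two-step ``delete first entry, then last entry'' chain for the first bound and the count $|q_{k(1:n-2)}(\lambda)|\geq n$ for the second are exactly how those ingredients combine. One small slip to repair: $p_k\in\cS$ does not force $k\in\cK$ (e.g.\ $p_{(1,-1)}(\lambda)=\lambda^2\in\cS$ while $(1,-1)\notin\cK$), so you should instead choose $\ell\in\cK$ with $p_\ell=p_k$ --- necessarily of the same length $n$, since $p_k$ is monic of degree $n$ --- and apply \eqref{p_k} and Lemma \ref{lem:qkbound} to $\ell$; nothing else in your argument changes.
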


\begin{cor} \label{cor:pkjul}
Let $p=p_k$, where $k\in \{\pm 1\}^n$, for some $n\in \N$. Then $A_p(\infty) \supset \{z\in \C:|z|>2\}$. If $p\in \cS$, then $A_p(\infty)\supset \{z\in \C:|z|\geq 2\}$.
\end{cor}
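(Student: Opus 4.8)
The plan is to reduce Corollary \ref{cor:pkjul} to a quantitative refinement of Corollary \ref{cor:pkbound}. First I would show that, for $k\in\{\pm1\}^n$ and $|\lambda|\ge 2$, one has $|p_k(\lambda)|\ge 2(|\lambda|-1)$, and moreover $|p_k(\lambda)|\ge n|\lambda|\ge 2|\lambda|$ when $p_k\in\cS$. Both bounds come from the same computation that proves Corollary \ref{cor:pkbound}, simply retaining one extra factor of $|\lambda|$. When $p_k\in\cS$ this is immediate from \eqref{p_k}: since $q_{k(1:n-2)}$ is the monic polynomial attached to a vector of length $n-2$, Lemma \ref{lem:qkbound} gives $|q_{k(1:n-2)}(\lambda)|\ge n$ for $|\lambda|\ge 2$, hence $|p_k(\lambda)|=|\lambda|\,|q_{k(1:n-2)}(\lambda)|\ge n|\lambda|$. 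For general $k$ I would start from \eqref{eq:pk2}, $p_k(\lambda)=q_{k(1:n-1)}(\lambda)-k_n q_{k(2:n-2)}(\lambda)$, expand $q_{k(1:n-1)}$ by its first row using \eqref{eq:qk}, and apply Lemma \ref{lem:qkbound} twice to get $|q_{k(1:n-1)}(\lambda)|\ge(|\lambda|-1)|q_{k(2:n-2)}(\lambda)|+|\lambda|$; combined with $|q_{k(2:n-2)}(\lambda)|\ge 1$ for $|\lambda|\ge 2$ (again Lemma \ref{lem:qkbound}, or the conventions \eqref{eq:ij} when $n\le 3$), this yields $|p_k(\lambda)|\ge(|\lambda|-2)|q_{k(2:n-2)}(\lambda)|+|\lambda|\ge 2(|\lambda|-1)$.

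Granted these estimates, both inclusions follow by a short iteration. For $p=p_k\in\cS$: if $|z|\ge 2$ then the whole orbit stays in $\{|\lambda|\ge 2\}$ and $|p^{j}(z)|\ge 2|p^{j-1}(z)|$ for every $j\ge 1$, so $|p^{j}(z)|\ge 2^{j}|z|\to\infty$ and $z\in A_p(\infty)$; thus $A_p(\infty)\supset\{|z|\ge 2\}$. For general $p=p_k$: if $|z|>2$ then $|p(z)|\ge 2(|z|-1)>2$, so inductively $|p^{j}(z)|>2$ for all $j$, and $|p^{j}(z)|-2\ge 2(|p^{j-1}(z)|-2)$ gives $|p^{j}(z)|-2\ge 2^{j}(|z|-2)\to\infty$; hence $z\in A_p(\infty)$ and $A_p(\infty)\supset\{|z|>2\}$.

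The only point needing care — and the closest thing to an obstacle — is the bookkeeping in the first paragraph when $n=2$ or $n=3$: there the shortened index sets $k(2:n-1)$, $k(3:n-1)$ and $k(2:n-2)$ are governed by the conventions \eqref{eq:ij} rather than by Lemma \ref{lem:qkbound}, so the two inequalities used to build the bound on $q_{k(1:n-1)}$ must be checked directly in those cases; each reduces to an elementary polynomial inequality valid for $|\lambda|\ge 2$. (The degenerate case $n=1$, where $p_k(\lambda)=\lambda$ has degree $1$ and there is no basin of infinity, is tacitly excluded; every $p_k\in\cS$ has degree $n\ge 2$, and the first assertion is only ever invoked for polynomials of degree $\ge 2$.) Everything else is immediate.
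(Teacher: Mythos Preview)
Your argument is correct and takes a genuinely different, more elementary route than the paper. The paper relies only on the weaker bound of Corollary~\ref{cor:pkbound} (namely $|p_k(\lambda)|\ge 2$, and $|p_k(\lambda)|\ge 2n$ when $p_k\in\cS$): from $|p_k(w)|\ge 2$ on a neighbourhood of any $z$ with $|z|>2$ it invokes Montel's theorem to conclude that the family $\{p_k^n\}$ is normal there, hence $z\notin J(p)$ by \eqref{eq:nn}, and then uses the topology of the Fatou set (the connected unbounded set $\{|z|>2\}$, being disjoint from $J(p)$, must lie in $A_p(\infty)$); the case $|z|=2$ for $p\in\cS$ is then handled by a single application of $|p(z)|\ge 2n\ge 4$ to push $z$ into $\{|z|>2\}$. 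Your approach instead sharpens the polynomial estimate to $|p_k(\lambda)|\ge 2(|\lambda|-1)$ (and $|p_k(\lambda)|\ge n|\lambda|$ for $p_k\in\cS$) and iterates directly to show $|p^j(z)|\to\infty$, using nothing beyond the definition of $A_p(\infty)$. The trade-off is that the paper spends less on the inequality and more on complex-dynamics machinery, while you spend a few extra lines refining the inequality (the chain $|q_{k(1:n-1)}|\ge(|\lambda|-1)|q_{k(2:n-1)}|+1\ge(|\lambda|-1)|q_{k(2:n-2)}|+|\lambda|$ via Lemma~\ref{lem:qkbound}) in exchange for an argument that is entirely self-contained. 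Your caveat about the degenerate case $n=1$ is well taken; the paper's standing assumption that the complex-dynamics apparatus applies only to polynomials of degree $\ge 2$ handles this implicitly.
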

\begin{proof}
Let $z\in \C$ with $|z|>2$. Then, by Corollary \ref{cor:pkbound}, for some neighbourhood $N$ of $z$, $|p_k(w)|\geq 2$ for $w\in N$. Thus, and by Montel's theorem \cite[Theorem 14.5]{Falconer03}, the family $\{p^n:n\in\N\}$ is normal at $z$. So $z\not\in J(p)$, by \eqref{eq:nn}. We have shown that $J(p)\cap \{z:|z|>2\}=\emptyset$, so that also $K(p)\cap \{z:|z|>2\}=\emptyset$ and $A_p(\infty)=\C\setminus K(p) \supset \{z:|z|> 2\}$.

If $p\in \cS$ and $|z|=2$ then, by Corollary \ref{cor:pkbound}, $|p(z)|\geq 4$ so that $p(z)\in A_p(\infty)$ and so $z\in A_p(\infty)$. Thus $A_p(\infty)\supset \{z:|z|\geq 2\}$.
\end{proof}

We remark that the bounds in Corollary \ref{cor:pkbound} appear to be sharp. In particular, if $k=(1,1,...,1)$ has length $n\geq 2$, we see from \eqref{eq:pdef2}, \eqref{p_k}, and Lemma \ref{lem:cheb} that $p_k(2)=U_n(1)-U_{n-2}(1)=2$, since $U_m(1)=m+1$ \cite{AS}. And we note that, if $p=P_m$, for some $m\in \N$, then $p(2)=P_m(2)=2U_{m-1}(1)=2m$. Finally, we recall that we have already noted that, for $p=p_k$, with $k=(1,1)$, i.e., $p(z)=z^2-2$, the Julia set is $J(p)=[-2,2]$, so that $A_p(\infty)\not\supset \{z:|z|\geq 2\}$ for this $p$.

The polynomial $p(z)=z^2-2$ is an example where $J(p)=K(p)$ so $F_B(p)=K(p)\setminus J(p) =\emptyset$. The next lemma tells us that this does not happen, that $K(p)$ is strictly larger than $J(p)$, if $p\in\cS$.

\begin{lem}\label{lem:larger}
$F_B(p)\cap U(p)$ is non-empty for $p\in \cS$.
\end{lem}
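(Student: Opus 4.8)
The plan is to exploit that $0$ is a fixed point of every $p\in\cS$ with $|p'(0)|\le 1$, and to locate a piece of $F_B(p)$ that accumulates at $0$, combined with the elementary observation that a whole neighbourhood of $0$ lies in $U(p)$. The algebraic input is immediate: by \eqref{p_k} we have $p(0)=0$ for every $p=p_k\in\cS$, and by Corollary~\ref{cor:small} either $p'(0)=0$ (when $n$ is even) or $p'(0)=\pm1$ (when $n$ is odd), so $|p'(0)|\le1$ in all cases. Moreover, since $p(0)=0\in\D$ and $\D$ is open with $p$ continuous, there is $\delta>0$ with $\{|z|<\delta\}\subset p^{-1}(\D)\subset U(p)$; hence any subset of $F_B(p)$ meeting every neighbourhood of $0$ will furnish a point of $F_B(p)\cap U(p)$.

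If $p'(0)=0$, then $0$ is a super-attracting fixed point: on a small disk $N$ about $0$ one has $p^n\to0$ uniformly, so $N\subset F(p)$ and $N\subset K(p)$ (orbits in $N$ converge to $0$, hence are bounded), whence $0\in\intt(K(p))=F_B(p)$; since also $0\in U(p)$ we are done. If instead $p'(0)=\pm1$, then $0$ is a rationally neutral fixed point of $p$ with multiplier $1$ (if $p'(0)=1$), or of $g:=p^2$ with multiplier $(p'(0))^2=1$ (if $p'(0)=-1$). In either subcase the relevant map is not the identity — its degree is $n\ge3$ (respectively $n^2\ge9$) — so the Leau--Fatou flower theorem \cite{CarlesonGamelin92} provides at least one attracting petal $\Pi$ at $0$: an open set with $0\in\partial\Pi$, contained in $F(p)$ (respectively in $F(g)=F(p)$, using $J(p)=J(p^2)$), on which the iterates converge to $0$. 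Points of $\Pi$ have orbits under $p$ converging to $0$ — directly when $p'(0)=1$, and when $p'(0)=-1$ because $p^{2n}(z)\to0$ forces $p^{2n+1}(z)=p(p^{2n}(z))\to p(0)=0$ — so these orbits are bounded and do not tend to $\infty$; hence $\Pi\subset F(p)\setminus A_p(\infty)=F_B(p)$. Finally, since $0\in\partial\Pi$, the petal meets $\{|z|<\delta\}\subset U(p)$, so $\emptyset\ne\Pi\cap\{|z|<\delta\}\subset F_B(p)\cap U(p)$.

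The super-attracting case is a routine normal-family argument, and the bulk of the work in the neutral case is just a careful citation of the standard flower theorem. The one point that needs genuine attention is the reduction to multiplier $1$ by passing to $p^2$ when $p'(0)=-1$, and then checking that the petal produced for $p^2$ is genuinely contained in $F_B(p)$ (not merely $F_B(p^2)$) and still accumulates at $0$; this is handled above via the characterisation $F_B(p)=F(p)\setminus A_p(\infty)$ and the identity $J(p)=J(p^2)$. I do not expect any serious obstacle beyond assembling these complex-dynamics facts cleanly.
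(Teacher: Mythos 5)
Your proposal is correct and follows essentially the same route as the paper: $p(0)=0$ with $p'(0)=0$ in the even case (super-attracting fixed point, so a neighbourhood of $0$ lies in $F_B(p)\cap U(p)$) and $p'(0)=\pm1$ in the odd case, where the attracting petals of the rationally neutral fixed point (Leau--Fatou, i.e.\ \cite[Section II.5]{CarlesonGamelin92}) supply a region of $F_B(p)$ accumulating at $0$, hence meeting $p^{-1}(\D)\subset U(p)$. Your treatment is somewhat more careful than the paper's on the $p'(0)=-1$ subcase and on why the petal lands in $U(p)$, but the underlying argument is the same.
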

\begin{proof}
If $p\in \cS$ is even then, by Lemma \ref{lem:det} and Corollary \ref{cor:small}, $p(0)=p^\prime(0)=0$, so that $0$ is an attracting fixed point. Clearly $A_p(0)$ (which is non-empty) is a subset of $U(p)\cap F_B(p)$. Similarly, by Lemma \ref{lem:det} and Corollary \ref{cor:small}, if $p$ is odd then $p(0)=0$ and $p^\prime(0)=\pm 1$, so that $0\in J(p)$ is a rationally neutral fixed point and has a (non-empty) attracting region contained in $F_B(p)$ \cite[Section II.5]{CarlesonGamelin92}, this region clearly also in $U(p)$.
\end{proof}

The above lemma and \eqref{eq:pply} imply that $F_B(p)\cap \Sigma_\pi\supset F_B(p)\cap U(p)$ is non-empty for all $p\in \cS$, in particular that $A_p(0)\subset F_B(p)\cap U(p) \subset \Sigma_\pi$ if $p$ is even. The main result of this section is the following criterion for the {\em whole} of $F_B(p)$ to be contained in $\Sigma_\pi$.

\begin{thm} \label{prop:filled3}
Suppose that $p\in \cS$, and that the critical points of $p$ in $K(p)$ have orbits that lie eventually in $1.1\ovD\cup (-2,2)\cup \ri(-2,2)$.  Then $K(p)\subset \Sigma_\pi$.
\end{thm}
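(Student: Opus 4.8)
The plan is to derive the result from Proposition \ref{prop:cd2a}. Concretely, I will produce a bounded, open, simply-connected set $S\subset\intt(\Sigma_\pi)$ and a closed set $T\subset S$ with the property that the orbit of every critical point of $p$ lying in $K(p)$ is eventually contained in $T$. Once this is done, Proposition \ref{prop:cd2a} yields $K(p)\subset G:=\overline{\bigcup_{n=1}^\infty p^{-n}(S)}$. Since $p\in\cS$, iterating the inclusion $p^{-1}(\intt(\Sigma_\pi))\subset\intt(\Sigma_\pi)$ from \eqref{eq:pply} gives $p^{-n}(S)\subset\intt(\Sigma_\pi)\subset\Sigma_\pi$ for every $n$, so $G\subset\overline{\Sigma_\pi}=\Sigma_\pi$ (as $\Sigma_\pi$ is closed), and hence $K(p)\subset\Sigma_\pi$. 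Thus the whole argument reduces to constructing $S$ and $T$.

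For $T$: by hypothesis the orbit of any critical point $w$ of $p$ with $w\in K(p)$ eventually lies in $1.1\ovD\cup(-2,2)\cup\ri(-2,2)$, and, since $K(p)=\C\setminus A_p(\infty)$ is completely invariant with $p(K(p))=K(p)$, this orbit also stays in $K(p)$. As $p\in\cS$, Corollary \ref{cor:pkjul} gives $K(p)\subset 2\D$, and compactness of $K(p)$ provides an $r_0\in(1.1,2)$ with $K(p)\subset r_0\D$. Therefore the tail of the orbit lies in $K(p)\cap\big(1.1\ovD\cup(-2,2)\cup\ri(-2,2)\big)\subset T:=1.1\ovD\cup[-r_0,r_0]\cup\ri[-r_0,r_0]$, which is compact. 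Moreover $T\subset\intt(\Sigma_\pi)$: we have $1.1\ovD\subset\intt(\Sigma_\pi)$ by Proposition \ref{prop:betterbounds}, while $[-r_0,r_0]\cup\ri[-r_0,r_0]\subset(-2,2)\cup\ri(-2,2)\subset\intt(\Sigma_\pi)$ by Corollary \ref{cor:main}.

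The main work — and the step I expect to be the obstacle — is to thicken the ``tree'' $T$ to a bounded, open, \emph{simply-connected} $S$ with $T\subset S\subset\intt(\Sigma_\pi)$; boundedness is free since $\Sigma_\pi\subset\overline{\Delta}$, but simple-connectedness must be argued by hand because $\intt(\Sigma_\pi)$ is not known to be simply connected. I would use the explicit region $W$ of Proposition \ref{prop:betterbounds}: $W$ is open (its sectorial part $\{r\re^{\ri\theta}:0\le r<\rho(\theta)\}$, with $\rho\ge 1$ the continuous radial profile assembled from $W_1$, $W_2$ and their rotations, is open, and the eight disks $\re^{\pm\ri\pi/6}+\eta\D$ and their rotations are open), $W\subset\intt(\Sigma_\pi)$, and $W$ is simply connected — the sectorial part is star-shaped about $0$, and the disks are convex, pairwise disjoint, and each meets the sectorial part in a connected set. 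Since $[-r_0,r_0]$ and $\ri[-r_0,r_0]$ are compact subsets of the open set $\intt(\Sigma_\pi)$, we may pick $\delta>0$ so small that the open $\delta$-neighbourhoods $V_1$ of $[-r_0,r_0]$ and $V_2=\ri V_1$ of $\ri[-r_0,r_0]$ (convex ``stadium'' regions) lie in $\intt(\Sigma_\pi)$, and then set $S:=W\cup V_1\cup V_2$. This $S$ is open, bounded, contained in $\intt(\Sigma_\pi)$, and contains $1.1\ovD\cup[-r_0,r_0]\cup\ri[-r_0,r_0]=T$. It is simply connected because $W$ is simply connected, $V_1$ and $V_2$ are convex, and — using that near the real (resp.\ imaginary) axis $W$ reduces to its sectorial part, which meets $V_1$ (resp.\ $V_2$) in a single connected slab, and that $V_1$ and $V_2$ overlap only near the origin, inside $W$ — the intersections $W\cap V_1$ and $(W\cup V_1)\cap V_2$ are connected, so the union is simply connected. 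With $S$ and $T$ so chosen, Proposition \ref{prop:cd2a} applies and completes the proof.
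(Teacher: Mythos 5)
Your proposal is correct and follows essentially the same route as the paper: choose a compact $T$ capturing the tails of the critical orbits (the paper takes $T=[a,b]\cup\ri[a,b]\cup 1.1\ovD$ with $K(p)\cap\R\subset[a,b]$, exactly your $r_0$ device), thicken it to a simply-connected open $S\subset\Sigma_\pi$ using Corollary \ref{cor:main} and Proposition \ref{prop:betterbounds}, and conclude via Proposition \ref{prop:cd2a} and \eqref{eq:pply}. The only difference is that you spell out an explicit construction of $S$ where the paper merely asserts its existence; your extra care there (and in noting that the orbits remain in $K(p)$, so that the open hypothesis set can be replaced by the compact $T$) is sound.
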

\begin{proof} Choose $a$ and $b$ with $-2<a<b<2$ such that $K(p)\cap \R \subset [a,b]$ and $K(p)\cap \ri\R \subset \ri[a,b]$, this possible by Corollary \ref{cor:pkjul} which says that the closed set $K(p)\subset \{z:|z|<2\}$. Set $T = [a,b]\cup\ri[a,b]\cup 1.1\ovD$, and choose a simply-connected open set $S$ such that $T\subset S\subset \Sigma_\pi$, this possible by Corollary \ref{cor:main} and Proposition \ref{prop:betterbounds}. By hypothesis, the orbits of the critical points in $K(p)$ lie eventually in $T$. Thus the lemma follows from Proposition \ref{prop:cd2a} and \eqref{eq:pply}.
\end{proof}

\begin{figure}
\begin{center}
\includegraphics[scale = 0.7, trim = 0 1cm 0 0]{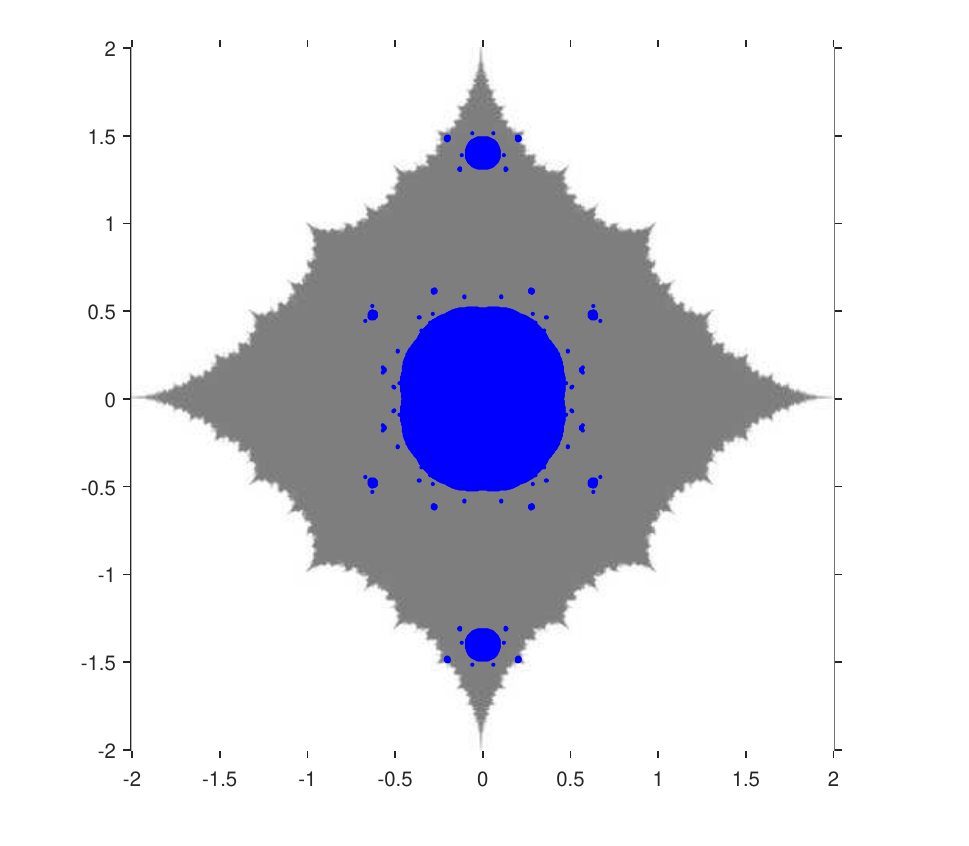}

\caption{Plot of $\pi_{30}\subset \Sigma_\pi$ and (in blue) the filled Julia set $K(p)$ in the case $p(\lambda) = P_4^*(\lambda) = \lambda^4 + 2\lambda^2$. By Corollary \ref{cor:Kp}, $K(p)\subset \Sigma_\pi$.} \label{Figure3}
\end{center}
\end{figure}

As an example of application of this theorem, consider $p\in \cS$ given by (see Table \ref{table1})
$p(\lambda) = P_4^*(\lambda)=\lambda^4+ 2\lambda^2$. This $p$ has critical points $0$ and $\pm \ri$. Since $p^2(\pm \ri) = 3$ it follows from Corollary \ref{cor:pkjul} that $\pm \ri\in A_p(\infty)$, while $0$ is a fixed point. Theorem \ref{prop:filled3} tells us that $K(p)$, visualised in Figure \ref{Figure3}, is contained in $\Sigma_\pi$.
We note that, since all the critical points of $p$ except the fixed point $0$ are in $A_p(\infty)$, $K(p)$ is not connected \cite[Theorem III 4.1]{CarlesonGamelin92} and, by Theorem \ref{thm:sullivan} and Proposition \ref{prop:crit}, $F_B(p)=A_p(0)$, which implies that $K(p)\subset \overline{U(p)}$. Further, recalling the discussion in Section \ref{sec:pre}, $J(p) =\partial K(p)=\partial A_p(0)=\partial A_p(\infty)$, and, since $K(p)$ has more than one component, $F_B(p)$ has infinitely many components \cite[Theorem IV 1.2]{CarlesonGamelin92}.

 The above example is a particular instance of a more general result. It is straightforward to see that if $p$ is a polynomial with zeros only on the real line, then all the critical points are also on the real line.  Since, by Lemma \ref{lem:cheb}, $P_m(\lambda) = \lambda U_{m-1}(\lambda/2)$, and all the zeros of the polynomial $U_{m-1}$ are real, it follows that all the zeros of $P_m$ are real, so all its critical points are also real, and so the orbits of all the critical points are real. Further, by Corollary \ref{cor:pkjul}, the orbits of the critical points in $K(p)$ stay in $(-2,2)$. Likewise, as (see \eqref{eq:PM*}) $P_m^*(\lambda)=\ri^{-m}P_m(\ri\lambda)$, all the critical points of $P_m^*$ lie on $\ri\R$, and so the orbits of these critical points are real if $m$ is even, pure imaginary if $m$ is odd. Further, by Corollary \ref{cor:pkjul}, the orbits of the critical points in $K(p)$ stay in $(-2,2)\cup \ri(-2,2)$. Applying Theorem \ref{prop:filled3} we obtain:

\begin{cor} \label{cor:Kp} $K(P_m)\subset \Sigma_\pi$ and $K(P_m^*)\subset \Sigma_\pi$, for $m\geq 2$.
\end{cor}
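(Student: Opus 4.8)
The plan is to verify, for $p=P_m$ and then for $p=P_m^*$, the single hypothesis of Theorem~\ref{prop:filled3}: that every critical point of $p$ lying in $K(p)$ has orbit eventually contained in $1.1\ovD\cup(-2,2)\cup\ri(-2,2)$. Since $P_m\in\cS$ by Proposition~\ref{prop:eqchar}, and $P_m^*\in\cS$ as recorded around \eqref{eq:PM*}, for all $m\geq 2$, Theorem~\ref{prop:filled3} then delivers both inclusions $K(P_m)\subset\Sigma_\pi$ and $K(P_m^*)\subset\Sigma_\pi$ at once.

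For $P_m$, I would first note via Lemma~\ref{lem:cheb} that $P_m(\lambda)=\lambda\,U_{m-1}(\lambda/2)$ is a degree-$m$ polynomial with real coefficients, and that its zeros --- namely $0$ together with the $m-1$ zeros $2\cos(j\pi/m)$, $j=1,\dots,m-1$, of $\lambda\mapsto U_{m-1}(\lambda/2)$ --- are all real. By Rolle's theorem (counting multiplicities), $P_m'$ then also has all $m-1$ of its zeros real, so every critical point of $P_m$ is real. Because $P_m$ has real coefficients, $P_m(\R)\subset\R$, and hence the orbit of any critical point stays on $\R$. If such a critical point $w$ lies in $K(P_m)$, then by definition of the filled Julia set its orbit is bounded; combined with Corollary~\ref{cor:pkjul}, which gives $A_{P_m}(\infty)\supset\{z:|z|\geq 2\}$, this forces the whole orbit to remain in $(-2,2)$. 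The hypothesis of Theorem~\ref{prop:filled3} is thus met (with the relevant orbits in fact always, not just eventually, in $(-2,2)$), and $K(P_m)\subset\Sigma_\pi$ follows.

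For $P_m^*$, I would use $P_m^*(\lambda)=\ri^{-m}P_m(\ri\lambda)=\ri^{1-m}\lambda\,U_{m-1}(\ri\lambda/2)$ from \eqref{eq:PM*}. Differentiating gives $(P_m^*)'(\lambda)=\ri^{1-m}P_m'(\ri\lambda)$, so $w$ is a critical point of $P_m^*$ exactly when $\ri w$ is a critical point of $P_m$, i.e.\ exactly when $\ri w\in\R$, i.e.\ exactly when $w\in\ri\R$; hence every critical point of $P_m^*$ is purely imaginary. Next, from the explicit formula together with the fact that $U_{m-1}$ has real coefficients and parity $(-1)^{m-1}$, one checks the two elementary facts that $P_m^*$ maps $\R$ into $\R$, and maps $\ri\R$ into $\R$ if $m$ is even and into $\ri\R$ if $m$ is odd. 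Consequently the orbit of any critical point of $P_m^*$ lies in $\R\cup\ri\R$ (it is eventually real for $m$ even, purely imaginary for $m$ odd); and if that critical point lies in $K(P_m^*)$ its orbit is bounded, so by Corollary~\ref{cor:pkjul} it is confined to $(-2,2)\cup\ri(-2,2)$. Theorem~\ref{prop:filled3} applies once more, giving $K(P_m^*)\subset\Sigma_\pi$.

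Once Theorem~\ref{prop:filled3} is in hand this is essentially a bookkeeping exercise, and I anticipate no genuine obstacle; the only step requiring mild care is confirming that iteration of $P_m^*$ keeps the critical orbits inside $\R\cup\ri\R$, which comes down to the two parity/coefficient observations above, both immediate from $P_m^*(\lambda)=\ri^{1-m}\lambda\,U_{m-1}(\ri\lambda/2)$.
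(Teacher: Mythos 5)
Your argument is correct and is essentially the paper's own proof: both establish that the zeros of $P_m$ (hence, by Rolle/Gauss--Lucas, its critical points) are real, that the critical points of $P_m^*$ lie on $\ri\R$ with orbits confined to $\R\cup\ri\R$ by the parity of $m$, and then combine boundedness of orbits in the filled Julia set with Corollary \ref{cor:pkjul} to trap those orbits in $(-2,2)\cup\ri(-2,2)$ before invoking Theorem \ref{prop:filled3}. The only difference is presentational: you verify the invariance of $\R$ and $\ri\R$ under $P_m^*$ slightly more explicitly via the formula $P_m^*(\lambda)=\ri^{1-m}\lambda U_{m-1}(\ri\lambda/2)$, which the paper leaves implicit.
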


Numerical experiments carried out for the polynomials $p\in \cS$ of degree $\leq 7$ (see Table \ref{table1} and \cite[Table 1]{Hagger:symmetries}) appear to confirm that these polynomials satisfy the conditions of Theorem \ref{prop:filled3}, i.e., it appears for each polynomial $p$ that the orbit of every critical point either diverges to infinity or is eventually in $1.1\ovD\cup (-2,2)\cup \ri(-2,2)$. The same appears true for the polynomial $p\in \cS$ of degree 18 in Lemma \ref{lem:counter} for which $K(p)\not\subset \overline{U(p)}$. Thus it appears, from numerical evidence and Theorem \ref{prop:filled3}, that $K(p)\subset \Sigma_\pi$ for these examples. These numerical experiments and Corollary \ref{cor:Kp} motivate a conjecture that $K(p)\subset \Sigma_\pi$ for all $p\in \cS$.

\section{Open Problems} \label{sec:open}
We finish this paper with a note of open problems regarding the spectrum of the Feinberg-Zee random hopping matrix, particularly problems that the above discussions have highlighted. We recall first that \cite{CCL} made several conjectures regarding $\Sigma$. It was proved in \cite{Hagger:dense} that $\overline{\sigma_\infty}=\Sigma_\pi$, but the following conjectures remain open:
\begin{enumerate}
  \item $\Sigma_\pi=\Sigma$;
  \item $\Sigma$ is the closure of its interior;
  \item $\Sigma$ is simply connected;
  \item $\Sigma$ has a fractal boundary.
\end{enumerate}
Of these conjectures, perhaps the first has the larger implications. Certainly, if $\Sigma=\Sigma_\pi$, then we have noted below \eqref{eq:convPin} that we have constructed already convergent sequences of upper ($\Sigma_n^*$) and lower ($\Pi_n$) bounds for $\Sigma$ that can both be computed by calculating eigenvalues of $n\times n$ matrices. Further, if $\Sigma=\Sigma_\pi$, then the second of the above conjectures follows from Theorem \ref{thm:interior}.

The last three conjectures in the above list were prompted in large part by plots of $\pi_{n}$ in \cite{CCL}, the plot of $\pi_{30}$ reproduced in Figures \ref{Figure2} and \ref{Figure3}. It is plausible that these plots, in view of \eqref{eq:convPin}, approximate $\Sigma_\pi$. We see no clear route to establishing the third conjecture above. Regarding the fourth, we note that the existence of the set $\cS$ of polynomial symmetries satisfying \eqref{eq:spsym} suggests a self-similar structure to $\pi_\infty$ and to $\Sigma_\pi$ and $\Sigma$  and their boundaries. Further, \cite{Hagger:symmetries} has shown that $\Sigma_\pi$ contains the Julia sets of all polynomials in $\cS$, and Proposition \ref{prop:filled3} and Corollary \ref{cor:Kp} show that $\Sigma_\pi$ contains the filled Julia sets, many of which have fractal boundaries, of the polynomials in an infinite subset of $\cS$.

Regarding these polynomial symmetries we make two further conjectures:
\begin{enumerate}
  \item[5.] $K(p)\subset \Sigma_\pi$ for all $p\in \cS$;
  \item[6.] $p^{-1}(\Sigma)\subset \Sigma$ for all $p\in \cS$.
\end{enumerate}
This last conjecture follows if $\Sigma=\Sigma_\pi$, by Theorem \ref{thm:cS} from \cite{Hagger:symmetries}. Further (see the discussion below \eqref{eq:pply}), it was shown in \cite{CWDavies2011} that $p^{-1}(\Sigma)\subset \Sigma$ for the only polynomial of degree 2 in $\cS$, $p(\lambda)=\lambda^2$.

The major subject of study and tool for argument in this paper has been Hagger's set of polynomial symmetries $\cS$. We finish with one final open question raised immediately before Section \ref{subsec:a}.
\begin{enumerate}
  \item[7.]  Does $\cS$ capture all the polynomial symmetries of $\Sigma$? Precisely, are there polynomial symmetries, satisfying \eqref{eq:spsym}, that are not either in $\cS$ or compositions of elements of $\cS$?
\end{enumerate}






\subsection*{Acknowledgments}
We thank our friend and scientific collaborator Marko Lindner for introducing us to the study of, and for many discussions about, this beautiful matrix class.
\end{document}